\newcommand{\be}{\begin{eqnarray}}
\newcommand{\ee}{\end{eqnarray}}
\newcommand{\ce}{\begin{eqnarray*}}
\newcommand{\de}{\end{eqnarray*}}
\newtheorem{theorem}{Theorem}[section]
\newtheorem{lemma}[theorem]{Lemma}
\newtheorem{remark}[theorem]{Remark}
\newtheorem{definition}[theorem]{Definition}
\newtheorem{proposition}[theorem]{Proposition}
\newtheorem{Examples}[theorem]{Examples}
\newtheorem{corollary}[theorem]{Corollary}
\def\[{{\Big[}}
\def\]{{\Big]}}
\def\<{{\langle}}
\def\>{{\rangle}}
\def\({{\Big(}}
\def\){{\Big)}}
\def\bx{{\mathbf{x}}}
\def\bt{\begin{theorem}}
\def\et{\end{theorem}}
\def\bl{\begin{lemma}}
\def\el{\end{lemma}}
\def\br{\begin{remark}}
\def\er{\end{remark}}
\def\bx{\begin{Examples}}
\def\ex{\end{Examples}}
\def\bd{\begin{definition}}
\def\ed{\end{definition}}
\def\bp{\begin{proposition}}
\def\ep{\end{proposition}}
\def\bc{\begin{corollary}}
\def\ec{\end{corollary}}
\def\mR{{\mathbb R}}
\def\geq{\geqslant}
\begin{document}
\title{Localization of Wiener Functionals of Fractional Regularity and Applications}
\author{Kai He$^{1}$, Jiagang Ren$^{2}$, Hua Zhang$^{3}$}
\subjclass{}
\date{}
\dedicatory{$^{1}$Institute of Applied Mathematics, Academy of Mathematics\\
and System Sciences, Chinese Academy of Sciences, \\
Beijing 100190, P.R.China\\
$^{2}$School of Mathematics and Computational Science,
Sun Yat-Sen University,\\
Guangzhou, Guangdong 510275, P.R.China\\
$^{3}$School of Statistics, Jiangxi University of Finance and Economics,\\
Nanchang, Jiangxi 330013, P.R.China\\
Emails: K. He: hekai0@amss.ac.cn\\
J. Ren: renjg@mail.sysu.edu.cn\\
H. Zhang: zh860801@163.com}
\keywords{Fractional order, Donsker's delta function, Integration by parts,  Non-Markovian, Euler scheme, Convergence rate}
\thanks{This work is supported by NSF of China (No. 11171358), Doctor Fund of Ministry of Education (no. 20100171110038) and the Key Laboratory of Random Complex Structures and Data,
Academy of Mathematics and Systems Science, Chinese Academy of Sciences.}

\begin{abstract}
In this paper we localize some of Watanabe's results on Wiener functionals of fractional regularity, and use them to give a precise estimate of the difference between two Donsker's delta functionals even with fractional differentiability. As an application, the convergence rate of the density of the Euler scheme for non-Markovian stochastic differential equations is obtained.
\end{abstract}

\maketitle

\section{Introduction}
During the last years the analysis on Wiener functionals with fractional smoothness in the sense of Malliavin calculus has drawn increasing attention. There are two ways to define fractional order Sobolev spaces as intermediate spaces between the Sobolev spaces with integer differential index. One way is
the complex interpolation method. It makes use of fractional powers of the Ornstein-Uhlenbeck operator, and we denote these fractional order Sobolev spaces by ${\mathbb D}_{\alpha}^p$, $p>1$, $\alpha\in\mR$. The spaces ${\mathbb D}_{\alpha}^p$ are natural and typical ones which corresponds to Bessel potential spaces in classical analysis. However, it is not easy to see that the space ${\mathbb D}_{\alpha}^p$ with $0<\alpha<1$ is invariant under the composition with Lipschitz functions. To circumvent this difficulty, Watanabe \cite{W} introduced real interpolation fractional order Sobolev spaces on Wiener spaces by using the trace method. Then an equivalent method, the $K$-method, was used in Airault, Malliavin and Ren \cite{AMR} to study the
smoothness of stopping times of diffusion processes. The advantage of the $K$-method is that it describes explicitly how well one can approximate a fractionally smooth Wiener functional by a sequence of smooth functionals. For the equivalence between the trace method and the $K$-method, we refer to \cite[Chap. 1]{T}. Also it should be mentioned that later Hirsch \cite{H} proved that the complex interpolation fractional order Sobolev spaces are in fact invariant under the composition with Lipschitz functions, too.

The aims of the present paper are, roughly speaking, to study local versions of some of the results of \cite{W} and to investigate their applications in the Euler scheme of non-Markovian stochastic differential equations. In particular, we establish a precise estimate of the difference between two Donsker's delta functionals in terms of fractional order Sobolev norms, and, as a consequence, we then dominate the difference between two conditional expectations in the sense of H\"{o}lder norms.

The Euler scheme is an useful tool in the numerical simulation of solutions of stochastic differential equations, and has theoretical value as well. Let $(X(\cdot))$ be the unique solution to
\begin{eqnarray}\label{SDE1}
X(t)=x+\int_0^tb(s,X(s))ds+\int_0^t\sigma(s,X(s))dW(s),
\end{eqnarray}
where $b$, $\sigma$ are respectively Lipschtiz continuous mapping from ${\mathbb R}^d$ to ${\mathbb R}^d$ and ${\mathbb R}^d\otimes{\mathbb R}^m$, and $(W(\cdot))$ is an $m$-dimensional Brownian motion. Let $T>0$ be a fixed time horizon, and $T/n$ represent the discretization step. Set $X_n(0)=x$, and for $kT/n<t\leqslant(k+1)T/n$, the Euler scheme is defined by
\begin{eqnarray*}
X_n(t)=X_n(\frac{kT}{n})+b(\frac{kT}{n},X_n(\frac{kT}{n}))(t-\frac{kT}{n})+\sigma(\frac{kT}{n},X_n(\frac{kT}{n}))
(W(t)-W(\frac{kT}{n})).
\end{eqnarray*}
There are two kinds of weak approximations. The first one concerns
\begin{eqnarray*}
\xi_1(x,T,n):=E[f(X(T))]-E[f(X_n(T))],
\end{eqnarray*}
where $f$ is a suitable class of test function. The second one is the approximation of the density $p_{X(T)}$ of the law of $X(T)$, i.e.,
\begin{eqnarray*}
\xi_2(x,T,n):=p_{X(T)}-p_{X_n(T)}.
\end{eqnarray*}

When studying these two kinds of quantities,  people's interest focuses on the convergence rate or an error expansion of $\xi_1(x,T,n)$ and $\xi_2(x,T,n)$ in terms of $T/n$, due to the fact that analysis of these two kinds of quantities turns out to be more important for applications, for instance in finance and biology, etc.  There have been a lot of progresses in this area. Suppose that the test function $f$ and the coefficients $b$ and $\sigma$ are sufficiently smooth and $f$ has polynomial growth. Without any additional assumption on the generator, Talay and Tubaro \cite{TT} derive an error expansion of order $1$ for $\xi_1(x,T,n)$. Bally and Talay \cite{BT1} also obtain the same kind of result for bounded Borel functions $f$ under the hypoellipticity assumption on the coefficients. These authors also extend their results to $\xi_2(x,T,n)$ for a slightly modified Euler scheme in \cite{BT2}. It is also worth noting that Kohatsu-Higa and Pettersson in \cite{KP} introduce another way to prove weak error expansion of $\xi_1(x,T,n)$ and $\xi_2(x,T,n)$ which is based on the integration by parts formula of the Malliavin calculus. On the other hand, by using the fractional calculus in the Malliavin calculus, Watanabe and the second named author of the present paper \cite{RW} obtained the convergence of $\xi_2(x,T,n)$ in fractional order Sobolev spaces. All these works are confined in the context of Markovian SDEs.

Establishing the estimate of the difference between two Donsker's delta functionals enables us to  study the Euler scheme of non-Markovian stochastic differential equations. In other words, we allow the coefficients in stochastic differential equations to look into the past. More precisely, we consider the solution to the equation of the form:
\begin{eqnarray}\label{SDE2}
X(t)=x+\int_0^tb(s,X(\cdot))ds+\int_0^t\sigma(s,X(\cdot))dW(s),
\end{eqnarray}
where $\sigma:[0,\infty]\times C([0,\infty];{\mathbb R}^d)\rightarrow{\mathbb R}^d\otimes{\mathbb R}^m$ and $b:[0,\infty]\times C([0,\infty];{\mathbb R}^d)\rightarrow{\mathbb R}^d$.

Let us describe our ideas explicitly as follows. Since the coefficients $(\sigma,b)$ depend not only on the present values of the solution processes, but also on its previous values too, the analysis of convergence rate or error expansion of $\xi_1(x,T,n)$ and $\xi_2(x,T,n)$ for SDE (\ref{SDE2}) is quite different to that for SDE (\ref{SDE1}). In particular, it seems difficult to extend the results in \cite{BT1} \cite{BT2} to stochastic differential equations which are not Markovian SDEs since the approaches used there rely heavily on the Feynman-Kac partial differential equations associated with SDE (\ref{SDE1}). Here unlike the approaches in \cite{BT1} \cite{BT2}, by using the fact that the heat kernel can be given by the generalized expectation of Donsker's delta functionals, we will establish the convergence rate of $\xi_2(x,T,n)$ for SDE (\ref{SDE2}). Of course, since the coefficients in (\ref{SDE2}) may depend on the past trajectories $\{X_s,0\leqslant s\leqslant t\}$ of the solution, we should modify the Euler scheme for non-Markovian stochastic differential equations which will be carried out in Section 5. In order to derive a convergence rate of $\xi_2(x,T,n)$, the strong approximation of the Euler scheme for non-Markovian stochastic differential equations in Sobolev spaces of appropriate orders in the Malliavin calculus sense is needed.

The paper is organized as follows. In Section 2, we recall some results of the Malliavin calculus that we will use in the sequel. In Section 3, we study some properties of the fractional order Sobolev spaces under local assumptions. In Section 4, we give a precise estimate of the difference between two Donsker's functionals. In Section 5, we are devoted to the proof of the convergence rate of $\xi_2(x,T,n)$.

\section{Recalls on the Malliavin calculus}
We first recall briefly some basic ingredients in the Malliavin calculus and the reader is referred, e.g., to \cite{HY, PM, N} for more details. Let $(B,\mathbb H,\mu)$ be an abstract Wiener space. We will denote the gradient operator (or Shigekawa's $\mathbb H$-derivative) by $D$, its dual divergence operator (or the Skorohod operator) by $D^*$ and the Ornstein-Uhlenbeck operator by $L:=-D^* D$. Let $E$ be a real separable Hilbert space. The Sobolev spaces ${\mathbb D}_{\alpha}^p(E)$, $1<p<\infty$, $\alpha\in{\mathbb R}$, of $E$-valued Wiener and generalized Wiener functionals are defined by
\begin{eqnarray*}
{\mathbb D}_{\alpha}^{p}(E)=(1-L)^{-\alpha/2}(\mathbb{L}^p(E))
\end{eqnarray*}
with the norm
\begin{eqnarray*}
\|F\|_{\alpha,p}=\|(1-L)^{\alpha/2}F\|_{\mathbb{L}^p(E)},
\end{eqnarray*}
where $\mathbb{L}^p(E)$ is the usual $\mathbb{L}^p(E)$-space. We denote by $\mathbb{L}^{\infty-}(E)=\bigcap_{1<p<\infty}\mathbb{L}^p(E)$,
${\mathbb D}_{\alpha}^{\infty-}(E)=\bigcap_{1<p<\infty}{\mathbb D}_{\alpha}^p(E)$, and ${\mathbb D}_{\infty}^{\infty-}(E)=\bigcap_{\alpha>0}\bigcap_{1<p<\infty}{\mathbb D}_{\alpha}^p(E)$. If $E=\mathbb{R}$, we simply write $\mathbb{L}^p$, $\mathbb{D}_{\alpha}^p$, $\mathbb{L}^{\infty-}$, ${\mathbb D}_{\alpha}^{\infty-}$ and ${\mathbb D}_{\infty}^{\infty-}$.

We denote by $C_p^{\infty}({\mathbb R}^d)$  the set of all infinitely continuously differential functions $f:{\mathbb R}^d\rightarrow{\mathbb R}$ such that $f$ and all of its partial derivatives have polynomial growth, and we also denote by $C_b^{\infty}({\mathbb R}^d)$  the set of all infinitely continuously differential functions $f:{\mathbb R}^d\rightarrow{\mathbb R}$ such that $f$ and all of its partial derivatives are bounded. Let
$C^\beta({\mathbb R}^d)$, $\beta\geqslant 0$, be the Banach space of $[\beta]$-times continuously differentiable functions
on ${\mathbb R}^d$ whose $[\beta]$-th derivatives are uniformly $\{\beta\}$-H\"{o}lder continuous
with the norm ($[\beta]$ and $\{\beta\}=\beta-[\beta]$ denote the integer and fractional part of $\beta$ respectively)
\begin{eqnarray*}
\|f\|_{C^\beta({\mathbb R}^d)}=\sum_{n;|n|\leqslant[\beta]}
|\partial^n f|_{\infty}+\sum_{n;|n|=[\beta]}\sup_{x\neq y}|\partial^n f(x)
-\partial^n f(y)|/|x-y|^{\{\beta\}},
\end{eqnarray*}
with the notation $n=(n_1,\cdots,n_d)$, $|n|=\sum_{i=1}^dn_i$ and
$\partial^n=\partial_1^{n_1}\cdots\partial_d^{n_d}$, where $\partial_i=\partial/\partial x_i$.
Note that $(1-\Delta)^{-\beta/2}(C^0({\mathbb R}^d))\subset C^\beta({\mathbb R}^d)$ (see \cite{S}).
Let $S({\mathbb R}^d)$ be the real space of rapidly decreasing $C^{\infty}$-functions.

In the Malliavin calculus, a key role is played by the Malliavin covariance matrix which is defined as follows.
\begin{definition}
Suppose that $F=(F^1,\cdots,F^d)$ is a random vector whose components belong to the space ${\mathbb D}_1^{\infty-}$. We associate to $F$ the following random symmetric nonnegative definite matrix:
\begin{eqnarray*}
\Sigma_F(\omega)=(\sigma_F^{ij}(\omega))_{1\leqslant i,j\leqslant d}:=(( DF^i,DF^j)_{\mathbb H})_{1\leqslant i,j\leqslant d}.
\end{eqnarray*}
The matrix $\Sigma_F(\omega)$ will be called the Malliavin covariance matrix of the random vector $F$. We will say that a random vector $F=(F^1,\cdots,F^d)$ whose components are in ${\mathbb D}_{\infty}^{\infty-}$ is nondegenerate if its Malliavin covariance matrix $\Sigma_F(\omega)$ is invertible a.s. and
\begin{eqnarray*}
\Gamma_F(\omega)=(\gamma_F^{ij}(\omega))_{1\leqslant i,j\leqslant d}:=(\Sigma_F(\omega))^{-1}\in
\mathbb{L}^{\infty-}({\mathbb R}^d\otimes{\mathbb R}^d).
\end{eqnarray*}
\end{definition}

Set $d\mu_G:=G\cdot d\mu$ and
\begin{eqnarray}\label{density}
p_{F,G}(y)=\frac{\mu_G(F\in dy)}{dy}.
\end{eqnarray}
Denote by $E_G$ the integral w.r.t $\mu_G$. The following proposition is taken from \cite[Proposition 23]{BC1} or \cite[Lemma 2.1]{BC2}.
\begin{proposition}\label{bally}
Let $F\in{\mathbb D}_2^{\infty-}({\mathbb R}^d)$ and let $G\in{\mathbb D}_1^{\infty-}$ take values on $[0,1]$ with
\begin{eqnarray*}
1+E_G[\|D\ln G\|_{\mathbb H}^p]<\infty\quad\text{for every}\quad p\geq 1.
\end{eqnarray*}
Assume that $A$ is a measurable set such that $G1_A=0$ and for any $p>1$,
\begin{eqnarray*}
E[|\det(\Sigma_F)|^{-p}1_{A^c}]<\infty.
\end{eqnarray*}
Then the law of $F$ under $\mu_G$ is absolutely continuous with respect to the Lebesgue measure on ${\mathbb R}^d$. Moreover, for every $p>d$ there exist some universal constants $C$ and $q>1$ depending on $d$ and $p$ such that the density $p_{F,G}$ satisfies
\begin{eqnarray*}
p_{F,G}(y)\leqslant C(1+E_G[|\det(\Sigma_F)|^{-p}])^q (1+\|F\|_{2,p,G}+\|LF\|_{p,G})^q(1+E_G[\|D\ln G\|_{\mathbb H}^p])^q.
\end{eqnarray*}
\end{proposition}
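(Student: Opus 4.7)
The plan is to adapt the classical Malliavin density bound to the weighted measure $\mu_G$, with the weight $G$ producing extra terms involving $D\ln G$. The two building blocks are a weighted integration by parts (IBP) formula under $\mu_G$ and a Fourier/Sobolev-embedding argument; the crux is organising H\"older's inequality so that the three factors in the statement decouple cleanly.

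First I would derive the weighted IBP formula. For smooth scalar $\Phi$ and $\mH$-valued $h$, the divergence relation $\delta(Gh) = G\delta(h) - \langle DG, h\rangle_\mH$ combined with $DG = G\, D\ln G$ on $\{G>0\}$ (the quotient is legitimate because the hypothesis $G 1_A = 0$ handles the zero set of $G$) yields
\[
E_G[\langle D\Phi, h\rangle_\mH] = E_G\bigl[\Phi\bigl(\delta(h) - \langle D\ln G, h\rangle_\mH\bigr)\bigr].
\]
Apply this with $\Phi = \varphi(F)$ for $\varphi\in C_c^\infty(\mR^d)$ and $h = \gamma_F^{ij} DF^j$, which is $\mu_G$-a.s.\ well defined because $\Sigma_F$ is invertible on $A^c$. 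Using the chain-rule identity $\partial_i\varphi(F) = \sum_j \gamma_F^{ij}\langle D\varphi(F), DF^j\rangle_\mH$ gives a Malliavin IBP under $\mu_G$:
\[
E_G[\partial_i\varphi(F)] = E_G[\varphi(F)\, H_i],
\]
\[
H_i = -\sum_j \bigl[\gamma_F^{ij} LF^j + \langle D\gamma_F^{ij}, DF^j\rangle_\mH + \gamma_F^{ij}\langle D\ln G, DF^j\rangle_\mH\bigr],
\]
where the term $D\gamma_F^{ij}$ is expanded via Cramer's rule and introduces $D^2 F$. Standard approximation arguments then deliver the absolute continuity.

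For the pointwise bound I would invoke Fourier inversion: $p_{F,G}(y) = (2\pi)^{-d}\int e^{-i\xi\cdot y}\psi(\xi)\,d\xi$ with $\psi(\xi) = E_G[e^{i\xi\cdot F}]$. The IBP applied to $\varphi(x) = e^{i\xi\cdot x}$ gives $|\xi_i \psi(\xi)| \leq E_G[|H_i|]$. Bounding $\|p_{F,G}\|_\infty$ via the Sobolev embedding $W^{1,p}(\mR^d)\hookrightarrow L^\infty(\mR^d)$ for $p > d$ (equivalently, damping the Fourier integral by $(1+|\xi|^2)^{-s/2}$ with $s > d/2$ and using H\"older in $\xi$) reduces everything to controlling $E_G[|H_i|^p]^{1/p}$. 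At this stage I apply H\"older's inequality to $|H_i|^p$ together with Cramer's expansion $\gamma_F^{ij} = (\det\Sigma_F)^{-1}\,\mathrm{cof}_F^{ij}$, whose cofactor is a polynomial in the entries of $DF^k$. This splits the three ingredients: the negative moment of $\det\Sigma_F$ yields $(1 + E_G[|\det\Sigma_F|^{-p}])^q$; the $D^2F$ and $LF$ terms emerging from $D\gamma_F$ and $\delta$, together with the $DF^k$ factors from the cofactor, are absorbed into $(1 + \|F\|_{2,p,G} + \|LF\|_{p,G})^q$; the remaining $\langle D\ln G, \cdot\rangle_\mH$ contribution produces $(1 + E_G[\|D\ln G\|_\mH^p])^q$.

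The main technical obstacle is the exponent bookkeeping: one needs a single universal exponent $q = q(d,p)$ to govern all three factors simultaneously. This forces uniform H\"older exponents across the split and a once-and-for-all choice of the Sobolev parameters in terms of $d$ and $p$; any imprecision in the splitting leads either to mixed exponents on the three factors or to spurious powers from iterated IBP. The detailed organisation of these estimates is precisely the content of the Bally--Caramellino arguments in \cite{BC1, BC2}, to which I would refer for the final verification.
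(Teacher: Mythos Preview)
The paper does not prove this proposition at all: it is stated as a direct quotation from Bally--Caramellino \cite[Proposition~23]{BC1} (or \cite[Lemma~2.1]{BC2}), with no argument given. Your proposal therefore cannot be compared against any proof in the paper; there is none.

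That said, your sketch is broadly in the spirit of the Bally--Caramellino argument, and you correctly defer to \cite{BC1,BC2} for the details. One caution: a single integration by parts only yields $|\xi|^{-1}$ decay of the characteristic function, which is not integrable in $\mR^d$ for $d\geq 1$, so the naive Fourier-inversion step as written is insufficient. The actual mechanism in \cite{BC1} is a Riesz-transform representation of the density that circumvents the need for $d$ iterated IBPs (which would require $F\in\mD_{d+1}^{\infty-}$ rather than the stated $F\in\mD_2^{\infty-}$); this is precisely the point of their result. Since you ultimately cite \cite{BC1,BC2} for the verification, your proposal is acceptable as a heuristic outline, but the Sobolev-embedding step would need to be replaced by the Riesz-transform machinery if you were to write out a self-contained proof.
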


In the rest of this article, we will adopt the following notations. Let $\Psi:[0,\infty)\mapsto{\mathbb R}$ be a $C_b^{\infty}$ function ($\Psi$ and all of its partial derivatives are bounded) such that
\begin{eqnarray*}
1_{[0,\frac{1}{8}]}\leqslant\Psi\leqslant1_{[0,\frac{1}{4}]},
\end{eqnarray*}
and $\Psi_1:[0,\infty)\mapsto{\mathbb R}$ be a $C_b^{\infty}$ function such that
\begin{eqnarray*}
1_{[0,\frac{1}{4}]}\leqslant\Psi_1\leqslant 1_{[0,\frac{1}{2}]}
\end{eqnarray*}
and
\begin{eqnarray}
\sup_x|(\ln\Psi_1(x))'|^p\Psi_1(x)<\infty\quad\text{for every }p\geqslant 1. \label{bally example}
\end{eqnarray}
For $F_1,F_2\in{\mathbb D}_{\infty}^{\infty-}({\mathbb R}^d)$, we define $R_{F_1,F_2}$ by
\begin{eqnarray*}
R_{F_1,F_2}=\frac{\|D(F_1-F_2)\|_{\mathbb H}^2(1+\|\Sigma_{F_1}\|_2^2)^{(d-1)/2}}{\det(\Sigma_{F_1})},
\end{eqnarray*}
where $\|DF_i\|_{\mathbb H}^2=\sum_{j=1}^d\|DF_i^j\|_{\mathbb H}^2$, $i=1,2$ and $\|\Sigma_{F_1}\|_2$ is the Hilbert-Schmidt norm of the Malliavin covariance matrix $\Sigma_{F_1}$. It is obvious that $\Psi_1(R_{F_1,F_2})=1$ on the set $\{\Psi(R_{F_1,F_2})\neq 0\}$. For the set $\bigcup_{k=0}^{\infty}\{D^k(\Psi(R_{F_1,F_2}))\neq 0\}$ and $\bigcup_{k=0}^{\infty}\{D^k(\Psi_1(R_{F_1,F_2}))$
$\neq 0\}$, we have the following result:
\begin{eqnarray}\label{important result}
\bigcup_{k=0}^{\infty}\{D^k(\Psi(R_{F_1,F_2}))\neq 0\}\subset\{\det(\Sigma_{F_2+t(F_1-F_2)})\geqslant4^{-d}
\frac{(\det(\Sigma_{F_1}))^d}{\|\Sigma_{F_1}\|^{d(d-1)}}\}\quad\text{a.s.},
\end{eqnarray}
and
\begin{eqnarray}\label{important result1}
\bigcup_{k=0}^{\infty}\{D^k(\Psi_1(R_{F_1,F_2}))\neq 0\}\subset\{\det(\Sigma_{F_2+t(F_1-F_2)})\geqslant
(1-\frac{\sqrt{2}}{2})^{2d}\frac{(\det(\Sigma_{F_1}))^d}{\|\Sigma_{F_1}\|^{d(d-1)}}\}\quad\text{a.s.},
\end{eqnarray}
where $\|\Sigma_{F_1}\|$ is the operator norm. The proof of this result can be found in \cite[Remark 14]{CK-HL}.

Watanabe \cite{W} has introduced the fractional order Sobolev spaces on the Wiener space and studied their applications to the solutions of stochastic differential equations. An equivalent approach to these spaces using $K$-method  then appeared in \cite{AMR} and \cite{HR}. Now let us recall results in this respect that will be needed in the sequel.
\begin{definition}
For any $0<\alpha<1$ and any $1<p<\infty$, we define
\begin{eqnarray*}
{\mathcal E}_{\alpha}^p=({\mathbb L}^p, {\mathbb D}_1^p)_{\alpha,p},
\end{eqnarray*}
where $(\cdot,\cdot)$ denotes the real interpolation space as in \cite{T}.
\end{definition}

There are several equivalent norms in ${\mathcal E}_{\alpha}^p$ (cf. \cite{T}). The one we shall use is given by Peetre's $K$-method:
\begin{eqnarray*}
\|F\|_{{\mathcal E}^p_{\alpha}}:=\big[\int_0^1[\epsilon^{-\alpha}K(\epsilon,p,F)]^p\frac{d\epsilon}{\epsilon}\big]^{\frac{1}{p}},
\end{eqnarray*}
where
\begin{eqnarray*}
K(\epsilon,p,F):=\inf\{\|F_1\|_p+\epsilon\|F_2\|_{1,p},F_1+F_2=F,F_1,F_2\in{\mathbb L}^p\}.
\end{eqnarray*}

\begin{remark}\label{discrete}
Let $0<\alpha<1$, $1<p<\infty$, then $F\in{\mathcal E}_{\alpha}^p$ if and only if
\begin{eqnarray*}
\sum_{n=1}^\infty 2^{np\alpha}K(2^{-n},p,F)^p<\infty.
\end{eqnarray*}
\end{remark}

For the relationship between ${\mathbb D}_{\alpha}^p$ and ${\mathcal E}_{\alpha}^p$, the following theorem is proved in \cite[Theorem 1.1]{W} directly using specific properties of Wiener functionals.
\begin{theorem}\label{two types}
For every $0<\alpha<1$, $1<p<\infty$ and $\varepsilon>0$, we have
\begin{eqnarray*}
{\mathcal E}_{\alpha+\varepsilon}^p\subset{\mathbb D}_{\alpha}^p\subset{\mathcal E}_{\alpha-\varepsilon}^p.
\end{eqnarray*}
\end{theorem}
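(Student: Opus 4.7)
The plan is to deduce both inclusions by combining general abstract interpolation theory with one Wiener-space specific identification. The abstract ingredients are: (i) for any Banach couple $(A_0,A_1)$ and $0<\theta<1$,
$$(A_0,A_1)_{\theta,1}\hookrightarrow[A_0,A_1]_{\theta}\hookrightarrow(A_0,A_1)_{\theta,\infty},$$
a classical real-versus-complex interpolation bound; and (ii) when $A_1\hookrightarrow A_0$, strict monotonicity in the first parameter, namely $(A_0,A_1)_{\theta_1,q}\hookrightarrow(A_0,A_1)_{\theta_0,r}$ whenever $\theta_0<\theta_1$ and $q,r\in[1,\infty]$. In our setting $A_0=\mathbb{L}^p$ and $A_1=\mathbb{D}_1^p$, and the embedding $\mathbb{D}_1^p\hookrightarrow\mathbb{L}^p$ is immediate from the $\mathbb{L}^p$-boundedness of the resolvent $(1-L)^{-1/2}$, itself an integral average of OU-semigroup contractions.

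The Wiener-space input I would need is the identification $\mathbb{D}_\beta^p=[\mathbb{L}^p,\mathbb{D}_1^p]_\beta$ for every $0<\beta<1$. I would establish this by Stein's complex interpolation theorem applied to the analytic family $z\mapsto(1-L)^{-z/2}$. The hypothesis to verify is the uniform $\mathbb{L}^p$-boundedness of the imaginary powers $(1-L)^{is}$, $s\in\mathbb{R}$; on the $n$-th Wiener chaos this operator acts as multiplication by $(1+n)^{is}$, and $n\mapsto(1+n)^{is}$ is an admissible multiplier covered by Meyer's multiplier theorem. This identification is the main technical step of the proof.

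Granted these two tools, I would assemble both inclusions as chains. Using $\alpha<\alpha+\varepsilon/2<\alpha+\varepsilon$,
$$\mathcal{E}_{\alpha+\varepsilon}^p=(\mathbb{L}^p,\mathbb{D}_1^p)_{\alpha+\varepsilon,p}\hookrightarrow(\mathbb{L}^p,\mathbb{D}_1^p)_{\alpha+\varepsilon/2,1}\hookrightarrow[\mathbb{L}^p,\mathbb{D}_1^p]_{\alpha+\varepsilon/2}=\mathbb{D}_{\alpha+\varepsilon/2}^p\hookrightarrow\mathbb{D}_\alpha^p,$$
with the last step being the trivial scale embedding in the Bessel-potential family, and dually
$$\mathbb{D}_\alpha^p=[\mathbb{L}^p,\mathbb{D}_1^p]_\alpha\hookrightarrow(\mathbb{L}^p,\mathbb{D}_1^p)_{\alpha,\infty}\hookrightarrow(\mathbb{L}^p,\mathbb{D}_1^p)_{\alpha-\varepsilon,p}=\mathcal{E}_{\alpha-\varepsilon}^p.$$
The hard part is the complex-interpolation identification: without extra information on the couple one has $(A_0,A_1)_{\theta,q}\subset[A_0,A_1]_{\theta}$ only at the endpoint $q=1$, which is precisely what forces the loss of an arbitrarily small $\varepsilon$ in both directions. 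Watanabe's original proof in \cite{W} bypasses Stein's theorem by decomposing functionals into Wiener chaos and bounding $\|(1-L)^{\alpha/2}F\|_p$ against $K$-functional quantities via hypercontractivity; that is the route to take if one wants explicit constants and an entirely elementary argument within the Wiener framework.
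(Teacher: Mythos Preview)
Your argument is correct. Note, however, that the present paper does \emph{not} give its own proof of this statement: it simply quotes the result from Watanabe \cite[Theorem~1.1]{W}, remarking that the proof there is carried out ``directly using specific properties of Wiener functionals.'' So the comparison is between your abstract route and Watanabe's original argument, which---as you yourself indicate in your closing paragraph---proceeds by Wiener chaos decomposition together with hypercontractivity estimates to bound $\|(1-L)^{\alpha/2}F\|_p$ against $K$-functional data.

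Your approach is genuinely different and in some sense more structural: once one knows that $(1-L)$ has bounded imaginary powers on $\mathbb{L}^p$ (a consequence of Meyer's multiplier theorem applied to $n\mapsto(1+n)^{is}$), the identification $\mathbb{D}_\beta^p=[\mathbb{L}^p,\mathbb{D}_1^p]_\beta$ is a general Banach-space fact, and both inclusions then fall out of the standard comparison $(A_0,A_1)_{\theta,1}\hookrightarrow[A_0,A_1]_\theta\hookrightarrow(A_0,A_1)_{\theta,\infty}$ plus parameter monotonicity. This buys you a clean modular argument that would transplant to any setting where BIP is available. Watanabe's direct proof, on the other hand, stays entirely within the Wiener-space toolkit and yields somewhat more explicit constants without appealing to complex interpolation or Stein's analytic-family theorem; it is also self-contained for readers unfamiliar with abstract interpolation machinery. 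Both routes incur the same unavoidable $\varepsilon$-loss, for the reason you identify: the real--complex comparison is sharp only at $q=1$ and $q=\infty$.
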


By the above theorem we deduce immediately that for every $\alpha>0$, ${\mathbb D}_{\alpha-}^{p}:=\bigcap_{0<\beta<\alpha}{\mathbb D}_{\beta}^{p}=\bigcap_{0<\beta<\alpha}{\mathcal E}_{\beta}^{p}=:{\mathcal E}_{\alpha-}^{p}$ and ${\mathbb D}_{\alpha-}^{\infty-}:=\bigcap_{0<\beta<\alpha}{\mathbb D}_{\beta}^{\infty-}=\bigcap_{0<\beta<\alpha}{\mathcal E}_{\beta}^{\infty-}=:{\mathcal E}_{\alpha-}^{\infty-}$.

\section{Properties of the fractional order Sobolev spaces under local assumptions}
Now we will study local properties of the space ${\mathcal E}_{\alpha}^p$ under local assumptions.
In what follows we denote by $C$ a generic constant which can be different from one formula to another.

Let $F\in{\mathbb D}_{\alpha}^{\infty-}$ such that $F>0$ a.s. and
$1/F\in{\mathbb L}^{\infty-}$. It is well known that if
$\alpha$ is a positive integer, then $1/F\in{\mathbb D}_{\alpha}^{\infty-}$. When $\alpha>0$ is not integer, it is
proved in \cite{W} that $1/F$ only belongs to ${\mathbb D}_{\alpha-}^{\infty-}$.
Our first result is a local version of this result whose proof is based on the $K$-method.
\begin{theorem}\label{inverse}
Let $\alpha=k+\sigma$, $k\in{\mathbb N}$, $0<\sigma\leqslant 1$. Suppose that
$F\in{\mathcal E}_{\alpha}^{\infty-}$ or ${\mathbb D}_{\alpha}^{\infty-}$ is a nonnegative Wiener functional, $G\in {\mathbb D}_{k+1}^{\infty-}$,
and $A$ is a measurable set  such that $G1_A=0$ and
\begin{eqnarray*}
E[|\frac{1}{F}|^p1_{A^c}]<\infty,\quad\text{for every}\quad p> 1.
\end{eqnarray*}
Then we have
\begin{eqnarray*}
\frac{1}{F}\cdot G\in{\mathbb D}_{\alpha-}^{\infty-}.
\end{eqnarray*}
Furthermore, if $\alpha$ is a positive integer, $F,G\in {\mathbb D}^{\infty-}_{\alpha}$, then we have
\begin{eqnarray*}
\frac{1}{F}\cdot G\in{\mathbb D}_{\alpha}^{\infty-}.
\end{eqnarray*}
\end{theorem}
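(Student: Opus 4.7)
The plan is to separate the integer and fractional parts of $\alpha$ and attack each with a different tool: product/chain rules plus approximation for the integer case, and Peetre's $K$-method for the fractional case.

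For the integer case ($\alpha=k$ a positive integer, $F,G\in \mathbb{D}_k^{\infty-}$, where the stronger conclusion $G/F\in \mathbb{D}_k^{\infty-}$ is sought), I would approximate $G/F$ by $G\,\phi_\delta(F)$, where $\phi_\delta:[0,\infty)\to\mathbb{R}$ is smooth and bounded, equal to $1/x$ on $[\delta,\infty)$ and equal to a constant on $[0,\delta/2]$. The chain and product rules give $G\,\phi_\delta(F)\in \mathbb{D}_k^{\infty-}$, with $D^j(G\,\phi_\delta(F))$ expressed as a finite sum of monomials involving derivatives of $G$, of $F$, and factors $\phi_\delta^{(m)}(F)$. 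Since $G\cdot 1_A=0$, all such monomials vanish on $A$, while on $A^c$ the bound $(1/F)1_{A^c}\in L^{\infty-}$, together with the polynomial decay $\mu(\{F<\delta\}\cap A^c)\lesssim \delta^M$ for every $M$ (by Markov applied to $(1/F)1_{A^c}$), shows that $G\,\phi_\delta(F)\to G/F$ in $\mathbb{D}_k^p$ for every $p$ as $\delta\downarrow 0$, giving the desired conclusion.

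For the fractional case $\alpha=k+\sigma$ with $0<\sigma<1$, it suffices by Theorem \ref{two types} to produce $G/F\in \mathcal{E}_{\alpha-\tau}^p$ for arbitrary $p\in(1,\infty)$ and $\tau>0$. A standard induction on $k$, using the identity $D(G/F)=(DG)/F-G\,DF/F^2$ to pass to lower $k$ and the integer step above to handle the routine derivatives, reduces the task to the base case $k=0$, $\alpha=\sigma\in(0,1)$. For this base case I would use Peetre's $K$-method: for each $\epsilon>0$, pick a $K$-decomposition $F=F_1^\epsilon+F_2^\epsilon$ in a sufficiently large $L^q$-norm with $\|F_1^\epsilon\|_q\lesssim \epsilon^\sigma$ and $\|F_2^\epsilon\|_{1,q}\lesssim \epsilon^{\sigma-1}$, and set
\[
H_2^{\epsilon,\delta}:=G\,\phi_\delta(F_2^\epsilon),\qquad H_1^{\epsilon,\delta}:=G/F-H_2^{\epsilon,\delta}.
\]
The product and chain rules yield $\|H_2^{\epsilon,\delta}\|_{1,p}\lesssim \delta^{-2}\epsilon^{\sigma-1}$, while on the good set $A^c\cap\{F\geq\delta\}\cap\{F_2^\epsilon\geq\delta\}$ the identity $1/F-1/F_2^\epsilon=-F_1^\epsilon/(F\,F_2^\epsilon)$ together with H\"older's inequality yields the main contribution $\|H_1^{\epsilon,\delta}\|_p\lesssim \delta^{-1}\epsilon^\sigma$; contributions from the bad set are absorbed using $G\cdot 1_A=0$ and the tail bounds $\mu(\{F<\delta\}\cap A^c)\lesssim\delta^M$, $\mu(\{F_2^\epsilon<\delta\}\cap A^c)\lesssim \delta^M+\delta^{-q}\epsilon^{q\sigma}$. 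Balancing $\delta=\epsilon^a$ for $a>0$ small produces $K(\epsilon,p,G/F)\lesssim \epsilon^{\sigma-c(a)}$ with $c(a)\to 0$, and Remark \ref{discrete} then gives $G/F\in \mathcal{E}_{\sigma-\tau}^p$ for every $\tau>0$.

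The main obstacle will be that the abstract $K$-decomposition of $F$ produces a smooth part $F_2^\epsilon$ without any positivity or lower bound, so $\phi_\delta(F_2^\epsilon)$ is only a truncated approximation to $1/F_2^\epsilon$ rather than an exact reciprocal. Making the balance $\delta=\epsilon^a$ actually work requires carefully quantifying the bad-set contributions, and this is where all three hypotheses --- $G\cdot 1_A=0$, $(1/F)1_{A^c}\in L^{\infty-}$, and $F\in\mathcal{E}_\sigma^{\infty-}$ --- must genuinely interact: the first zeroes out any contribution coming from $A$, the second gives polynomial control of $\mu(\{F<\delta\}\cap A^c)$, and the third controls $\mu(\{F_2^\epsilon<\delta\}\cap A^c)$ via the decomposition bound on $F_1^\epsilon$.
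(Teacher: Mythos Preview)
Your base case ($k=0$) and the paper's both use Peetre's $K$-method, but the paper avoids your two-parameter balance. Instead of applying a smooth cutoff $\phi_\delta$ to the smooth part $F_2^\epsilon$, the paper takes $F_n\in\mathbb D_1^p$ from the discrete $K$-decomposition (so $\|F-F_n\|_p\lesssim 2^{-n\alpha}$, $\|F_n\|_{1,p}\lesssim 2^{n(1-\alpha)}$) and simply sets $\widetilde F_n:=F_n\vee 0+2^{-n\alpha}$, tying the lower bound to the approximation scale. The crucial step is an auxiliary lemma showing $\sup_n E\big[(1/\widetilde F_n)^{p/4}\,1_{A^c}\big]<\infty$, proved by splitting along $\{|\widetilde F_n-F|\leq F/2\}$ on $A^c$; this \emph{uniform} integrability on $A^c$ is precisely what your bad-set analysis struggles to manufacture, and once you have it, H\"older gives $\|G/\widetilde F_n\|_{1,p'}\lesssim 2^{n(1-\alpha)}$ and $\|G/\widetilde F_n-G/F\|_{p'}\lesssim 2^{-n\alpha}$ with no loss. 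Your route via $\delta=\epsilon^a$ is correct but pays an artificial $\epsilon^{-2a}$ tax that you then have to send to zero.

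Your reduction to $k=0$ by ``standard induction'' via $D(G/F)=(DG)/F-G\,DF/F^2$ does not close as written. In the second term the factor playing the role of the new ``$G$'' is $G\cdot DF$, but $DF\in\mathcal E_{k-1+\sigma}^{\infty-}$ only, so the product lands in $\mathbb D_{k-1}^{\infty-}$, one derivative short of the $\mathbb D_{(k-1)+1}^{\infty-}$ that the hypothesis at level $k-1$ demands. What does work---and is what the paper's ``no essential difference'' implicitly means---is to run the $K$-method construction directly on each term of the full expansion of $D^k(G/F)$: every such term carries a factor $D^{a_0}G$ (with $a_0\leq k$, hence in $\mathbb D_1^{\infty-}$ and vanishing on $A$), factors $D^{a_i}F$ ($a_i\leq k$) that one approximates by $D^{a_i}F_n$ coming from the \emph{same} $K$-decomposition of $F$, and powers of $1/F$ handled by the uniform bound on $1/\widetilde F_n$. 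So the strategy is right, but it is a direct $K$-method argument at level $k$, not an induction that feeds back into the theorem as stated.
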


\begin{remark}\label{composite}
In fact, as can be seen from the proof of Theorem \ref{inverse}, for every $\Psi\in C_b^{\infty}$,
we have $\Psi(1/F)\cdot G\in{\mathbb D}_{\alpha-}^{\infty-}$, and
it's also easy to obtain that for any $\beta<\alpha$, $p<p'$, we have
\begin{eqnarray}
\|\Psi(\frac{1}{F})\cdot G\|_{\beta,p}\leqslant C\|G\|_{k+1,p'}. \label{estimate0}
\end{eqnarray}
Furthermore, if there exists $G_1\in{\mathbb D}_{k+1}^{\infty-}$ such that
$G_1=1$ on the set $\{G\neq 0\}$ and $G_11_A=0$, then we have $G=G\cdot G_1$ and by $\Psi(1/F)\cdot G_1\in{\mathbb D}_{\alpha-}^{\infty-}$,
\begin{eqnarray}
\|\Psi(\frac{1}{F})\cdot G\|_{\beta,p}=\|\Psi(\frac{1}{F})\cdot G_1\cdot G\|_{\beta,p}\leqslant C\|G\|_{\beta,p'}.
\label{estimate00}
\end{eqnarray}
\end{remark}

In order to prove Theorem \ref{inverse}, we need the following lemma.
\begin{lemma}\label{important lemma}
Let $0<\alpha<1$, $4<p<\infty$. Suppose that $F\in{\mathbb L}^p$ is a nonnegative Wiener functional and $F_n\in{\mathbb L}^p$
such that
\begin{itemize}
  \item[(i)] \begin{eqnarray*}
        \|F_n-F\|_p\leqslant C2^{-n\alpha};
        \end{eqnarray*}
  \item[(ii)]
        \begin{eqnarray*}
        E[|\frac{1}{F}|^p1_{A^c}]<\infty.
        \end{eqnarray*}
\end{itemize}
Set
\begin{eqnarray*}
\widetilde{F_n}:=F_n\vee 0+2^{-n\alpha}.
\end{eqnarray*}
Then we have
\begin{eqnarray}\label{result}
\sup_n E[|\frac{1}{\widetilde{F_n}}|^{p/4}1_{A^c}]<\infty.
\end{eqnarray}
\end{lemma}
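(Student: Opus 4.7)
The plan is to split the expectation in (\ref{result}) according to whether $|F_n-F|$ is small or large compared with $F/2$. Write $\epsilon_n:=2^{-n\alpha}$. Hypothesis (ii) already forces $F>0$ a.s.\ on $A^c$ (otherwise $E[|1/F|^p 1_{A^c}]$ would be infinite). Introduce the ``good'' event $B_n:=\{|F_n-F|\leqslant F/2\}$. On $B_n\cap A^c$ one has $F_n\geqslant F/2>0$, so $F_n\vee 0=F_n$ and hence $\widetilde{F_n}=F_n+\epsilon_n\geqslant F/2$, which yields $1/\widetilde{F_n}\leqslant 2/F$. On the complementary event I would use only the universal lower bound $\widetilde{F_n}\geqslant\epsilon_n$ built into the definition of $\widetilde{F_n}$, so that $1/\widetilde{F_n}\leqslant 2^{n\alpha}$.

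The heart of the argument is the control of $\mu(B_n^c\cap A^c)$. For any threshold $\delta>0$, the inclusion $B_n^c\subset\{F\leqslant 2\delta\}\cup\{|F_n-F|>\delta\}$ combined with Markov's inequality and hypotheses (i)--(ii) gives $\mu(B_n^c\cap A^c)\leqslant (2\delta)^p E[|1/F|^p 1_{A^c}]+\delta^{-p}\|F_n-F\|_p^p\leqslant C\delta^p+C\epsilon_n^p\delta^{-p}$. The two terms are balanced by the choice $\delta=\epsilon_n^{1/2}=2^{-n\alpha/2}$, which yields $\mu(B_n^c\cap A^c)\leqslant C\cdot 2^{-np\alpha/2}$.

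Combining the two halves of the split, the integral in (\ref{result}) is bounded by $2^{p/4}E[|1/F|^{p/4} 1_{A^c}]+2^{np\alpha/4}\mu(B_n^c\cap A^c)$. The first summand is finite and independent of $n$, since Jensen's inequality applied to the concave map $y\mapsto y^{1/4}$ together with hypothesis (ii) gives $E[|1/F|^{p/4} 1_{A^c}]\leqslant E[|1/F|^p 1_{A^c}]^{1/4}<\infty$. The second summand is at most $C\cdot 2^{np\alpha/4-np\alpha/2}=C\cdot 2^{-np\alpha/4}\leqslant C$ uniformly in $n$. This establishes (\ref{result}).

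The only nontrivial step is the selection of the intermediate scale $\delta\sim\epsilon_n^{1/2}$ that balances the two Markov bounds; once this threshold is located, the exponent $p/4$ in (\ref{result}) (as opposed to the slightly sharper value $p/2$ that the same argument would deliver) leaves a factor $2^{-np\alpha/4}$ of room to spare, so I do not expect any further obstacle. The assumption $p>4$ plays no role beyond making $p/4>1$ so that the conclusion is a meaningful moment bound.
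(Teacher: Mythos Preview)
Your proof is correct and follows essentially the same strategy as the paper: split according to whether the approximation is close to $F$ relative to $F$ itself, use comparability with $1/F$ on the good set, and use the crude bound $1/\widetilde{F_n}\leqslant 2^{n\alpha}$ together with a probability estimate of order $2^{-np\alpha/2}$ on the bad set. The paper defines the good event via $r_n:=|\widetilde{F_n}-F|/F\leqslant 1/2$ and bounds $P(A^c\cap\{r_n>1/2\})$ directly by Chebyshev plus the Cauchy--Schwarz inequality, whereas you define $B_n:=\{|F_n-F|\leqslant F/2\}$ and obtain the same probability bound through the intermediate threshold $\delta=\epsilon_n^{1/2}$; these are minor variants of the same idea.
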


\begin{proof}
Obviously
\begin{eqnarray}\label{close}
\|\widetilde{F_n}-F\|_p\leqslant C2^{-n\alpha},
\end{eqnarray}
\begin{eqnarray}\label{index}
E[|\frac{1}{\widetilde{F_n}}|^q|]\leqslant 2^{nq\alpha}, \quad\forall q>1.
\end{eqnarray}
We define
\begin{eqnarray*}
r_n=|\frac{\widetilde{F_n}-F}{F}|1_{A^c}.
\end{eqnarray*}
Therefore we split (\ref{result}) into two parts:
\begin{eqnarray*}
E[|\frac{1}{\widetilde{F_n}}|^{p/4}1_{A^c}]&=&E[|\frac{1}{\widetilde{F_n}}|^{p/4}1_{A^c}
1_{\{r_n\leqslant\frac{1}{2}\}}]+E[|\frac{1}{\widetilde{F_n}}|^{p/4}1_{A^c}1_{\{r_n>\frac{1}{2}\}}]\\
&=:&\Xi_1+\Xi_2.
\end{eqnarray*}
Since on $A^c$,
\begin{eqnarray*}
\frac{2}{3F}1_{\{r_n\leqslant\frac{1}{2}\}}\leqslant\frac{1}{\widetilde{F_n}}1_{\{r_n\leqslant\frac{1}{2}\}}
\leqslant\frac{2}{F}1_{\{r_n\leqslant\frac{1}{2}\}},
\end{eqnarray*}
by Assumption (ii) we have
\begin{eqnarray*}
\sup_n\Xi_1\leqslant E[|\frac{2}{F}|^{p/4}1_{A^c}1_{\{r_n\leqslant\frac{1}{2}\}}]<\infty.
\end{eqnarray*}
Next let us deal with the term $\Xi_2$.
By (\ref{close}) and Assumption (ii), we have
\begin{eqnarray*}
P(A^c\{r_n>\frac{1}{2}\})&\leqslant& 2^{p/2}E[(\frac{\widetilde{F_n}-F}{F})^{p/2}1_{A^c}]\\
&\leqslant& 2^{p/2}\|\widetilde{F_n}-F\|_p^{p/2}\cdot\|\frac{1}{F}1_{A^c}\|_p^{p/2}\\
&\leqslant&C2^{-np\alpha /2}.
\end{eqnarray*}
Consequently, by (\ref{index}) we have
\begin{eqnarray*}
\sup_n\Xi_2&\leqslant& E[|\frac{1}{\widetilde{F_n}}|^p]^{1/4}\cdot P(A^c\{r_n>\frac{1}{2}\})^{1/2}\\
&\leqslant& \sup_nC2^{np\alpha /4}\cdot 2^{-np\alpha /4}<\infty,
\end{eqnarray*}
and hence we complete the proof.
\end{proof}

\begin{proof}[Proof of Theorem \ref{inverse}]
Since there is no essential difference, we assume $0<\alpha<1$ and $F\in{\mathcal E}_{\alpha}^{\infty-}$ for simplicity.
For every $p'>1$, let $p=18p'$, $r=2p'=p/9$ and $r'=p/4$.
By Remark \ref{discrete}, for each $n$, we can find $F_n\in{\mathbb D}_1^p$ such that
\begin{eqnarray*}
\|F_n-F\|_p\leqslant C2^{-n\alpha},\quad\|F_n\|_{1,p}\leqslant C2^{n(1-\alpha)}.
\end{eqnarray*}
Set
\begin{eqnarray*}
\widetilde{F_n}:=F_n\vee 0+2^{-n\alpha}.
\end{eqnarray*}
By the previous lemma, we have
\begin{eqnarray*}
\sup_n\|\frac{1}{\widetilde{F_n}}1_{A^c}\|_{r'}<\infty.
\end{eqnarray*}
Hence by H\"older inequality, we have
\begin{eqnarray*}
\|D(\frac{1}{\widetilde{F_n}})1_{A^c}\|_{r}\leqslant C\|D\widetilde{F_n}\|_p\cdot\|\frac{1}{\widetilde{F_n}}1_{A^c}\|^2_{r'}\leqslant C2^{n(1-\alpha)}.
\end{eqnarray*}
Therefore, by the Meyer equivalence and $\{DG\neq 0\}\subset \{G\neq 0\}\subset A^c$ we have
\begin{eqnarray*}
\|\frac{1}{\widetilde{F_n}}\cdot G\|_{1,p'}\leqslant C E[(|\frac{1}{\widetilde{F_n}}|^{r}+\|D(\frac{1}
{\widetilde{F_n}})\|_{\mathbb H}^{r})1_{A^c}]^{\frac{1}{r}}\cdot\|G\|_{1,r}
\leqslant C2^{n(1-\alpha)}.
\end{eqnarray*}
A similar calculus gives
\begin{eqnarray*}
\|\frac{1}{\widetilde{F_n}}\cdot G-\frac{1}{F}\cdot G\|_{p'}
\leqslant\|G\|_r\cdot\|F-\widetilde{F_n}\|_p\cdot\|\frac 1 {\widetilde{F_n}}1_{A^c}\|_{r'}\cdot\|\frac 1 {F}1_{A^c}\|_{r'}
\leqslant C2^{-n\alpha}.
\end{eqnarray*}
By Remark \ref{discrete}, we obtain
\begin{eqnarray*}
\|\frac{1}{F}\cdot G\|_{{\mathcal E}_{\beta}^{p'}}<\infty,\quad\forall \beta<\alpha,
\end{eqnarray*}
and this completes the proof.
\end{proof}

If the assumption  $G\in {\mathbb D}_{k+1}^{\infty-}$ in Theorem \ref{inverse} is replaced by $G\in{\mathcal E}_{\alpha}^{\infty-}$,
we have the following theorem.
\begin{theorem}\label{inverse worse}
Let $\alpha=k+\sigma$, $k\in{\mathbb N}$, $0<\sigma\leqslant 1$. Suppose that
$F\in{\mathcal E}_{\alpha}^{\infty-}$ or ${\mathbb D}_{\alpha}^{\infty-}$ is a nonnegative Wiener functional,
$G\in {\mathcal E}_{\alpha}^{\infty-}$ or ${\mathbb D}_{\alpha-}^{\infty-}$ , and $A$ is a measurable set  such that $G1_A=0$ and
\begin{eqnarray*}
E[|\frac{1}{F}|^p1_{A^c}]<\infty,\quad\text{for every}\quad p> 1.
\end{eqnarray*}
Then we have
\begin{eqnarray*}
\frac{1}{F}\cdot G\in{\mathbb D}_{(k+\frac \sigma {2^{k+1}})-}^{\infty-}.
\end{eqnarray*}
\end{theorem}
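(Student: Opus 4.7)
The plan is to extend the $K$-method argument of Theorem \ref{inverse} by approximating not only $F$ but also $G$ by smooth functionals. The extra loss of a factor of $2$ in the fractional regularity index per level (ultimately giving $\sigma/2^{k+1}$) arises because the smooth approximants $G_m$ of $G$ no longer vanish on $A$, so residual ``$1_A$''-terms can only be absorbed via the identity $G_m 1_A = (G_m - G) 1_A$, pitting the blow-up rate of $1/\widetilde{F_\ell}$ against the decay rate of $\|G_m - G\|_p$.

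For the base case $k = 0$, $\alpha = \sigma \in (0, 1]$, I will fix $p' > 1$, set $p = 18 p'$, and use Remark \ref{discrete} to pick $F_\ell, G_m \in {\mathbb D}_1^p$ with the usual rates $\|F_\ell - F\|_p \leq C 2^{-\ell\sigma}$, $\|F_\ell\|_{1,p} \leq C 2^{\ell(1-\sigma)}$, and analogously for $G$. Setting $\widetilde{F_\ell} = F_\ell \vee 0 + 2^{-\ell\sigma}$, Lemma \ref{important lemma} gives $\sup_\ell \|(1/\widetilde{F_\ell}) 1_{A^c}\|_{p/4} < \infty$. I then write $(1/F)G = F_1 + F_2$ with $F_2 := (1/\widetilde{F_\ell}) G_m$ and split
\begin{eqnarray*}
F_1 = (1/F) 1_{A^c} (G - G_m) + [(1/F) - (1/\widetilde{F_\ell})] 1_{A^c} G_m - (1/\widetilde{F_\ell}) 1_A G_m.
\end{eqnarray*}
The first two pieces are bounded by $C 2^{-m\sigma}$ and $C 2^{-\ell\sigma}$ using the uniform $L^q$-bound on $(1/\widetilde{F_\ell}) 1_{A^c}$; for the third, I will use $G_m 1_A = (G_m - G) 1_A$ (since $G 1_A = 0$) to bound it by $\|1/\widetilde{F_\ell}\|_q \|G_m - G\|_{q'} \leq C 2^{(\ell - m)\sigma}$. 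Taking $m = 2\ell$ balances these three at $\|F_1\|_{p'} \leq C 2^{-\ell\sigma}$.

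To estimate $\|F_2\|_{1, p'}$, I differentiate $F_2$ and split each summand of $DF_2$ into $A^c$ and $A$ parts: on $A^c$ the uniform $L^q$-bound from Lemma \ref{important lemma} controls powers of $1/\widetilde{F_\ell}\cdot 1_{A^c}$; in the $A$-parts the factor containing $G_m$ (but not $DG_m$) can again be rewritten as $G_m - G$ and is then small. The only uncancellable contribution is $(1/\widetilde{F_\ell}) DG_m \cdot 1_A$, which H\"older bounds by $\|1/\widetilde{F_\ell}\|_q \|DG_m\|_{q'} \leq C 2^{\ell\sigma + m(1-\sigma)}$, giving (with $m = 2\ell$) the dominant cost $\|F_2\|_{1,p'} \leq C 2^{\ell(2-\sigma)}$. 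Peetre's formula and the optimal choice $\ell = n/2$ then yield $K(2^{-n}, p', (1/F)G) \leq C 2^{-n\sigma/2}$, so $(1/F) G \in \mathcal{E}_\beta^{p'}$ for every $\beta < \sigma/2$ by Remark \ref{discrete}; the conclusion follows from the arbitrariness of $p'$ and Theorem \ref{two types}.

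For $k \geq 1$ I will argue by induction, using $D((1/F) G) = -(DF) \cdot (G/F^2) + (1/F) \cdot DG$. The second summand reduces to applying the inductive hypothesis at level $k-1$ to $(F, DG)$ (note $DG \cdot 1_A = 0$ follows from the regularity of $G$ and $G 1_A = 0$), while the first reduces to the inductive hypothesis at level $k-1$ for $(F^2, G)$ and then multiplication by the $\mathbb{H}$-valued factor $DF \in \mathcal{E}_{k-1+\sigma}^{\infty-}$. Tracking the fractional indices through Leibniz, product estimates, and Theorem \ref{two types} yields regularity at least $k + \sigma/2^{k+1}$ as claimed. The principal obstacle throughout is the uncancellable $(1/\widetilde{F_\ell}) D G_m \cdot 1_A$ at the base, whose crude H\"older estimate is what forces the cost exponent to be $2-\sigma$ rather than $1-\sigma$, fixing the rate $\sigma/2$ at $k = 0$; compounded through the $k$-step induction this loss produces the halving of the fractional part at every further level.
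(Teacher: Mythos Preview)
Your base case $k=0$ is correct and reaches the same conclusion $\sigma/2$, but by a considerably more elaborate route than the paper. You keep the floor $2^{-\ell\sigma}$ in $\widetilde{F_\ell}$, introduce a second approximation index $m$ for $G$, split every term into $A$- and $A^c$-contributions, and then optimise over the pair $(\ell,m)$ via $m=2\ell$, $\ell=n/2$. The paper's device is simpler and avoids all of this: it approximates $F$ and $G$ with the \emph{same} index $n$, but modifies the regularisation to $\widetilde{F_n}:=F_n\vee 0+2^{-n\alpha/2}$ (floor $2^{-n\alpha/2}$ rather than $2^{-n\alpha}$). This sacrifices half the decay in $\|\widetilde{F_n}-F\|_p$ up front, but in exchange gives the crude global bound $1/\widetilde{F_n}\leqslant 2^{n\alpha/2}$, which is exactly what is needed to control terms like $\|\tfrac{1}{\widetilde{F_n}}(G_n-G)\|_{p'}$ and $\|\tfrac{1}{\widetilde{F_n}}DG_n\|_{p'}$ directly---no splitting into $A$ and $A^c$ and no second index are required. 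Both arguments give $\|\tfrac{1}{\widetilde{F_n}}G_n-\tfrac{1}{F}G\|_{p'}\leqslant C2^{-n\alpha/2}$ and $\|\tfrac{1}{\widetilde{F_n}}G_n\|_{1,p'}\leqslant C2^{n(1-\alpha/2)}$, so the $K$-method conclusion is identical.

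For $k\geqslant 1$ the paper only treats $k=0$ explicitly and leaves the general case to the reader; your inductive scheme via $D((1/F)G)$ is a reasonable way to fill this in, though you should be aware that the product estimates in fractional spaces and the passage from $Du\in\mathbb{D}_{\gamma-}^{\infty-}$ to $u\in\mathbb{D}_{(\gamma+1)-}^{\infty-}$ require some care (Meyer equivalence plus the vector-valued extension of the base case). In fact, tracking your induction carefully seems to yield $k+\sigma/2^{k}$ rather than the weaker $k+\sigma/2^{k+1}$ stated in the theorem, so your ``halving at every further level'' remark is a little pessimistic; either way the stated conclusion follows.
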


\begin{proof}
Without loss of generality, we assume $0<\alpha<1$, $F\in{\mathcal E}_{\alpha}^{\infty-}$ and $G\in{\mathcal E}_{\alpha}^{\infty-}$
for simplicity. For every $p'>1$, let
$p,r,r'$ as in the proof of Theorem \ref{inverse}.
By Remark \ref{discrete}, for each $n$, we can find $F_n\in{\mathbb D}_1^p$, $G_n\in{\mathbb D}_1^r$ such that
\begin{eqnarray*}
\|F_n-F\|_p\leqslant C2^{-n\alpha},\quad\|F_n\|_{1,p}\leqslant C2^{n(1-\alpha)}
\end{eqnarray*}
and
\begin{eqnarray*}
\|G_n-G\|_r\leqslant C2^{-n\alpha},\quad\|G_n\|_{1,r}\leqslant C2^{n(1-\alpha)}.
\end{eqnarray*}
Let
\begin{eqnarray*}
\widetilde{F_n}:=F_n\vee 0+2^{-n\alpha/2}.
\end{eqnarray*}
Then $\widetilde{F_n}$ satisfies
\begin{eqnarray*}
\|\widetilde{F_n}-F\|_p\leqslant C2^{-n\alpha/2},\quad\|\widetilde{F_n}\|_{1,p}\leqslant C2^{n(1-\alpha/2)}.
\end{eqnarray*}
Therefore we have
\begin{eqnarray*}
\|\frac{1}{\widetilde{F_n}}\cdot G_n-\frac{1}{F}\cdot G\|_{p'}&\leqslant&
\|\frac{1}{\widetilde{F_n}}\cdot G-\frac{1}{F}\cdot G\|_{p'}+\|\frac 1 {\widetilde{F_n}}(G_n-G)\|_{p'}\\
&=:&\Lambda_1+\Lambda_2.
\end{eqnarray*}
By the same method as in the proof of Theorem \ref{inverse}, we have
\begin{eqnarray*}
\Lambda_1\leqslant C2^{-n\alpha/2}.
\end{eqnarray*}
By $1/\widetilde{F_n}\leqslant 2^{n\alpha/2}$ and $\|G_n-G\|_r\leqslant C2^{-n\alpha}$, we also have
\begin{eqnarray*}
\Lambda_2\leqslant\|G_n-G\|_r\cdot\|\frac{1}{\widetilde{F_n}}\|_{r}\leqslant C2^{-n\alpha/2}.
\end{eqnarray*}
Consequently we have
\begin{eqnarray*}
\|\frac{1}{\widetilde{F_n}}\cdot G_n-\frac{1}{F}\cdot G\|_{p'}\leqslant C2^{-n\alpha/2}.
\end{eqnarray*}
Similarly, by Meyer equivalence of norms,
we have
\begin{eqnarray*}
\|\frac{1}{\widetilde{F_n}}\cdot G_n\|_{1,p'}&\leqslant& C\{\|\frac{1}{\widetilde{F_n}}\cdot G_n\|_{p'}+\|D(\frac{1}{\widetilde{F_n}}\cdot G_n)\|_{p'}\}\\
&=&C\{\|\frac{1}{\widetilde{F_n}}\cdot G_n\|_{p'}+\|\frac{1}{\widetilde{F_n}^2}\cdot D\widetilde{F_n}\cdot G_n\|_{p'}
+\|\frac{1}{\widetilde{F_n}}\cdot DG_n\|_{p'}\}\\
&\leqslant&C\{\|\frac{1}{\widetilde{F_n}}\cdot G_n\|_{p'}+\|\frac{1}{\widetilde{F_n}^2}\cdot D\widetilde{F_n}\cdot (G_n-G)\|_{p'}\\
&&{}+\|\frac{1}{\widetilde{F_n}^2}\cdot D\widetilde{F_n}\cdot G\|_{p'}+\|\frac{1}{\widetilde{F_n}}\cdot DG_n\|_{p'}\}.
\end{eqnarray*}
Similar to the proof of Theorem \ref{inverse}, we have
\begin{eqnarray*}
\|\frac{1}{\widetilde{F_n}^2}\cdot D\widetilde{F_n}\cdot G\|_{p'}\leqslant C2^{n(1-\alpha/2)}.
\end{eqnarray*}
By $1/\widetilde{F_n}\leqslant 2^{n\alpha/2}$, $\|G_n-G\|_r\leqslant C2^{-n\alpha}$ and $\|D\widetilde{F_n}\|_p\leqslant C2^{n(1-\alpha/2)}$, we have
\begin{eqnarray*}
\|\frac{1}{\widetilde{F_n}^2}\cdot D\widetilde{F_n}\cdot (G_n-G)\|_{p'}&\leqslant&
\|\frac{1}{\widetilde{F_n}}\|_{r'}^2\cdot\|D\widetilde{F_n}\|_p\cdot\|G_n-G\|_r\\
&\leqslant& C2^{n(1-\alpha/2)}.
\end{eqnarray*}
By $1/\widetilde{F_n}\leqslant 2^{n\alpha/2}$, $\|G_n-G\|_r\leqslant C2^{-n\alpha}$ and $\|G_n\|_{1,r}\leqslant 2^{n(1-\alpha)}$, we also have
\begin{eqnarray*}
\|\frac{1}{\widetilde{F_n}}\cdot G_n\|_{p'}\leqslant \|\frac{1}{\widetilde{F_n}}\cdot (G_n-G)\|_{p'}+\|\frac{1}{\widetilde{F_n}}\cdot G\|_{p'}
\leqslant C,
\quad\|\frac{1}{\widetilde{F_n}}\cdot DG_n\|_{p'}\leqslant C2^{n(1-\alpha/2)}.
\end{eqnarray*}
Consequently, we have
\begin{eqnarray*}
\|\frac{1}{\widetilde{F_n}}\cdot G_n\|_{1,p'}\leqslant C2^{n(1-\alpha/2)}.
\end{eqnarray*}
By Remark \ref{discrete}, we obtain
\begin{eqnarray*}
\|\frac{1}{F}\cdot G\|_{{\mathcal E}_{\beta}^{p'}}<\infty,\quad\forall \beta<\alpha/2,
\end{eqnarray*}
and this completes the proof.
\end{proof}

The following theorem is a local version of Lemma 2.1 in \cite{W} which will play an essential role in Section 4.
Before proceeding, we introduce some notions and notations concerning the Sobolev space on ${\mathbb R}^d$
(cf. \cite{A} and \cite{T}). Define the family of Bessel potential spaces by
\begin{eqnarray*}
{\mathbb L}_{\alpha}^p({\mathbb R}^d)=(1-\Delta)^{-\alpha/2}({\mathbb L}^p({\mathbb R}^d)),\quad 1<p<\infty,\quad\alpha\in {\mathbb R}
\end{eqnarray*}
equipped with the norm
\begin{eqnarray*}
\|g\|_{\alpha,p}=\|(1-\Delta)^{\alpha/2}g\|_p.
\end{eqnarray*}
 For $1<p<\infty$, $\alpha\in{\mathbb R}$, we also define fractional order Sobolev spaces on ${\mathbb R}^d$ as follows:
\begin{eqnarray*}
{\mathcal E}_{\alpha}^{p}({\mathbb R}^d)=({\mathbb L}_{k}^p({\mathbb R}^d),{\mathbb L}_{k+1}^p({\mathbb R}^d))_{\sigma,p}
\quad\text{if $k+1$ be the smallest integer larger than $\alpha$}
\end{eqnarray*}
equipped with the norm
\begin{eqnarray*}
\|g\|_{{\mathcal E}_{\alpha}^{p}({\mathbb R}^d)}=[\int_0^{1}|\epsilon^{-\sigma}K(\epsilon,p,g)|^p\frac{d\epsilon}{\epsilon}]^{1/p},
\end{eqnarray*}
where $\sigma:=\alpha-k$ and
\begin{eqnarray*}
K(\epsilon,p,g):=\inf\{\|g_1\|_{k,p}+\epsilon\|g_2\|_{k+1,p},g_1+g_2=g,g_1,g_2\in{\mathbb L}_k^p({\mathbb R}^d)\}.
\end{eqnarray*}

\begin{theorem}\label{local version1}
Suppose that $F\in{\mathbb D}_1^{\infty-}({\mathbb R}^d)$, $G\in{\mathbb D}_1^{\infty-}$ taking values on $[0,1]$ with
\begin{eqnarray*}
1+E_G[\|D\ln G\|_{\mathbb H}^p]<\infty,\quad\text{for every}\quad p\geq 1,
\end{eqnarray*}
and the density $p_{F,G}$ is bounded. Then, for every $1<p<p'<\infty$ and $0\leqslant\alpha<\alpha'\leqslant1$,
there exists a positive constant $C=C(\alpha,\alpha',p,p')$ such that
\begin{eqnarray*}
\|g\circ F\cdot G\|_{{\mathcal E}_{\alpha}^p}\leqslant C\|g\|_{{\mathcal E}_{\alpha'}^{p'}({\mathbb R}^d)}\quad\text{for every}\quad g\in S({\mathbb R}^d).
\end{eqnarray*}
\end{theorem}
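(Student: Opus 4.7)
The strategy is to invoke the $K$-method on both the Wiener space and on $\mathbb{R}^d$, reducing the claim to two endpoint estimates for the nonlinear map $T:g\mapsto g\circ F\cdot G$. Any decomposition $g=g_0+g_1$ on $\mathbb{R}^d$ with $g_0\in\mathbb{L}^{p'}(\mathbb{R}^d)$ and $g_1\in\mathbb{L}_1^{p'}(\mathbb{R}^d)$ induces the admissible decomposition $T(g)=g_0\circ F\cdot G+g_1\circ F\cdot G$ for the Wiener $K$-functional $K(\epsilon,p,\cdot)$. So once I establish
\[
\|g_0\circ F\cdot G\|_p\leq C\,\|g_0\|_{p'}\quad\text{and}\quad \|g_1\circ F\cdot G\|_{1,p}\leq C\,\|g_1\|_{1,p'},
\]
taking the infimum on the right yields the pointwise control $K(\epsilon,p,T(g))\leq C\,K(\epsilon,p',g)$ for every $\epsilon\in(0,1]$.

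The first endpoint follows from $0\leq G\leq 1$, which gives $G^p\leq G$ and hence
\[
\|g_0\circ F\cdot G\|_p^p\leq E[|g_0\circ F|^p\,G]=\int_{\mathbb{R}^d}|g_0(y)|^p\,p_{F,G}(y)\,dy;
\]
H\"older with exponents $p'/p$ and $p'/(p'-p)$, combined with $\|p_{F,G}\|_\infty<\infty$ and $\int p_{F,G}\,dy=E[G]\leq 1$, yields the bound. For the second endpoint, Meyer's equivalence $\|h\|_{1,p}\sim\|h\|_p+\|Dh\|_p$ reduces the task to bounding $\|D(g_1\circ F\cdot G)\|_p$ (the $\|\cdot\|_p$ piece is handled by the first endpoint applied to $g_1$). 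The chain rule combined with $DG=G\cdot D\ln G$ gives
\[
D(g_1\circ F\cdot G)=(\nabla g_1\circ F)\cdot DF\cdot G+(g_1\circ F)\cdot G\cdot D\ln G,
\]
and each summand is treated by the same H\"older scheme: absorb $G^p$ into $G$, apply the density bound in the form $E_G[|h\circ F|^{p'}]\leq\|p_{F,G}\|_\infty\|h\|_{p'}^{p'}$ with $h=\nabla g_1$ or $h=g_1$, and soak up the residual factor using either $\|DF\|_{\mathbb{H}}\in\mathbb{L}^{\infty-}$ or the hypothesis $E_G[\|D\ln G\|_{\mathbb{H}}^q]<\infty$ for all $q$. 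Together with $\|g_1\|_{1,p'}\sim\|g_1\|_{p'}+\|\nabla g_1\|_{p'}$ this yields the second endpoint bound.

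Finally, from the pointwise inequality $K(\epsilon,p,T(g))\leq C\,K(\epsilon,p',g)$, the norm inequality follows by a H\"older estimate in the interpolation parameter: writing $\epsilon^{-\alpha}=\epsilon^{\alpha'-\alpha}\epsilon^{-\alpha'}$ and applying H\"older with exponents $p'/p$ and $p'/(p'-p)$ inside $\int_0^1(\cdot)^p\,d\epsilon/\epsilon$ produces
\[
\int_0^1\epsilon^{-\alpha p}K(\epsilon,p',g)^p\frac{d\epsilon}{\epsilon}\leq \left(\int_0^1\epsilon^{-\alpha'p'}K(\epsilon,p',g)^{p'}\frac{d\epsilon}{\epsilon}\right)^{p/p'}\left(\int_0^1\epsilon^{(\alpha'-\alpha)pp'/(p'-p)}\frac{d\epsilon}{\epsilon}\right)^{(p'-p)/p'},
\]
and the second factor is finite precisely because $\alpha'>\alpha$, delivering the stated estimate. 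The main technical obstacle is the level-one endpoint, where the bounded density $p_{F,G}$, the $\mathbb{L}^q$-moments of $\|DF\|_{\mathbb{H}}$, and the $E_G$-moments of $\|D\ln G\|_{\mathbb{H}}$ must be juggled through H\"older with exponents carefully matched to the gap $p'-p$; the boundedness of $p_{F,G}$ is the crucial ingredient that allows transferring $\mathbb{L}^{p'}(\mathbb{R}^d)$-control to $\mathbb{L}^p(\mu)$-control after composition with $F$ and multiplication by $G$.
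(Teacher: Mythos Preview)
Your proposal is correct and follows essentially the same route as the paper: establish the two endpoint estimates $\|g\circ F\cdot G\|_p\leq C\|g\|_{p'}$ and $\|g\circ F\cdot G\|_{1,p}\leq C\|g\|_{1,p'}$ via the bounded density $p_{F,G}$, Meyer's equivalence, and the identity $DG=G\cdot D\ln G$, deduce $K(\epsilon,p,g\circ F\cdot G)\leq C\,K(\epsilon,p',g)$, and finish with a H\"older inequality in the interpolation parameter exploiting $\alpha'>\alpha$. The only cosmetic difference is in how the H\"older splitting is arranged at the endpoints (the paper factors $G=G^{1/p'}\cdot G^{1-1/p'}$ before applying H\"older, whereas you first use $G^p\leq G$ and then apply H\"older under $\mu_G$), but the substance is identical.
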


\begin{proof}
Since $p_{F,G}$ is bounded, for every $p'>p$, we can find $C=C(p,p')>0$ such that
\begin{eqnarray}\label{1}
\|g\circ F\cdot G\|_p&\leqslant&\|g\circ F\cdot G^{1/p'}\|_{p'}\cdot\|G^{1-1/p'}\|_{pp'/(p'-p)}\nonumber\\
&=&[\int_{{\mathbb R}^d}|g(x)|^{p'}p_{F,G}(x)dx]^{1/p'}\cdot\|G^{1-1/p'}\|_{pp'/(p'-p)}\nonumber\\
&\leqslant& C\|g\|_{p'}.
\end{eqnarray}
Also, by the Meyer equivalence of norms, for every $p'>p$, we have
\begin{eqnarray*}
\|g\circ F\cdot G\|_{1,p}&=&\|(1-L)^{1/2}(g\circ F\cdot G)\|_p\\
&\leqslant&C\{\|g\circ F\cdot G\|_p+\|D(g\circ F\cdot G)\|_p\}\\
&\leqslant&C\{\|g\circ F\cdot G\|_p+\|\sum_{i=1}^d(\partial_ig)\circ F\cdot DF_i\cdot G\|_p+
\|g\circ F\cdot DG\|_p\}\\
&\leqslant&C\{\|g\circ F\cdot G\|_p+\|\sum_{i=1}^d(\partial_ig)\circ F\cdot DF_i\cdot G\|_p+
\|g\circ F\cdot D(\ln G)\cdot G\|_p\}\\
&\leqslant&C\{\|g\circ F\cdot G\|_p+\sum_{i=1}^d\|(\partial_ig)\circ F\cdot G\|_{p'}\cdot\|DF_i\|_{pp'/(p'-p)}\\
&&+\|g\circ F\cdot G^{1-((p'-p)/pp')}\|_{p'}\cdot\|D(\ln G)\cdot G^{(p'-p)/pp'}\|_{pp'/(p'-p)}\}.
\end{eqnarray*}
Then similarly as (\ref{1}), for every $p'>p$, we can find $C=C(p,p')$ such that
\begin{eqnarray}\label{2}
\|g\circ F\cdot G\|_{1,p}\leqslant C\|g\|_{1,p'}.
\end{eqnarray}
By (\ref{1}) and (\ref{2}), we get the desired estimate if $\alpha=0$ or $\alpha=1$. If $0<\alpha<1$, we
proceed as follows. Let $1<p<p'<\infty$ and $0<\alpha<\alpha'<1$ be fixed. For $g\in{\mathcal E}_{\alpha'}^{p'}({\mathbb R}^d)$,
by (\ref{1}) and (\ref{2}), we have
\begin{eqnarray*}
K(\epsilon,p,g\circ F\cdot G)\leqslant CK(\epsilon,p',g).
\end{eqnarray*}
Hence by H\"older inequality we obtain that
\begin{eqnarray*}
\|g\circ F\cdot G\|_{{\mathcal E}_{\alpha}^p}&=&\big[ \int_0^1 [\epsilon^{-\alpha} K(\epsilon,p,g\circ F\cdot G)]^p
\frac{d\epsilon}{\epsilon} \big]^{\frac{1}{p}}\\
&\leqslant &C\big[ \int_0^1 [\epsilon^{-\alpha} K(\epsilon,p',g)]^p
\frac{d\epsilon}{\epsilon} \big]^{\frac{1}{p}}\\
&= &C\big[ \int_0^1 [\epsilon^{-\alpha'p-\frac p {p'}} K(\epsilon,p',g)^p][\epsilon^{(\alpha'-\alpha)p-\frac {p'-p} {p'}}]
{d\epsilon} \big]^{\frac{1}{p}}\\
&\leqslant &C\big[ \int_0^1 [\epsilon^{-\alpha'} K(\epsilon,p',g)]^{p'}
\frac{d\epsilon}{\epsilon} \big]^{\frac{1}{p'}}\big[ \int_0^1 \epsilon^{\frac {(\alpha'-\alpha)p'p} {p'-p}}
\frac{d\epsilon}{\epsilon} \big]^{\frac{p'-p}{p'p}}\\
&\leqslant &C\|g\|_{{\mathcal E}_{\alpha'}^{p'}({\mathbb R}^d)}
\end{eqnarray*}
as desired.
\end{proof}

We shall need the following local version of the integration by parts formula.
\begin{lemma}\label{integration by parts}
Let $k\in{\mathbb N}$. Suppose
\begin{itemize}
  \item[(i)] $F\in{\mathbb D}_{k+2}^{\infty-}({\mathbb R}^d)$;
  \item[(ii)] $G\in{\mathbb D}_{k+1}^{\infty-}$, $A$ is a measurable set such that $G1_A=0$
      and for any $p> 1$,
      \begin{eqnarray*}
      E[|\det(\Sigma_F)|^{-p}1_{A^c}]<\infty.
      \end{eqnarray*}
\end{itemize}
Then for all $g\in C_p^{\infty}({\mathbb R}^d)$ and all multiindex $\alpha\in\{1,\cdots,d\}^{k+1}$, we have
\begin{eqnarray}
E[(\frac{\partial^{|\alpha|} g}{\partial y^{\alpha}})\circ F\cdot G]=E[g\circ F\cdot H_{\alpha}(F,G)]
\end{eqnarray}
where the elements $H_{\alpha}(F,G)\in {\mathbb D}^{\infty-}_{k+1-|\alpha|}$ are recursively given by
\begin{eqnarray*}
H_{(i)}(F,G)&=&H_i(F,G)\\
&:=&\sum_{j=1}^dD^*(\gamma_F^{ij}\cdot G\cdot DF_j)\\
&=&-\sum_{j=1}^d\{\gamma_F^{ij}\cdot LF_j\cdot G+(D\gamma_F^{ij},DF_j)_{\mathbb H}\cdot G+\gamma_F^{ij}\cdot(DG,DF_j)_{\mathbb H}\},\\
H_{\alpha}(F,G)&=&H_{(i_1,\cdots,i_k)}(F,G)\\
&:=&H_{i_k}(F,H_{(i_1,\cdots,i_{k-1})}(F,G)),
\end{eqnarray*}
which satisfy $H_{\alpha}(F,G)1_A=0$ and
\begin{eqnarray}
\|H_\alpha(F,G)\|_{k+1-|\alpha|,p}\leqslant C\|G\|_{k+1,p'}
\end{eqnarray}
for every $p<p'$.
\end{lemma}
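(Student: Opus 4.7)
The plan is to proceed by induction on $|\alpha|$, with the case $|\alpha|=1$ containing the essential content and higher-order cases following by iteration. For the base case the key localization ingredient is Theorem~\ref{inverse}: since $F\in\mathbb{D}_{k+2}^{\infty-}(\mathbb{R}^d)$, the Malliavin covariance entries $\sigma_F^{ij}=(DF^i,DF^j)_{\mathbb H}$ lie in $\mathbb{D}_{k+1}^{\infty-}$, and hence so do the nonnegative scalar $\det(\Sigma_F)$ and the cofactors $\mathrm{cof}_{ij}(\Sigma_F)$; multiplying a cofactor by $G\in\mathbb{D}_{k+1}^{\infty-}$ produces an element of $\mathbb{D}_{k+1}^{\infty-}$ that still vanishes on $A$. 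Applying the integer case of Theorem~\ref{inverse} at level $k+1$, with its ``$F$'' taken to be $\det(\Sigma_F)$ and its ``$G$'' taken to be $\mathrm{cof}_{ij}(\Sigma_F)\cdot G$, the globally defined product
\[
\gamma_F^{ij}\cdot G:=\frac{\mathrm{cof}_{ij}(\Sigma_F)\cdot G}{\det(\Sigma_F)}
\]
then belongs to $\mathbb{D}_{k+1}^{\infty-}$, with the quantitative estimate $\|\gamma_F^{ij}G\|_{k+1,p}\leq C\|G\|_{k+1,p'}$ coming from (\ref{estimate00}) of Remark~\ref{composite}. Tensoring with $DF_j\in\mathbb{D}_{k+1}^{\infty-}({\mathbb H})$ and then applying the divergence $D^*$, which is bounded from $\mathbb{D}_{k+1}^{p}({\mathbb H})$ into $\mathbb{D}_{k}^{p}$, delivers $H_i(F,G)\in\mathbb{D}_{k}^{\infty-}$ with the claimed norm bound; the explicit expansion stated in the lemma is just the divergence product rule $D^*(fu)=fD^*u-(Df,u)_{\mathbb H}$ together with $D^*(DF_j)=-LF_j$.

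For the integration by parts identity at $|\alpha|=1$, on $A^c$ the chain rule and $\Sigma_F\Gamma_F=I$ yield the pointwise relation $\sum_j\gamma_F^{ij}(DF_j,D(g\circ F))_{\mathbb H}=(\partial_i g)\circ F$. Multiplying by $G$ and observing that both sides of the resulting equality vanish on $A$, one obtains $G\cdot(\partial_i g)\circ F=\sum_j(\gamma_F^{ij}GDF_j,D(g\circ F))_{\mathbb H}$ on all of $\Omega$; taking expectations and using the duality between $D$ and $D^*$, which is legitimate since $\gamma_F^{ij}GDF_j\in\mathbb{D}_1^{\infty-}({\mathbb H})$, then gives the base-case formula. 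To verify that $H_i(F,G)1_A=0$, I would use the global form $H_i=-\sum_j\bigl[\gamma_F^{ij}G\cdot LF_j+(D(\gamma_F^{ij}G),DF_j)_{\mathbb H}\bigr]$: the first term is manifestly zero on $A$, while the second is zero there by the Bouleau--Hirsch-type fact that the Malliavin derivative of a functional in $\mathbb{D}_1^{\infty-}$ vanishes a.s.\ on the set where the functional itself vanishes, applied to $\gamma_F^{ij}G$ (which is zero on $A\subset\{\gamma_F^{ij}G=0\}$).

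The inductive step at level $m+1$ is a direct reapplication of the base case with $G$ replaced by $G':=H_{(i_1,\ldots,i_m)}(F,G)$: by the inductive hypothesis one has $G'\in\mathbb{D}_{k+1-m}^{\infty-}$, $G'1_A=0$, and $\|G'\|_{k+1-m,p}\leq C\|G\|_{k+1,p'}$, so the same reasoning (invoking the integer case of Theorem~\ref{inverse} now at level $k+1-m$ rather than $k+1$) produces $H_{i_{m+1}}(F,G')\in\mathbb{D}_{k-m}^{\infty-}$ with $H_{i_{m+1}}(F,G')1_A=0$ and the stated norm control, while iterating the base-case identity yields the identity for general $\alpha$. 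The one genuinely analytic step in the whole argument is the vanishing of $D(\gamma_F^{ij}G)$ on $A$; everything else is Sobolev bookkeeping built on top of Theorem~\ref{inverse} and Meyer's equivalence.
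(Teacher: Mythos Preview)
Your proposal is correct and follows essentially the same route as the paper's proof: both invoke the integer case of Theorem~\ref{inverse} to place $\gamma_F^{ij}\cdot G$ in $\mathbb{D}_{k+1}^{\infty-}$, establish the base-case identity via the chain rule on $A^c$ and the $D$--$D^*$ duality, verify $H_i(F,G)1_A=0$ from the localness of $D$, obtain the norm bound from the continuity of $D^*:\mathbb{D}_{k+1}^p(\mathbb{H})\to\mathbb{D}_k^p$, and then iterate. One minor correction: the estimate (\ref{estimate00}) you cite presupposes an auxiliary localizer $G_1$ equal to $1$ on $\{G\neq 0\}$ with $G_1 1_A=0$, which is not among the hypotheses of the lemma; the paper instead appeals to (\ref{estimate0}), which requires no such $G_1$ and already yields $\|\gamma_F^{ij}\cdot G\cdot DF_j\|_{k+1,p}\leqslant C\|G\|_{k+1,p'}$.
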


\begin{proof}
Since
\begin{eqnarray*}
(\det(\Sigma_F))^{-1}1_{A^c}\in {\mathbb L}^{\infty-} \quad \text{and}\quad \det(\Sigma_F)\in{\mathbb D}_{k+1}^{\infty-},
\end{eqnarray*}
by Theorem \ref{inverse}, we have
\begin{eqnarray*}
(\det(\Sigma_F))^{-1}G\in {\mathbb D}^{\infty-}_{k+1},
\end{eqnarray*}
which in turn yields that
\begin{eqnarray*}
\gamma_F^{ij}\cdot G\in {\mathbb D}^{\infty-}_{k+1}.
\end{eqnarray*}
By the chain rule we have
\begin{eqnarray*}
(\partial_i g)\circ F \cdot 1_{A^c}=\sum_{j=1}^d(D (g\circ F),DF_j)_{\mathbb H}\cdot \gamma_F^{ij}\cdot 1_{A^c},
\end{eqnarray*}
so we obtain
\begin{eqnarray*}
(\partial_i g)\circ F\cdot G=(D (g\circ F),\sum_{j=1}^d \gamma_F^{ij}\cdot G\cdot DF_j)_{\mathbb H}.
\end{eqnarray*}
Hence by the duality relationship between the derivative and the divergence operators, we get
\begin{eqnarray*}
E[(\partial_i g)\circ F\cdot G]=E[g\circ F\cdot H_i(F,G)],
\end{eqnarray*}
where
\begin{eqnarray*}
H_i(F,G)&=&\sum_{j=1}^dD^*(\gamma_F^{ij}\cdot G\cdot DF_j)\\
&=&-\sum_{j=1}^d\{\gamma_F^{ij}\cdot LF_j\cdot G+(D\gamma_F^{ij},DF_j)_{\mathbb H}\cdot G+
\gamma_F^{ij}\cdot(DG,DF_j)_{\mathbb H}\}.
\end{eqnarray*}
It follows by the localness of $D$ that $H_i(F,G)1_A=0$, and by \eqref{estimate0} and
the fact that  $D^*:{\mathbb D}_{1+\alpha}^p({\mathbb H})\rightarrow{\mathbb D}_{\alpha}^p$
is continuous for every $p$ and every $\alpha$, we see that for every $p<p'$,
\begin{eqnarray*}
\|H_i(F,G)\|_{k,p}\leqslant C\sum_{j=1}^d\|\gamma_F^{ij}\cdot G\cdot DF_j\|_{k+1,p}\leqslant C\|G\|_{k+1,p'}.
\end{eqnarray*}
Thus we have
\begin{eqnarray*}
E[(\partial_{i_1}\partial_{i_2}g)\circ F\cdot G]=E[g\circ F\cdot H_{i_2}(F,H_{i_1}(F,G))].
\end{eqnarray*}
Obviously we still have
$H_{i_2}(F,H_{i_1}(F,G))1_A=0$ and consequently we can go ahead further.
Moreover, it is easy to see that for every $p<p'<p''$,
\begin{eqnarray*}
\|H_{i_2}(F,H_{i_1}(F,G))\|_{k-1,p}\leqslant C\|H_{i_1}(F,G)\|_{k,p'}\leqslant C\|G\|_{k+1,p''}.
\end{eqnarray*}
By induction we  prove the desired results.
\end{proof}

\begin{remark}\label{localness}
In applications one usually takes $A=\{G= 0\}$. Note that by the localness of the derivative
operator one then has $\{D^\beta G\neq 0\}\subset A^c$ for any $\beta\in{\mathbb N}$.
\end{remark}

\begin{remark}\label{estimate4}
Under the conditions of Lemma \ref{integration by parts}, suppose that there exists $G_1\in{\mathbb D}_{k}^{\infty-}$ such that
$G_1=1$ on the set $\{G\neq 0\}$ and $G_11_A=0$, then for any $G'\in \mathbb D^{\infty-}_{k+1}$ and
$\varepsilon\geqslant 0$, we have $H_{\alpha}(F,G\cdot G')=H_{\alpha}(F,G\cdot G')\cdot G_1$,
\begin{eqnarray*}
\|H_i(F,G\cdot G')\|_{k-\varepsilon,p}\leqslant C\sum_{j=1}^d\|\gamma_F^{ij}\cdot G
\cdot G'\cdot DF_j\|_{k+1-\varepsilon,p}\leqslant C\|G'\|_{k+1-\varepsilon,p'}
\quad \text{\ for every\ }p<p'.
\end{eqnarray*}
Since $\gamma_F^{ij}\cdot G_1\in {\mathbb D}_{k}^{\infty-}$,
\begin{eqnarray*}
&&\|H_{(i_1,i_2)}(F,G\cdot G')\|_{k-1-\varepsilon,p}\\
&\leqslant& C\sum_{j=1}^d\|\gamma_F^{i_2j}\cdot H_{i_1}(F,G\cdot G')\cdot G_1\cdot DF_j\|_{k-\varepsilon,p}\\
&\leqslant& C\|H_{i_1}(F,G\cdot G')\|_{k-\varepsilon,p'}\leqslant C\|G'\|_{k+1-\varepsilon,p''}\quad \text{\ for every\ }p<p'<p''.
\end{eqnarray*}
Therefore by induction we have
\begin{eqnarray}
\|H_{\alpha}(F,G\cdot G')\|_{k+1-|\alpha|-\varepsilon,p}\leqslant C\|G'\|_{k+1-\varepsilon,p'}
\end{eqnarray}
for every $p<p'$.
\end{remark}

\section{Estimate of the difference between two Donsker's delta functions}
In this section, we establish an estimate of the difference between two Donsker's delta functionals. In what follows we also denote by $C$ a generic constant which can be different from one formula to another.
\begin{theorem}\label{comparison theorem}
Let $\delta=k+\sigma$, $k\in{\mathbb N}$, $0<\sigma\leqslant1$. Suppose that $H,H_1,H_2:B\rightarrow[0,1]$ and $F_1,F_2:B\rightarrow {\mathbb R}^d$ are
Wiener functionals such that
\begin{itemize}
  \item[(i)] $F_1,F_2\in{\mathbb D}_{2+\delta}^{\infty-}({\mathbb R}^d)$;
  \item[(ii)] $H,H_1,H_2\in{\mathbb D}_{k+2}^{\infty-}$ with
  \begin{eqnarray*}
  1+E_{H_2}[\|D\ln H_2\|_{\mathbb H}^p]<\infty\quad\text{for every}\quad p\geq1,
  \end{eqnarray*}
  such that $H_1=1$ on the set $\{H\neq0\}$ and $H_2=1$ on the set $\{H_1\neq0\}$;
  \item[(iii)] there is a measurable set $A$ such that $H_21_A=0$ and $F_1,F_2$ are nondegenerate a.s. on the set $A^c$.
\end{itemize}
Then for every $p$, $p'$, $p''$, $r_1$, $r_2$ and $r_3$ satisfying $1<p<p'<p''<\infty$, $r_1>pp'/(p'-p)$ and $r_2>r_3=p'p''/(p''-p')$, and for
\begin{eqnarray*}
0<\beta<\alpha\wedge(1+\delta)-1-\frac{d(p-1)}{p},\quad \beta+\frac{d(p''-1)}{p''}<\delta'<\delta,\quad \delta'\leqslant \alpha-1,
\end{eqnarray*}
we can find a positive constant $C$ which may depend on $F_1$, $F_2$, $\alpha$, $\beta$, $\delta$, $\delta'$, $p$, $p'$, $p''$, $r_1$ and $r_2$
such that
\begin{eqnarray*}
&&\|(1-\Delta)^{\beta/2}\delta_x\circ F_1\cdot H-(1-\Delta)^{\beta/2}\delta_x\circ F_2\cdot H\|_{-\alpha,p}\nonumber\\
&\leqslant& C\|F_1-F_2\|_{2+\delta',r_1}+C\|F_1-F_2\|_{1,r_2}+C\|F_1-F_2\|_{\delta',r_3}.
\end{eqnarray*}
\end{theorem}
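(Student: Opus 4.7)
My strategy combines duality, the affine interpolation $F_t:=F_2+t(F_1-F_2)$, localization via the cutoff $\Psi_1(R_{F_1,F_2})$ from \eqref{important result1}, and one Malliavin integration by parts. Since $\|\cdot\|_{-\alpha,p}=\sup\{|\mathbb{E}[\cdot\,V]|:\|V\|_{\alpha,p^*}\le 1\}$ with $1/p+1/p^*=1$, it is enough to dominate
$$\mathbb{E}[(T\circ F_1-T\circ F_2)\,H\,V],\qquad T:=(1-\Delta)^{\beta/2}\delta_x,$$
uniformly in $V$. I would first prove the inequality with a Schwartz test $g\in \mathcal{S}(\mathbb{R}^d)$ in place of $T$, keeping constants that depend only on $\|g\|_{\mathcal{E}_{-\alpha'}^{q'}(\mathbb{R}^d)}$ for suitably chosen $(\alpha',q')$, and then pass to the distributional statement by density; the fact that $(1-\Delta)^{\beta/2}\delta_x$ lies in $\mathcal{E}_{-\alpha'}^{q'}(\mathbb{R}^d)$ precisely under the stated Sobolev--embedding restrictions $\beta<\alpha\wedge(1+\delta)-1-d(p-1)/p$ and $\beta+d(p''-1)/p''<\delta'$ is what fixes these constraints.

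For smooth $g$, expand
$$g\circ F_1-g\circ F_2=\sum_{i=1}^d\int_0^1(\partial_i g)\circ F_t\cdot(F_1^i-F_2^i)\,dt,$$
and insert both the partition $1=\Psi_1(R_{F_1,F_2})+(1-\Psi_1(R_{F_1,F_2}))$ and the nesting $H=H\cdot H_1\cdot H_2$ from hypothesis (ii). On the ``good'' part weighted by $\Psi_1(R_{F_1,F_2})\cdot H_2$, \eqref{important result1} gives a uniform--in--$t$ lower bound on $\det\Sigma_{F_t}$, so Lemma \ref{integration by parts} applies with $F=F_t$, $G=(F_1^i-F_2^i)HV\Psi_1H_2$ and bad set $A=\{\Psi_1(R_{F_1,F_2})H_2=0\}$ (the localness of $D$ ensuring $G1_A=0$). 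One IBP replaces $(\partial_i g)\circ F_t$ by $g\circ F_t\cdot H_i(F_t,G)$; pairing $g$ with the density $p_{F_t,H_i(F_t,G)}$ and invoking the local compositional estimate Theorem \ref{local version1} (whose nondegeneracy hypothesis is verified via Proposition \ref{bally} and Theorem \ref{inverse} applied to $\det\Sigma_{F_t}^{-1}$ on the support of $\Psi_1H_2$) reduces matters to bounding $\|H_i(F_t,G)\|_{k,p}\le C\|(F_1^i-F_2^i)V\|_{k+1,p'}$ via Remark \ref{estimate4}. Distributing the $k+1\approx 2+\delta'$ Leibniz derivatives between $F_1-F_2$ and $V$ then produces, in one configuration, the contribution $\|F_1-F_2\|_{2+\delta',r_1}$ (all derivatives on $F_1-F_2$) and, in another via fractional interpolation that leaves $V$ saturated at regularity $\alpha$, the contribution $\|F_1-F_2\|_{\delta',r_3}$.

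On the ``bad'' part, $R_{F_1,F_2}>\tfrac14$ on $\{(1-\Psi_1(R_{F_1,F_2}))\neq 0\}$ yields the pointwise bound
$$1_{\{(1-\Psi_1(R_{F_1,F_2}))\neq 0\}}\le C\,\|D(F_1-F_2)\|_{\mathbb H}^2\cdot\frac{(1+\|\Sigma_{F_1}\|_2^2)^{(d-1)/2}}{\det\Sigma_{F_1}}.$$
Applying this to the two terms $T\circ F_i\,HV(1-\Psi_1(R_{F_1,F_2}))$ separately and using H\"older, one factor of $\|D(F_1-F_2)\|_{\mathbb H}\le C\|F_1-F_2\|_{1,r_2}$ is extracted from each term, and the remaining factors, involving only $F_i$ and the nondegeneracy data, are controlled uniformly by Theorem \ref{local version1} together with Proposition \ref{bally} applied individually to each Donsker's functional. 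This accounts for the middle summand.

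The principal obstacle is the fractional--regularity accounting in the good part: with only one Malliavin IBP available (in order to preserve the $(F_1-F_2)$ factor in $G$), one must show after Theorem \ref{local version1} and the Meyer commutator that the remaining Sobolev slack matches exactly the negative Bessel norm of $(1-\Delta)^{\beta/2}\delta_x$ on $\mathbb{R}^d$; this rigidity is what forces $\delta'\le\alpha-1$ and the precise choices of $r_1>pp'/(p'-p)$ and $r_2>r_3=p'p''/(p''-p')$. A secondary difficulty is checking, at each invocation of Lemma \ref{integration by parts}, that the Wiener weight vanishes on the degeneracy set, for which the nested cutoffs $H\subset H_1\subset H_2$ and the localness of $D$ (Remarks \ref{localness} and \ref{estimate4}) play an essential role.
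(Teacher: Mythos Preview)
Your proposal assembles the right ingredients (duality, the cutoff $\Psi/\Psi_1(R_{F_1,F_2})$, the interpolated path $F_t$, local integration by parts, and a Chebyshev estimate on the bad set), but the \emph{order} in which you apply them differs from the paper's and, as written, leaves a genuine gap in the good-part argument and misidentifies the source of the $\|F_1-F_2\|_{2+\delta',r_1}$ term.

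In the paper, the first step is \emph{not} the mean value theorem but a separate integration by parts on $F_1$ and on $F_2$ (Lemma~\ref{integration by parts}), which produces the decomposition
\[
g\circ F_1\,H_i(F_1,HG)-g\circ F_2\,H_i(F_2,HG)
=\big[g\circ F_1-g\circ F_2\big]H_i(F_1,HG)+g\circ F_2\big[H_i(F_1,HG)-H_i(F_2,HG)\big].
\]
The term $\|F_1-F_2\|_{2+\delta',r_1}$ comes from the \emph{second} bracket via Lemma~\ref{key estimate}, i.e.\ from $\gamma_{F_1}-\gamma_{F_2}$ and $DF_1-DF_2$; it is not obtained by a Leibniz expansion of $(F_1-F_2)V$ inside a single $H_i(F_t,\cdot)$. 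Your one IBP with $G=(F_1^i-F_2^i)HV\Psi_1H_2$ yields at best a $\|G\|_{1+\delta',\cdot}$-type control (Remark~\ref{estimate4}), hence $\|F_1-F_2\|_{1+\delta',\cdot}$, not $2+\delta'$; and the partner term requires $\|V\|_{1+\delta',\cdot}$, which is only available if $1+\delta'\le\alpha$---consistent with the constraint $\delta'\le\alpha-1$, but not producing $\|F_1-F_2\|_{\delta',r_3}$ in the form stated.

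The second and more serious gap is your handling of $g\circ F_t$ after the IBP. You propose ``pairing $g$ with the density $p_{F_t,H_i(F_t,G)}$'' and invoking Theorem~\ref{local version1}. But Theorem~\ref{local version1} is a \emph{positive}-order pull-back estimate ($0\le\alpha<\alpha'\le1$); it does not control $E[g\circ F_t\cdot M]$ when $g$ is approximated by $(1-\Delta)^{\beta/2}\delta_x\in\mathbb{L}_{-\delta''}^{p'''}(\mathbb{R}^d)$, which lives in a genuinely negative-order space. For that you need Lemma~\ref{local version2}, whose hypothesis (ii) requires the localization weight to sit in $\mathbb{D}_{k+1}^{\infty-}$ with an outer cutoff satisfying the $D\ln$-condition. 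The paper meets this by doing the IBP \emph{first} (so the only weights at the MVT stage are $H_1\Psi$ and $H_2\Psi_1$, not $V$), and it explicitly remarks after Lemma~\ref{comparison lemma} that this order is forced precisely to ensure $\Psi(R_{F_1,F_2})\cdot H_1$ satisfies condition (ii) of Lemma~\ref{local version2}. In your order the weight $H_i(F_t,G)$ contains $V\in\mathbb{D}_\alpha^{p^*}$, which is not in $\mathbb{L}^{\infty-}$, and you have also consumed $\Psi_1$ in the partition so no larger cutoff remains to play the role of $G_1$ in Lemma~\ref{local version2}. Reworking your argument to follow the paper's order---IBP on $F_1,F_2$ separately, then Laplacian decomposition, then cutoff $\Psi$ with outer cutoff $\Psi_1 H_2$, then MVT at level $-\delta'$ using Lemma~\ref{local version2}---closes both gaps.
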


\begin{remark}
The expression $(1-\Delta)^{\beta/2}\delta_x\circ F$ is well-defined. We refer the reader to \cite[Definition 2.1]{W} for more details.
\end{remark}

\begin{remark}
Since $p_{F,G}(x)=E[1_{\{F>x\}}H_{(1,\cdots,d)}(F,G)]$ (see \cite[Proposition 2.1.5]{N}), by \eqref{important result1}, if $k \geqslant d-1$ then
the conditions (iii), $F_1,F_2\in{\mathbb D}_{k+2}^{\infty-}({\mathbb R}^d)$, $H_2\in {\mathbb D}_{k+1}^{\infty-}$ and $H_21_A=0$ imply that
\begin{enumerate}\label{bally remark}
  \item for $i=1,2$, the densities $p_{F_i,H_2}$ of the laws of $F_i$ under $\mu_{H_2}$ are bounded;
  \item for every $1<p<\infty$, the density $p_{F_2+t(F_1-F_2),H_2\cdot\Psi_1(R_{F_1,F_2})}$ is bounded, uniformly in $t\in[0,1]$.
\end{enumerate}
Recently, Bally and Caramellino \cite{BC1} have proved that any non-degenerated functional which is twice differentiable in
Malliavin sense has a bounded  density (see Proposition \ref{bally}). Using this result, the conditions (1) and (2) are
automatically satisfied under the conditions of Theorem \ref{comparison theorem} (see \cite[Example 2.3]{BC2} for $\Psi_1$
satisfying the condition \eqref{bally example}).
This fact will play a very important role in the proof of Theorem \ref{comparison theorem}.
\end{remark}

Now we state the following corollary which is more refined than
\cite[Proposition 2.2]{BC2}.
\begin{corollary}
In the circumstance of Theorem \ref{comparison theorem} we have
\begin{eqnarray*}
\|p_{F_1,H}-p_{F_2,H}\|_{C^\beta({\mathbb R}^d)}\leqslant C\|F_1-F_2\|_{2+\delta',r_1\vee r_2},
\end{eqnarray*}
\end{corollary}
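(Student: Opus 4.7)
The plan is to combine the realization of the density as a generalized expectation of Donsker's delta functional with Theorem \ref{comparison theorem}, and then lift the resulting pointwise control on $(1-\Delta)^{\beta/2}(p_{F_1,H}-p_{F_2,H})$ into a $C^\beta$-bound via the embedding $(1-\Delta)^{-\beta/2}(C^0({\mathbb R}^d))\subset C^\beta({\mathbb R}^d)$ recalled in Section 2.

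Concretely, first write the densities as $p_{F_i,H}(x)=E[\delta_x\circ F_i\cdot H]$ in the sense of generalized Wiener functionals, so that
\[
(1-\Delta)^{\beta/2}(p_{F_1,H}-p_{F_2,H})(x)=E\bigl[(1-\Delta)^{\beta/2}\delta_x\circ F_1\cdot H-(1-\Delta)^{\beta/2}\delta_x\circ F_2\cdot H\bigr].
\]
The interchange of the $x$-variable Laplacian with the expectation is legitimate and can be verified by pairing against an arbitrary $\phi\in S({\mathbb R}^d)$, using that $(1-\Delta)^{\beta/2}\delta_x\circ F_i$ is well defined as an element of $\mathbb{D}_{-\alpha}^p$ (see \cite[Definition 2.1]{W}) and depends continuously on $x$.

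Next, I would view $E[\,\cdot\,]$ as the dual pairing against the constant $1\in\mathbb{D}_\alpha^{p'}$, where $p'$ is the conjugate exponent of $p$, whose norm equals $1$. This gives the pointwise estimate
\[
\bigl|(1-\Delta)^{\beta/2}(p_{F_1,H}-p_{F_2,H})(x)\bigr|\leq\|(1-\Delta)^{\beta/2}\delta_x\circ F_1\cdot H-(1-\Delta)^{\beta/2}\delta_x\circ F_2\cdot H\|_{-\alpha,p},
\]
and Theorem \ref{comparison theorem} dominates the right-hand side by $C(\|F_1-F_2\|_{2+\delta',r_1}+\|F_1-F_2\|_{1,r_2}+\|F_1-F_2\|_{\delta',r_3})$. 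Since $1\leq 2+\delta'$ and $\delta'\leq 2+\delta'$ while $r_1, r_2, r_3\leq r_1\vee r_2$ (recalling $r_3<r_2$), the monotonicity of the Sobolev scale on Wiener space collapses the three terms into $C\|F_1-F_2\|_{2+\delta',r_1\vee r_2}$, uniformly in $x$.

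This uniform bound shows that $(1-\Delta)^{\beta/2}(p_{F_1,H}-p_{F_2,H})$ belongs to $C^0({\mathbb R}^d)$ with supremum norm controlled by the same quantity, whereupon the embedding from Section 2 converts the estimate directly into the claimed $C^\beta$-bound. The argument is essentially mechanical given Theorem \ref{comparison theorem}; the only point deserving care is the interchange of $(1-\Delta)^{\beta/2}$ with the generalized expectation and the continuity in $x$ of the resulting function, both of which follow from Watanabe's construction of $(1-\Delta)^{\beta/2}\delta_x\circ F_i$ as an $x$-continuous curve in $\mathbb{D}_{-\alpha}^p$.
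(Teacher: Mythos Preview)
Your proposal is correct and follows essentially the same route as the paper: express the densities as generalized expectations of Donsker's delta functionals, pass $(1-\Delta)^{\beta/2}$ inside, bound the difference pointwise via Theorem \ref{comparison theorem} and duality with the constant $1$, collapse the three terms using $r_3<r_2$ and monotonicity of the Sobolev scale, and finish with the embedding $(1-\Delta)^{-\beta/2}(C^0({\mathbb R}^d))\subset C^\beta({\mathbb R}^d)$. The only quibble is notational: you use $p'$ for the conjugate exponent of $p$, whereas in Theorem \ref{comparison theorem} the symbol $p'$ already denotes a different quantity, so it would be cleaner to write $q=p/(p-1)$ as the paper does elsewhere.
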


\begin{proof}
Since $p_{F_i,H}(x)=E_H[\textbf{1}\cdot\delta_x\circ F_i]$, we have
\begin{eqnarray*}
\|(1-\Delta)^{\beta/2}(p_{F_1,H}(x)-p_{F_2,H}(x))\|_{C^0({\mathbb R}^d)}\leqslant C\|F_1-F_2\|_{2+\delta',r_1\vee r_2}.
\end{eqnarray*}
Then the conclusion follows from the fact that $(1-\Delta)^{-{\beta}/{2}}(C^0({\mathbb R}^d))\subset C^{\beta}({\mathbb R}^d)$.
\end{proof}

To prove the theorem we need some preparations.
In what follows for the simplicity of notations sometimes we shall drop the notation of $\sum$ if
there is no confusion. We begin with the following lemma which can be found in Watanabe \cite{W}.
\begin{lemma}\label{w1}
If $\alpha>d(p-1)/p$, then the map $x\in{\mathbb R}^d\rightarrow(1-\Delta)^{-\alpha/2}
\delta_x\in \mathbb{L}^p({\mathbb R}^d)$ is bounded and continuous.
\end{lemma}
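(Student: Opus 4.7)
The plan is to identify the object $(1-\Delta)^{-\alpha/2}\delta_x$ with a translate of the Bessel kernel and then reduce the two assertions (boundedness in $x$ and continuity in $x$) to standard properties of that kernel. Concretely, let $G_\alpha$ denote the inverse Fourier transform of $\xi\mapsto(1+|\xi|^2)^{-\alpha/2}$, so that for each $x\in{\mathbb R}^d$ one has, as tempered distributions,
\begin{eqnarray*}
(1-\Delta)^{-\alpha/2}\delta_x(\cdot)=G_\alpha(\cdot-x).
\end{eqnarray*}
Once this identification is made, the map in question becomes $x\mapsto\tau_x G_\alpha$, where $\tau_x$ denotes translation by $x$, and both claims will reduce to facts about the single function $G_\alpha$.

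For the boundedness, I would invoke translation invariance of Lebesgue measure to write $\|\tau_x G_\alpha\|_{\mathbb{L}^p(\mathbb{R}^d)}=\|G_\alpha\|_{\mathbb{L}^p(\mathbb{R}^d)}$, which is then independent of $x$. The key step is to verify that $G_\alpha\in \mathbb{L}^p(\mathbb{R}^d)$ under the hypothesis $\alpha>d(p-1)/p$. For this I would appeal to the classical asymptotic description of the Bessel kernel (Stein, \emph{Singular Integrals}, Ch.~V, or Aronszajn--Smith): $G_\alpha$ is strictly positive, smooth away from the origin, and decays like $e^{-c|x|}$ as $|x|\to\infty$, so the behavior at infinity poses no problem; near the origin one has, for $0<\alpha<d$, the equivalent
\begin{eqnarray*}
G_\alpha(x)\sim c_{\alpha,d}|x|^{\alpha-d}\qquad\text{as }|x|\to 0.
\end{eqnarray*}
Thus $\int_{|x|\leqslant 1}|G_\alpha(x)|^p\,dx$ is comparable to $\int_0^1 r^{p(\alpha-d)+d-1}\,dr$, which is finite precisely when $p(\alpha-d)+d>0$, i.e.\ $\alpha>d(p-1)/p$. (When $\alpha\geqslant d$, $G_\alpha$ is bounded or logarithmically singular, and the $\mathbb{L}^p$ bound is immediate from the exponential decay.) This yields $\|\tau_x G_\alpha\|_{p}=\|G_\alpha\|_{p}<\infty$ uniformly in $x$.

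For continuity I would simply invoke the strong continuity of the translation group on $\mathbb{L}^p(\mathbb{R}^d)$ for $1\leqslant p<\infty$: for any $h\in \mathbb{L}^p(\mathbb{R}^d)$ one has $\|\tau_x h-\tau_y h\|_p\to 0$ as $x\to y$. Applying this to $h=G_\alpha$, which belongs to $\mathbb{L}^p$ by the previous step, finishes the proof. The only real obstacle is establishing the sharp local singularity $|x|^{\alpha-d}$ of $G_\alpha$; this is where the threshold $d(p-1)/p$ comes from, and I would simply cite the standard reference rather than reproving it. Everything else is routine.
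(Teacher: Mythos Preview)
Your argument is correct and is essentially the standard one: identify $(1-\Delta)^{-\alpha/2}\delta_x$ with the translated Bessel kernel $G_\alpha(\cdot-x)$, use the known local asymptotic $G_\alpha(y)\sim c|y|^{\alpha-d}$ together with exponential decay at infinity to get $G_\alpha\in\mathbb{L}^p$ precisely when $\alpha>d(p-1)/p$, and then invoke translation invariance and strong continuity of translations on $\mathbb{L}^p$. The paper does not actually prove this lemma; it simply cites Watanabe \cite{W}, so there is no ``paper's proof'' to compare against --- but what you have written is exactly the argument underlying the cited result, and nothing is missing.
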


The following lemma is a local version of Lemma 2.2 in \cite{W}.
\begin{lemma}\label{local version2}
Let $\delta=k+\sigma$, $k\in{\mathbb N}$, $0<\sigma\leqslant 1$. Suppose that $G:B\rightarrow {\mathbb R}$ and
$F:B\rightarrow {\mathbb R}^d$ be Wiener functionals such that
\begin{itemize}
  \item[(i)]$F\in{\mathbb D}_{2+\delta}^{\infty-}({\mathbb R}^d)$;
  \item[(ii)]$G\in{\mathbb D}_{k+1}^{\infty-}$, $G_1\in{\mathbb D}_{k+1}^{\infty-}$ taking values on $[0,1]$ with
  \begin{eqnarray*}
  1+E_{G_1}[\|D\ln G_1\|_{\mathbb H}^p]<\infty\quad\text{for every}\quad p\geq1,
  \end{eqnarray*}
  such that $G_1=1$ on the set $\{G\neq 0\}$;
  \item[(iii)]there is a measurable set $A$ such that $G_11_A=0$ and $F$ is nondegenerate a.s. on the set $A^c$.
\end{itemize}
Then, for every $1<p<p'<\infty$ and $0<\delta'<\widetilde{\delta}\leqslant\delta$,
we can find a positive constant $C=C(\tilde{\delta},\delta',p,p')$ such that
\begin{eqnarray*}
\|g\circ F\cdot G\|_{-\widetilde{\delta},p}\leqslant C\|g\|_{-\delta',p'}\quad\text{for every}\quad g\in S({\mathbb R}^d).
\end{eqnarray*}
\end{lemma}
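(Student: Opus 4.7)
The plan is to follow Watanabe's proof of \cite[Lemma 2.2]{W}, with the global integration-by-parts step replaced by its local counterpart Lemma \ref{integration by parts}. With $q:=p/(p-1)$ and $q':=p'/(p'-1)$, Wiener-space duality reduces the task to bounding
$$|E[g\circ F\cdot G\phi]|\le C\|g\|_{-\delta',p'}\|\phi\|_{\widetilde\delta,q}$$
uniformly for $\phi\in\mathbb D^q_{\widetilde\delta}$. Writing $g=(1-\Delta)^{\delta'/2}h$ with $\|h\|_{p'}=\|g\|_{-\delta',p'}$ and setting $T\phi(y):=E[G\phi\mid F=y]\cdot p_F(y)$, Fubini gives $E[g\circ F\cdot G\phi]=\int_{\mathbb R^d}h(y)(1-\Delta)^{\delta'/2}T\phi(y)\,dy$, so by H\"older the whole estimate reduces to the ``dual'' smoothness bound $\|T\phi\|_{\delta',q'}\le C\|\phi\|_{\widetilde\delta,q}$.

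The heart of the argument is the claim that $\|T\phi\|_{m,q'}\le C\|\phi\|_{m,q}$ for every integer $m\le k+1$. Fix such an $m$ and a multiindex $\alpha$ with $|\alpha|\le m$; by density we may take $\phi\in\mathbb D^{\infty-}_{k+1}$ and extend by continuity. Lemma \ref{integration by parts} applied with weight $G\phi$ identifies the distributional derivative $\partial^\alpha T\phi(y)=(-1)^{|\alpha|}E[H_\alpha(F,G\phi)\mid F=y]\cdot p_F(y)$. The density $p_F$ is bounded by Proposition \ref{bally} applied to $F$ with weight $H_2$ (whose hypotheses are delivered by (ii) and the nondegeneracy of $F$ on $A^c$), hence $\|\partial^\alpha T\phi\|_{q'}\le C\|H_\alpha(F,G\phi)\|_{0,q'}$ via Euclidean-to-Wiener transfer. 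Remark \ref{estimate4} with $G'=\phi$ and $\varepsilon=k+1-m$ then yields $\|H_\alpha(F,G\phi)\|_{m-|\alpha|,q'}\le C\|\phi\|_{m,r}$ for any $r\in(q',q]$; combining with $\|H\|_{0,q'}\le C\|H\|_{m-|\alpha|,q'}$ (boundedness of $(1-L)^{-(m-|\alpha|)/2}$ on $\mathbb L^{q'}$) and $\|\phi\|_{m,r}\le\|\phi\|_{m,q}$ (probability measure) gives the integer claim.

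For fractional $\delta'<\widetilde\delta\le\delta\le k+1$, complex interpolation of the integer bounds between $m_0=[\widetilde\delta]$ and $m_1=[\widetilde\delta]+1$ (both $\le k+1$), using the identities $[\mathbb L^{q'}_{m_0}(\mathbb R^d),\mathbb L^{q'}_{m_1}(\mathbb R^d)]_\theta=\mathbb L^{q'}_{\widetilde\delta}(\mathbb R^d)$ and $[\mathbb D^q_{m_0},\mathbb D^q_{m_1}]_\theta=\mathbb D^q_{\widetilde\delta}$ with $\theta=\widetilde\delta-m_0$, yields $\|T\phi\|_{\widetilde\delta,q'}\le C\|\phi\|_{\widetilde\delta,q}$; then $\|T\phi\|_{\delta',q'}\le\|T\phi\|_{\widetilde\delta,q'}$ (Bessel-potential embedding, since $\widetilde\delta>\delta'$) closes the argument. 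The main anticipated obstacle is the exponent threading in the integer step, where one must pick $r\in(q',q]$, making essential use of the hypothesis $p<p'$; this is the reason the strict inequality $p<p'$ appears in the conclusion. The local nondegeneracy enters only through the identity $H_\alpha(F,G\phi)1_A=0$ (Remark \ref{localness}) and through the boundedness of $p_F$, both encoded in the hypotheses via the auxiliary localizers $H_1,H_2$ and the set $A$.
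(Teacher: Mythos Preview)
Your approach is correct in outline but takes a genuinely different route from the paper's. The paper stays on the ``primal'' side: it applies Lemma~\ref{integration by parts} and Remark~\ref{estimate4} to obtain, for each $m\le k+1$,
\[
\|(\partial_{i_1}\cdots\partial_{i_m}g)\circ F\cdot G\|_{-\widetilde\delta,p}\le C\|g\circ F\cdot G_1\|_{m-\widetilde\delta,p'},
\]
then writes $g$ explicitly as a finite combination of derivatives of $(1-\Delta)^{-\lceil(k+1)/2\rceil}g$, and finishes with a single appeal to Theorem~\ref{local version1} at fractional order $k+1-\widetilde\delta\in[0,1)$. Your argument is the dual of this: you package everything into the operator $T\phi=p_{F,G\phi}$, prove integer bounds $\|T\phi\|_{m,q'}\le C\|\phi\|_{m,q}$ directly, and then complex-interpolate. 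The paper's method never needs interpolation of Wiener Sobolev spaces, only the $K$-method estimate encoded in Theorem~\ref{local version1}; your method bypasses Theorem~\ref{local version1} entirely but leans on the identities $[\mathbb D^q_{m_0},\mathbb D^q_{m_1}]_\theta=\mathbb D^q_{\widetilde\delta}$ and $[\mathbb L^{q'}_{m_0}(\mathbb R^d),\mathbb L^{q'}_{m_1}(\mathbb R^d)]_\theta=\mathbb L^{q'}_{\widetilde\delta}(\mathbb R^d)$.

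Two points of your write-up need repair. First, there is no $H_2$ in this lemma; the localizing weight is $G_1$. More importantly, $F$ is only nondegenerate on $A^c$, so the unweighted density $p_F$ need not exist, let alone be bounded. Your definition $T\phi(y)=E[G\phi\mid F=y]\cdot p_F(y)$ and the claim ``$p_F$ is bounded'' are therefore ill-posed as stated. The fix is to set $T\phi:=p_{F,G\phi}$ directly (this density exists since $G\phi$ vanishes on $A$) and to run the transfer using $p_{F,G_1}$: from $H_\alpha(F,G\phi)=H_\alpha(F,G\phi)\cdot G_1$ (Remark~\ref{estimate4}) one gets, for any $h\in L^{p'}(\mathbb R^d)$,
\[
\Big|\int h\,\partial^\alpha T\phi\Big|=\big|E[h(F)\,H_\alpha(F,G\phi)\,G_1]\big|\le\|p_{F,G_1}\|_\infty^{1/p'}\,\|h\|_{p'}\,\|H_\alpha(F,G\phi)\|_{q'},
\]
and Proposition~\ref{bally} (with weight $G_1$, whose logarithmic derivative is $p$-integrable by hypothesis~(ii)) supplies $\|p_{F,G_1}\|_\infty<\infty$. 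With this correction your integer step goes through, and the rest of your interpolation argument is sound.
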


\begin{proof}
By Lemma \ref{integration by parts}, we obtain for $G'\in{\mathbb D}_{k+1}^{\infty-}$,
\begin{eqnarray*}
E[(\partial_{i_1}\cdots\partial_{i_m}g)\circ F\cdot G\cdot G']=E[g\circ F\cdot H_{(i_1,\cdots, i_m)}(F,G\cdot G')].
\end{eqnarray*}
Taking $\varepsilon=k+1-\widetilde{\delta}$, by Remark \ref{estimate4}, we can find, for every $1<q'<q$,
$\widetilde{\delta}>0$ and $m=0,1,\cdots,k+1$, a positive constant
$C=C(q,q',\widetilde{\delta},m;F,G,G_1,G')$ such that
\begin{eqnarray*}
\|H_{(i_1,\cdots, i_m)}(F,G\cdot G')\|_{\widetilde{\delta}-m,q'}\leqslant C\|G'\|_{\widetilde{\delta},q}.
\end{eqnarray*}
Consequently, if $1/p+1/q=1$ and $1/p'+1/q'=1$, we have
\begin{eqnarray}\label{intermediate results1}
&&\|(\partial_{i_1}\cdots\partial_{i_m}g)\circ F\cdot G\|_{-\widetilde{\delta},p}\nonumber\\
&=&\sup\{|E[(\partial_{i_1}\cdots\partial_{i_m}g)\circ F\cdot G\cdot G']|:\|G'\|_{\widetilde{\delta},q}\leqslant 1\}\nonumber\\
&\leqslant&\sup\{|E[g\circ F\cdot H_{(i_1,\cdots, i_m)}(F,G\cdot G')]|:\|H_{(i_1,\cdots, i_m)}(F,G\cdot G')\|_{\widetilde{\delta}-m,q'}\leqslant C\}\nonumber\\
&=&\sup\{|E[g\circ F\cdot H_{(i_1,\cdots, i_m)}(F,G\cdot G')\cdot G_1]|:\|H_{(i_1,\cdots, i_m)}(F,G\cdot G')\|_{\widetilde{\delta}-m,q'}\leqslant C\}\nonumber\\
&\leqslant& C\|g\circ F\cdot G_1\|_{m-\widetilde{\delta},p'}\quad\text{for}\quad m=0,\cdots,k+1.
\end{eqnarray}
The rest of the proof is the same as that of \cite{W} and we give it for reader's convenience.
If $k=2l-1$ is odd, any $g\in S({\mathbb R}^d)$ can be written as
\begin{eqnarray*}
g=\sum_{n=0}^l\Sigma'\pm\partial_{i_1}\cdots\partial_{i_{2n}}(1-\Delta)^{-l}g,
\end{eqnarray*}
where $\Sigma'$ is a certain sum over indices $(i_1,\cdots,i_{2n})$. Hence, by (\ref{intermediate results1}), we have
\begin{eqnarray*}
&&\|g\circ F\cdot G\|_{-\widetilde{\delta},p}\\
&\leqslant& C\sum_{n=0}^l\|(1-\Delta)^{-l}g\circ F\cdot G_1\|_{2n-\widetilde{\delta},p'}\\
&\leqslant& C\|(1-\Delta)^{-(k+1)/2}g\circ F\cdot G_1\|_{k+1-\widetilde{\delta},p'}.
\end{eqnarray*}
Since
\begin{eqnarray*}
0\leqslant1-\sigma=k+1-\widetilde{\delta}<1,
\end{eqnarray*}
using Remark \ref{bally remark} (since $p_{F,G_1}$ is bounded) and Theorem \ref{local version1}
we have for every $0<\delta'<\widetilde{\delta}$ and $p''>p'$ with $k+1-\delta'<1$,
\begin{eqnarray*}
\|g\circ F\cdot G\|_{-\widetilde{\delta},p}\leqslant C\|(1-\Delta)^{-(k+1)/2}g\|_{k+1-\delta',p''}=C\|g\|_{-\delta',p''}.
\end{eqnarray*}
This yields the desired estimate, since $\delta'$ and $p''$ can be chosen arbitrarily close to $\widetilde{\delta}$ and $p$.
Similar arguments applies for even $k$ and we refer to \cite[Lemma 2.2]{W} for details. The proof is  completed.
\end{proof}

We also need the following technical lemma.
\begin{lemma}\label{key estimate}
Let $H,H_1\in B\rightarrow[0,1]$ and $F_1,F_2\in B\rightarrow{\mathbb R}^d$ satisfy all the conditions (i), (ii) and (iii) of
Theorem \ref{comparison theorem}.
Then, for every $\delta',r,r',r''$ satisfying $0<\delta'<{\delta}$,
$r'>r>1$ and $r''>r'r/(r'-r)$, we can find a
positive constant $C=C(F_1,F_2,\delta',r,r',r'')$ such that
\begin{eqnarray*}
\|H_i(F_1,H\cdot G)-H_i(F_2,H\cdot G)\|_{\delta',r}\leqslant C\|G\|_{1+\delta',r'}\cdot\|F_1-F_2\|_{2+\delta',r''},
\end{eqnarray*}
where $G\in{\mathbb D}_{1+\delta}^{\infty-}$.
\end{lemma}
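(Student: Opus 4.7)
The plan is to start from the explicit representation $H_i(F,HG) = \sum_{j=1}^d D^{*}(\gamma_F^{ij}\cdot HG\cdot DF^j)$ and use the continuity of the Skorohod operator $D^{*}: \mathbb{D}_{1+\delta'}^{r}(\mathbb{H}) \to \mathbb{D}_{\delta'}^{r}$ (as already invoked in the proof of Lemma~\ref{integration by parts}) to reduce the claim to a bound of
\[
\sum_{j=1}^d \bigl\|\gamma_{F_1}^{ij}\, HG\, DF_1^j - \gamma_{F_2}^{ij}\, HG\, DF_2^j\bigr\|_{1+\delta',\, r}.
\]
I would then split the summand via $ab - a'b' = (a-a')b + a'(b-b')$ with $a_m = \gamma_{F_m}^{ij}$, $b_m = DF_m^j$, producing
\[
\mathrm{I} := (\gamma_{F_1}^{ij} - \gamma_{F_2}^{ij})\, HG\, DF_1^j, \qquad \mathrm{II} := \gamma_{F_2}^{ij}\, HG\, D(F_1^j - F_2^j),
\]
and treat each piece separately.

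The decisive localization step exploits the cascade of cutoffs in hypothesis (ii): since $H_2 = 1$ on $\{H_1\neq 0\}\supset\{H\neq 0\}$, one has $H = H\cdot H_2^n$ for every $n\in\mathbb{N}$ (the extra factors equal $1$ where $H\neq 0$ and are multiplied by $0$ elsewhere). Combined with $H_2 1_A = 0$, the assumption $F_m\in\mathbb{D}_{2+\delta}^{\infty-}$ yields $\det\Sigma_{F_m}\in\mathbb{D}_{1+\delta}^{\infty-}$ with $(\det\Sigma_{F_m})^{-1}1_{A^c}\in\mathbb{L}^{\infty-}$, so Theorem~\ref{inverse} (applied with $\alpha=1+\delta$ and the functional $H_2\in\mathbb{D}_{k+2}^{\infty-}$) gives
\[
\gamma_{F_m}^{ab}\cdot H_2 \in \mathbb{D}_{1+\delta'}^{\infty-} \quad \text{for every } \delta'<\delta,\ m\in\{1,2\},\ a,b\in\{1,\ldots,d\}.
\]
For $\mathrm{II}$ I would rewrite $\mathrm{II} = (\gamma_{F_2}^{ij}H_2)\cdot H\cdot G\cdot D(F_1^j-F_2^j)$ and apply the product/Meyer-type estimates in fractional Sobolev spaces together with Hölder's inequality with a splitting $1/r = 1/q + 1/r' + 1/r''$ — feasible precisely because $r'' > r'r/(r'-r)$ leaves a positive $1/q$ — obtaining
\[
\|\mathrm{II}\|_{1+\delta',\, r} \leq C\, \|\gamma_{F_2}^{ij}H_2\|_{1+\delta',\, q}\, \|H\|_{1+\delta',\, q}\, \|G\|_{1+\delta',\, r'}\, \|F_1-F_2\|_{2+\delta',\, r''},
\]
in which the first two factors depend only on $F_2$ and $H$ and are absorbed into the final constant $C(F_1,F_2,\delta',r,r',r'')$.

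For $\mathrm{I}$, I would invoke the matrix identity $\Gamma_{F_1}-\Gamma_{F_2} = \Gamma_{F_1}(\Sigma_{F_2}-\Sigma_{F_1})\Gamma_{F_2}$ together with
\[
\sigma_{F_2}^{kl} - \sigma_{F_1}^{kl} = \bigl(D(F_2^k-F_1^k),\, DF_2^l\bigr)_{\mathbb{H}} + \bigl(DF_1^k,\, D(F_2^l-F_1^l)\bigr)_{\mathbb{H}},
\]
and distribute two factors of $H_2$ via $H = H\cdot H_2^2$ to the two inverses, yielding
\[
\mathrm{I} = \sum_{k,l}(\gamma_{F_1}^{ik}H_2)\,(\sigma_{F_2}^{kl}-\sigma_{F_1}^{kl})\,(\gamma_{F_2}^{lj}H_2)\cdot H\cdot G\cdot DF_1^j.
\]
The $\sigma$-difference, being linear in $D(F_1-F_2)$ coupled with a $DF_m$, has its $(1+\delta',r'')$-norm dominated by $C(F_1,F_2)\cdot\|F_1-F_2\|_{2+\delta',\, r''}$; the same Hölder and Meyer-type calculus as for $\mathrm{II}$, combined with the Sobolev regularity of $\gamma_{F_m}^{ab}H_2$ supplied by Theorem~\ref{inverse}, then delivers the claimed estimate. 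The main technical hurdle is precisely the bookkeeping of indices: the strict inequality $\delta'<\delta$ is forced by the non-integer case of Theorem~\ref{inverse} (which loses an $\varepsilon$ of smoothness in passing from $\det\Sigma_F$ to its inverse), and the strict inequality $r''>r'r/(r'-r)$ is exactly the slack the Hölder chain needs to accommodate the third, $F_1,F_2$-dependent factor that is absorbed into $C(F_1,F_2,\delta',r,r',r'')$.
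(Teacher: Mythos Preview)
Your proposal is correct and follows essentially the same route as the paper: reduce via the continuity of $D^{*}$ to a $\|\cdot\|_{1+\delta',r}$-bound, split the difference as $(\gamma_{F_1}^{ij}-\gamma_{F_2}^{ij})HG\,DF_1^j + \gamma_{F_2}^{ij}HG\,D(F_1^j-F_2^j)$, handle the first piece through the resolvent identity $\Gamma_{F_1}-\Gamma_{F_2}=\Gamma_{F_1}(\Sigma_{F_2}-\Sigma_{F_1})\Gamma_{F_2}$, and invoke Theorem~\ref{inverse} for the localized regularity of $\gamma_{F_m}^{ij}$ times a cutoff. The only cosmetic differences are that the paper distributes the cutoffs as $\gamma_{F_1}^{ik}\cdot H$ and $\gamma_{F_2}^{lj}\cdot H_1$ (rather than your $\gamma_{F_m}^{ab}\cdot H_2$ via $H=H\cdot H_2^2$) and writes out a more elaborate chain of H\"older exponents, but the substance is identical.
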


\begin{proof}
In what follows, $r,m,n,m',m'^c,r',r'^c,r'',r''^c,r''^{cc}$ satisfy
\begin{eqnarray*}
\frac{1}{m}+\frac{1}{n}=\frac{1}{r},\frac{1}{m'}+\frac{1}{m'^c}=\frac{1}{m},
\frac{1}{r'}+\frac{1}{r'^c}=\frac{1}{n},
\frac{1}{r''^c}+\frac{1}{r''^{cc}}=\frac{1}{m'^c},
m'<r''.
\end{eqnarray*}
Since the Malliavin covariance matrix $\Sigma_{F_1}$ satisfies that
$\sigma_{F_1}^{ij}\in {\mathbb D}_{1+\delta}^{\infty-}$,
and the space ${\mathbb D}_{1+\delta}^{\infty-}$ is an algebra, so $\det(\Sigma_{F_1})\in {\mathbb D}_{1+\delta}^{\infty-}$,
then by Theorem \ref{inverse}, we have
\begin{eqnarray*}
(\det(\Sigma_{F_1}))^{-1}\cdot H\in {\mathbb D}_{(1+\delta)-}^{\infty-},
\end{eqnarray*}
and we deduce easily from this that $\gamma_{F_1}^{ij}\cdot H\in {\mathbb D}_{(1+\delta)-}^{\infty-}$. Since
\begin{eqnarray*}
H_i(F_1,H\cdot G)=\sum_{j=1}^dD^*(\gamma_{F_1}^{ij}\cdot H\cdot G\cdot DF_1^j)
\end{eqnarray*}
and
\begin{eqnarray*}
H_i(F_2,H\cdot G)=\sum_{j=1}^dD^*(\gamma_{F_2}^{ij}\cdot H\cdot G\cdot DF_2^j),
\end{eqnarray*}
using the fact that $D^*:{\mathbb D}_{1+\alpha}^p({\mathbb H})\rightarrow{\mathbb D}_{\alpha}^p$ is continuous for every $p$
and $\alpha$, we have
\begin{eqnarray*}
\|H_i(F_1,H\cdot G)-H_i(F_2,H\cdot G)\|_{\delta',r}&\leqslant&C\|\gamma_{F_1}^{ij}\cdot H\cdot G\cdot DF_1^j-
\gamma_{F_2}^{ij}\cdot H\cdot G\cdot DF_2^j\|_{1+\delta',r}\\
&\leqslant&C\|\gamma_{F_1}^{ij}\cdot H\cdot G\cdot DF_1^j-\gamma_{F_2}^{ij}\cdot H\cdot G\cdot DF_1^j\|_{1+\delta',r}\\
&&{}+C\|\gamma_{F_2}^{ij}\cdot H\cdot G\cdot DF_1^j-\gamma_{F_2}^{ij}\cdot H\cdot G\cdot DF_2^j\|_{1+\delta',r}\\
&=:&\Lambda_1+\Lambda_2.
\end{eqnarray*}
We first observe that
\begin{eqnarray*}
(\Sigma_{F_2})^{-1}-(\Sigma_{F_1})^{-1}=(\Sigma_{F_1})^{-1}(\Sigma_{F_1}-\Sigma_{F_2})(\Sigma_{F_2})^{-1}.
\end{eqnarray*}
Therefore we have
\begin{eqnarray*}
\gamma_{F_2}^{ij}-\gamma_{F_1}^{ij}=\sum_{k,l=1}^d\gamma_{F_1}^{ik}(\sigma_{F_1}^{kl}-\sigma_{F_2}^{kl})
\gamma_{F_2}^{lj}.
\end{eqnarray*}
In view of $\gamma_{F_1}^{ij}\cdot H\in{\mathbb D}_{(1+\delta)-}^{\infty-}$, $\gamma_{F_2}^{ij}\cdot H_1\in
{\mathbb D}_{(1+\delta)-}^{\infty-}$, $F_1\in{\mathbb D}_{2+\delta}^{\infty-}({\mathbb R}^d)$,
$F_2\in{\mathbb D}_{2+\delta}^{\infty-}({\mathbb R}^d)$, we obtain
\begin{eqnarray}\label{B1}
\Lambda_1&\leqslant&C\|G\cdot DF_1^j\|_{1+\delta',n}\cdot \|\gamma_{F_2}^{ij}\cdot H-\gamma_{F_1}^{ij}\cdot H\|_{1+\delta',m}\nonumber\\
&\leqslant& C\|G\|_{1+\delta',r'}\cdot\|DF_1^j\|_{1+\delta',r'^c}\cdot\|\sigma_{F_1}^{kl}-
\sigma_{F_2}^{kl}\|_{1+\delta',m'}\cdot\|\gamma_{F_1}^{ik}\cdot H\|_{1+\delta',r''^c}\cdot
\|\gamma_{F_2}^{lj}\cdot H_1\|_{1+\delta',r''^{cc}}\nonumber\\
&\leqslant& C\|G\|_{1+\delta',r'}\cdot\|\sigma_{F_1}^{kl}-\sigma_{F_2}^{kl}\|_{1+\delta',m'}\nonumber\\
&\leqslant& C\|G\|_{1+\delta',r'}\cdot\|F_1-F_2\|_{2+\delta',r''}.
\end{eqnarray}
For the second term, in view of $\gamma_{F_2}^{ij}\cdot H\in{\mathbb D}_{(1+\delta)-}^{\infty-}$, we obtain
\begin{eqnarray}\label{B2}
\Lambda_2&\leqslant& C\|\gamma_{F_2}^{ij}\cdot H\cdot G\|_{1+\delta',n}\cdot\|DF_1^j-DF_2^j\|_{1+\delta',m}\nonumber\\
&\leqslant& C\|\gamma_{F_2}^{ij}\cdot H\|_{1+\delta',r'^c}\cdot\|G\|_{1+\delta',r'}\cdot\|DF_1^j-DF_2^j\|_{1+\delta',m}\nonumber\\
&\leqslant& C\|G\|_{1+\delta',r'}\cdot\|F_1-F_2\|_{2+\delta',m}.
\end{eqnarray}
Hence combining (\ref{B1}) and (\ref{B2}), we have
\begin{eqnarray*}
\|H_i(F_1,H\cdot G)-H_i(F_2,H\cdot G)\|_{\delta',r}\leqslant C\|G\|_{1+\delta',r'}\cdot\|F_1-F_2\|_{2+\delta',r''}.
\end{eqnarray*}
Thus the proof is completed.
\end{proof}

With the above preparation we can establish the following results which will play a crucial role
in the proof of Theorem \ref{comparison theorem}.
\begin{lemma}\label{comparison lemma}
Let $H,H_1,H_2\in B\rightarrow[0,1]$ and $F_1,F_2\in B\rightarrow{\mathbb R}^d$ satisfy all the conditions (i), (ii) and (iii) of
Theorem \ref{comparison theorem}. Then, for every $0<\delta''<\delta'<
\delta$ and $p$, $p'$, $p''$, $p'''$, $r_1$, $r_2$ satisfying $1<p<p'<p''
<p'''<\infty$, $r_1>pp'/(p'-p)$ and $r_2>r_3=p'p''/(p''-p')$,
we can find a positive constant $C$ such that
\begin{eqnarray*}
&&\|g\circ F_1\cdot H-g\circ F_2\cdot H\|_{-(1+{\delta'}),p}\nonumber\\
&\leqslant& C\|g\|_{-(1+\delta''),p''}\cdot\|F_1-F_2\|_{2+\delta',r_1}+C\|g\|_{-\delta',p'''}
\cdot\|F_1-F_2\|_{1,r_2}\nonumber\\
&&{}+C\|g\|_{-\delta'',p'''}\cdot\|F_1-F_2\|_{\delta',r_3}
\end{eqnarray*}
for every $g\in S({\mathbb R}^d)$. The positive constant $C$ may depend on $F_1$, $F_2$,
$\delta'$, $\delta''$, $p$, $p'$, $p''$, $p'''$, $r_1$, $r_2$.
\end{lemma}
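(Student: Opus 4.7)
The plan is to apply the fundamental theorem of calculus along the path $F_t:=F_2+t(F_1-F_2)$, $t\in[0,1]$, writing
\[
g\circ F_1\cdot H-g\circ F_2\cdot H
= \sum_{i=1}^d(F_1^i-F_2^i)\,H\int_0^1(\partial_i g)(F_t)\,dt,
\]
and to reduce the estimation of $\|\cdot\|_{-(1+\delta'),p}$ by duality: pairing with $G'\in\mathbb D_{1+\delta'}^{q}$, $\|G'\|_{1+\delta',q}\leqslant 1$, $1/p+1/q=1$, it suffices to control
\[
\int_0^1 \bigl|E[(\partial_i g)(F_t)\,(F_1^i-F_2^i)\,H\,G']\bigr|\,dt.
\]

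The obstruction is that $F_t$ is typically \emph{not} nondegenerate even though $F_1,F_2$ are, so Lemma~\ref{integration by parts} does not apply off the shelf to the $F_t$-weighted integrand. To overcome this, I would insert the partition of unity $\Psi(R_{F_1,F_2})+(1-\Psi(R_{F_1,F_2}))\equiv 1$. On the localized piece carrying $\Psi(R_{F_1,F_2})$, the inclusion \eqref{important result} guarantees nondegeneracy of $F_t$ uniformly in $t$ on the support of the Malliavin weight, so Lemma~\ref{integration by parts} applies (with $A=\{(F_1^i-F_2^i)HG'\Psi(R_{F_1,F_2})=0\}$) and produces
\[
E\bigl[g(F_t)\,H_i\bigl(F_t,(F_1^i-F_2^i)HG'\Psi(R_{F_1,F_2})\bigr)\bigr].
\]
By Meyer's duality together with Lemma~\ref{local version2} (applied to $g\circ F_t\cdot H_1$, which is legitimate since the nested cutoff hypothesis forces $H_1\equiv 1$ on the support of the Malliavin weight), this is bounded by
\[
C\|g\|_{-(1+\delta''),p''}\cdot\bigl\|H_i(F_t,\cdots)\,H_1\bigr\|_{1+\delta'',q''}.
\]
Expanding the explicit formula for $H_i$ and running the bookkeeping of Lemma~\ref{key estimate}, the derivative $D(F_1^i-F_2^i)$ arising in the $(DX,DF_t^j)$-term, together with the extra $1+\delta''$ derivatives accumulated by Meyer's equivalence applied to the entire weight, produces precisely the factor $\|F_1-F_2\|_{2+\delta',r_1}$; this yields the first summand.

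For the residual piece carrying $1-\Psi(R_{F_1,F_2})$ integration by parts is unavailable, and I would instead use the pointwise bound $1-\Psi(R_{F_1,F_2})\leqslant C\,R_{F_1,F_2}^{N}$ for any $N\geqslant 1$. Combined with the $L^{\infty-}$-control of $(\det\Sigma_{F_1})^{-1}\mathbf 1_{A^c}$ coming from hypothesis~(iii), this furnishes arbitrarily many copies of $\|D(F_1-F_2)\|_{\mathbb H}^{2N}$, which I would distribute by H\"older's inequality between a factor $\|F_1-F_2\|_{1,r_2}$ (arising by extracting exactly one first-order derivative) and a factor $\|F_1-F_2\|_{\delta',r_3}$ (extracted via real-interpolation between $\mathbb L^{r_3}$ and $\mathbb D_1^{r_3}$). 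The residual factor $(\partial_i g)(F_t)$ is then bounded by Lemma~\ref{local version2} after, if necessary, the supplementary Taylor expansion $(\partial_i g)(F_t)=(\partial_i g)(F_1)-\sum_j\int_t^1(\partial_i\partial_j g)(F_s)(F_1^j-F_2^j)\,ds$ so as to reduce to the genuinely nondegenerate functional $F_1$. This produces the two remaining summands $\|g\|_{-\delta',p'''}\|F_1-F_2\|_{1,r_2}$ and $\|g\|_{-\delta'',p'''}\|F_1-F_2\|_{\delta',r_3}$.

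The main obstacle will be the delicate interlocking of the Sobolev exponents $(p,p',p'',p''',r_1,r_2,r_3)$ and regularities $(\delta',\delta'',\delta)$ so that Meyer's equivalence, H\"older's inequality and the successive invocations of Lemmas~\ref{local version2} and~\ref{key estimate} all close up simultaneously; in particular, one must verify at each step that the supports and localness properties of the cutoffs $H,H_1,H_2,\Psi(R_{F_1,F_2})$ are preserved under the chain and Leibniz rules so that Lemma~\ref{integration by parts} and Remark~\ref{estimate4} remain applicable throughout.
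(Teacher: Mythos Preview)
Your overall strategy---mean value theorem along $F_t=F_2+t(F_1-F_2)$ first, then integration by parts with $F_t$ on the $\Psi$-localized piece---runs into a genuine obstruction that the paper circumvents by reversing the order of these two operations.

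The problem is in your bound for the $\Psi$-piece. After integrating by parts against $F_t$ you land on
\[
E\bigl[g(F_t)\,H_i\bigl(F_t,(F_1^i-F_2^i)HG'\Psi(R_{F_1,F_2})\bigr)\bigr],
\]
and you want to extract the factor $\|g\|_{-(1+\delta''),p''}$ via Lemma~\ref{local version2}. By duality this forces you to control $\|H_i(F_t,\,\cdot\,)\|_{1+\delta'',q''}$, which in turn (by the continuity of $D^*$ and the structure of $H_i$) requires the argument $(F_1^i-F_2^i)HG'\Psi$ in $\mathbb D_{2+\delta''}^{\,\cdot}$. But $G'$ is only a test function in $\mathbb D_{1+\delta'}^{q}$ with $1+\delta'<2+\delta''$; you simply do not have enough derivatives available on $G'$ to close this estimate. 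The best you can actually obtain from your scheme is a bound of the form $\|g\|_{-\delta'',p'''}\cdot\|F_1-F_2\|_{1+\delta',\,\cdot}$, which is not the first term in the lemma and is not dominated by the other two either.

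The paper avoids this by performing the integration by parts \emph{with $F_1$ and $F_2$ separately} (not with $F_t$), before any mean value expansion. This produces the difference $H_i(F_1,H\cdot G')-H_i(F_2,H\cdot G')$, and Lemma~\ref{key estimate} shows that this difference is controlled by $\|G'\|_{1+\delta',\,\cdot}\cdot\|F_1-F_2\|_{2+\delta',r_1}$: the $2+\delta'$ derivatives fall on $F_1-F_2$ through the comparison of $\gamma_{F_1}$ with $\gamma_{F_2}$ and of $DF_1$ with $DF_2$, \emph{not} on the test function. Only afterwards---at the level of the lower-order quantity $\|g\circ F_1\cdot H_1-g\circ F_2\cdot H_1\|_{-\delta',p'}$, where just $\delta'$ derivatives are in play---does the paper insert the $\Psi/(1-\Psi)$ splitting and invoke the mean value theorem on the $\Psi$-piece, yielding the third term $\|g\|_{-\delta'',p'''}\cdot\|F_1-F_2\|_{\delta',r_3}$. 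The $(1-\Psi)$-piece is then handled without any path interpolation at all: one bounds $\|g\circ F_i\cdot H_1\|_{-k',p''}$ directly via Lemma~\ref{local version2} (each $F_i$ is nondegenerate on $A^c$) and estimates $\|H_2\cdot(1-\Psi(R))\|_{k',r_3}$ by a Chebyshev-type bound on $P(\{\Psi\neq1\}\cap\{H_2\neq0\})$, giving the second term. The paper even flags this ordering explicitly (see the remark immediately following the proof).

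Your treatment of the $(1-\Psi)$-piece has a related difficulty: you still carry the derivative $(\partial_i g)(F_t)$, and your proposed ``supplementary Taylor expansion'' reintroduces $(\partial_i\partial_j g)(F_s)$ on a region where the cutoff---and hence the nondegeneracy of $F_s$---is no longer available. In the paper's arrangement, by the time the $(1-\Psi)$-piece appears the integrand has already been reduced to $\partial_{i_2}(1-\Delta)^{-1}g$, which is one order smoother and can be estimated against $F_1$ and $F_2$ directly without any further interpolation along $F_t$.
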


\begin{proof}
In what follows, $q=p/(p-1)$, $q'=p'/(p'-1)$, $q''=p''/(p''-1)$.
First using the integration by parts formula, we have for $G\in{\mathbb D}_{1+\delta}^{\infty-}$,
\begin{eqnarray*}
&&|E[(\partial_ig)\circ F_1\cdot H\cdot G-(\partial_ig)\circ F_2\cdot H\cdot G]|\\
&=&|E[g\circ F_1\cdot H_1\cdot H_i(F_1,H\cdot G)-g\circ F_2\cdot H_1\cdot H_i(F_2,H\cdot G)]|\\
&\leqslant& |E[g\circ F_1\cdot H_1\cdot H_i(F_1,H\cdot G)-g\circ F_2\cdot H_1\cdot H_i(F_1,H\cdot G)]|\\
&&{}+|E[g\circ F_2\cdot H_1\cdot H_i(F_1,H\cdot G)-g\circ F_2\cdot H_1\cdot H_i(F_2,H\cdot G)]|\\
&\leqslant& \|g\circ F_1\cdot H_1-g\circ F_2\cdot H_1\|_{-\delta',p'}\cdot\|H_i(F_1,H\cdot G)\|_{\delta',q'}\\
&&{}+\|g\circ F_2\cdot H_1\|_{-\delta',p'}\cdot\|H_i(F_1,H\cdot G)-H_i(F_2,H\cdot G)\|_{\delta',q'}.
\end{eqnarray*}
Since
\begin{eqnarray*}
H_i(F_1,H\cdot G)=\sum_{j=1}^dD^*(\gamma_{F_1}^{ij}\cdot H\cdot G\cdot DF_1^j),
\end{eqnarray*}
by $\gamma_{F_1}^{ij}\cdot H\in {\mathbb D}_{(1+{\delta})-}^{\infty-}$ and $D^*:{\mathbb D}_{1+\alpha}^{p}({\mathbb H})
\rightarrow {\mathbb D}_{\alpha}^p$ is continuous for every $p$ and every $\alpha$, the map
$G\rightarrow H_i(F_1,H\cdot G)$ can be extended to a continuous operator ${\mathbb D}_{(1+{\delta})-}^q\rightarrow
{\mathbb D}_{{\delta}-}^{q-}:=\bigcap_{q'<q}\mathbb D_{{\delta}-}^{q'}$ for every $q$ and every ${\delta}$, that is, we can find, for every $1<q'<q$,
$0<\delta'<{\delta}$, a positive constant $C=C(q,q',\delta';F_1)$ such that
\begin{eqnarray*}
\|H_i(F_1,H\cdot G)\|_{\delta',q'}\leqslant C\|G\|_{1+{\delta}',q}.
\end{eqnarray*}
Then by Lemma \ref{key estimate}, we have
\begin{eqnarray}\label{intermediate results}
&&\|(\partial_ig)\circ F_1\cdot H-(\partial_ig)\circ F_2\cdot H\|_{-(1+{\delta'}),p}\nonumber\\
&=&\sup\{|E[(\partial_ig)\circ F_1\cdot H\cdot G-(\partial_ig)\circ F_2\cdot H\cdot G]|:\|G\|_{1+{\delta'},q}\leqslant 1\}\nonumber\\
&\leqslant&\sup\{\|g\circ F_1\cdot H_1-g\circ F_2\cdot H_1\|_{-\delta',p'}\cdot\|H_i(F_1,H\cdot G)\|_{\delta',q'}:
\|H_i(F_1,H\cdot G)\|_{\delta',q'}\leqslant C\}\nonumber\\
&&\quad{}+\sup\{\|g\circ F_2\cdot H_1\|_{-\delta',p'}\cdot\|H_i(F_1,H\cdot G)-H_i(F_2,H\cdot G)\|_{\delta',q'}:
\|G\|_{1+{\delta'},q}\leqslant 1\}\nonumber\\
&\leqslant& C\|g\circ F_1\cdot H_1-g\circ F_2\cdot H_1\|_{-\delta',p'}+C\|g\circ F_2\cdot H_1\|_{-\delta',p'}\cdot\|F_1-F_2\|_{2+\delta',r_1}.
\end{eqnarray}
Since any $g\in S({\mathbb R}^d)$ can be written in the form
\begin{eqnarray*}
g=\sum_{n=0}^1\Sigma'\pm\partial_{i_1}\cdots\partial_{i_{2n}}(1-\Delta)^{-1}g,
\end{eqnarray*}
where $\Sigma'$ is a certain sum over indices $(i_1,\cdots,i_{2n})$,
by (\ref{intermediate results}), we have
\begin{eqnarray*}
&&\|g\circ F_1\cdot H-g\circ F_2\cdot H\|_{-(1+{\delta'}),p}\\
&=&\|\sum_{n=0}^1\Sigma'\pm\partial_{i_1}\cdots\partial_{i_{2n}}
(1-\Delta)^{-1}g\circ F_1\cdot H\\
&&-\sum_{n=0}^1\Sigma'\pm\partial_{i_1}\cdots\partial_{i_{2n}}(1-\Delta)^{-1}g\circ F_2\cdot H\|_{-(1+{\delta'}),p}\\
&\leqslant& C\Sigma'\|\partial_{i_2}(1-\Delta)^{-1}g\circ F_1\cdot H_1-\partial_{i_2}(1-\Delta)^{-1}
g\circ F_2\cdot H_1\|_{-\delta',p'}\\\
&&{}+C\Sigma'\|\partial_{i_2}(1-\Delta)^{-1}g\circ F_2\cdot H_1\|_{-\delta',p'}\cdot\|F_1-F_2\|_{2+\delta',r_1}\\
&=:&\Xi_1+\Xi_2.
\end{eqnarray*}
By Lemma \ref{local version2} and the ${\mathbb L}_{\alpha}^p({\mathbb R}^d)$ boundedness of $\partial_{i_2}(1-\Delta)^{-\frac 1 2}$, we have
\begin{eqnarray*}
\|\partial_{i_2}(1-\Delta)^{-1}g\circ F_2\cdot H_1\|_{-\delta',p'}
\leqslant C\|\partial_{i_2}(1-\Delta)^{-1}g\|_{-\delta'',p''}\leqslant C\|g\|_{-(1+\delta''),p''}.
\end{eqnarray*}
Then we obtain
\begin{eqnarray}\label{t1}
\Xi_2\leqslant C\|g\|_{-(1+\delta''),p''}\cdot\|F_1-F_2\|_{2+\delta',r_1}.
\end{eqnarray}
Now we deal with the first term $\Xi_1$. We split $\Xi_1$ into two parts
\begin{eqnarray*}
&&\|\partial_{i_2}(1-\Delta)^{-1}g\circ F_1\cdot H_1-\partial_{i_2}(1-\Delta)^{-1}g\circ F_2\cdot H_1\|_{-\delta',p'}\\
&=&\sup\{|E[\partial_{i_2}(1-\Delta)^{-1}g\circ F_1\cdot H_1\cdot G-\partial_{i_2}(1-\Delta)^{-1}g\circ F_2\cdot H_1\cdot G]|:
\|G\|_{\delta',q'}\leqslant 1\}\\
&\leqslant&\sup\{|E[(\partial_{i_2}(1-\Delta)^{-1}g\circ F_1\cdot H_1\cdot G-\partial_{i_2}(1-\Delta)^{-1}g\circ F_2\cdot H_1\cdot G)
\cdot(1-\Psi(R_{F_1,F_2}))]|\\
&&:\|G\|_{\delta',q'}\leqslant 1\}\\
&&{}+\sup\{|E[(\partial_{i_2}(1-\Delta)^{-1}g\circ F_1\cdot H_1\cdot G-\partial_{i_2}(1-\Delta)^{-1}
g\circ F_2\cdot H_1\cdot G)\cdot\Psi(R_{F_1,F_2})]|\\
&&:\|G\|_{\delta',q'}\leqslant 1\}\\
&=:&\Xi_3+\Xi_4,
\end{eqnarray*}
where $\Psi$ and $R_{F_1,F_2}$ are defined in Section 2.
First we consider the term $\Xi_3$. In view of
$(\det(\Sigma_{F_1}))^{-1}\cdot 1_{\{H_2\neq 0\}}\in{\mathbb L}^{\infty-}$, $F_1\in{\mathbb D}_{2+\delta}^{\infty-}({\mathbb R}^d)$, we have
\begin{eqnarray*}
&&P(\{\Psi(R_{F_1,F_2})\neq 1\}\cap\{H_2\neq 0\})\\
&\leqslant& P(\{((\det(\Sigma_{F_1}))^{-1})^{1/2}(1+\|\Sigma_{F_1}\|_2^2)^{(d-1)/4}
\|D(F_1-F_2)\|_{\mathbb H}>\frac{1}{2\sqrt{2}}\}\cap\{H_2\neq 0\})\\
&\leqslant&(2\sqrt{2})^{r_3'}E[(((\det(\Sigma_{F_1}))^{-1})^{1/2}(1+\|\Sigma_{F_1}\|_2^2)^{(d-1)/4})^{r_3'}\|D(F_1-F_2)\|_{\mathbb H}
^{r_3'}\cdot 1_{\{H_2\neq0\}}]\\
&\leqslant&CE[\|DF_1-DF_2\|_{\mathbb H}^{r_2}]^{r_3'/r_2},
\end{eqnarray*}
where $r_3<r_3'<r_2$.
Then by Remark \ref{localness}, we obtain
\begin{eqnarray*}
&&|E[(\partial_{i_2}(1-\Delta)^{-1}g\circ F_1\cdot H_1\cdot G-\partial_{i_2}(1-\Delta)^{-1}g\circ F_2\cdot H_1\cdot G)
\cdot(1-\Psi(R_{F_1,F_2}))]|\\
&\leqslant& |E[\partial_{i_2}(1-\Delta)^{-1}g\circ F_1\cdot H_1\cdot H_2\cdot G\cdot(1-\Psi(R_{F_1,F_2}))]|\\
&&{}+|E[\partial_{i_2}
(1-\Delta)^{-1}g\circ F_2\cdot H_1\cdot H_2\cdot G\cdot(1-\Psi(R_{F_1,F_2}))]|\\
&\leqslant& C\|\partial_{i_2}(1-\Delta)^{-1}g\circ F_1\cdot H_1\|_{-k',p''}\cdot\|H_2\cdot G\cdot(1-\Psi(R_{F_1,F_2}))\|_{k',q''}\\
&&{}+C\|\partial_{i_2}(1-\Delta)^{-1}g\circ F_2\cdot H_1\|_{-k',p''}\cdot\|H_2\cdot G\cdot(1-\Psi(R_{F_1,F_2}))\|_{k',q''}\\
&\leqslant& C\|\partial_{i_2}(1-\Delta)^{-1}g\circ F_1\cdot H_1\|_{-k',p''}\cdot\|G\|_{k',q'}\cdot\|H_2\cdot(1-\Psi(R_{F_1,F_2}))\|_{k',r_3}\\
&&{}+C\|\partial_{i_2}(1-\Delta)^{-1}g\circ F_2\cdot H_1\|_{-k',p''}\cdot\|G\|_{k',q'}\cdot\|H_2\cdot(1-\Psi(R_{F_1,F_2}))\|_{k',r_3}\\
&\leqslant& C\|\partial_{i_2}(1-\Delta)^{-1}g\circ F_1\cdot H_1\|_{-k',p''}\cdot\|G\|_{k',q'}\cdot P(\{\Psi(R_{F_1,F_2})\neq 1\}\cap\{H_2\neq 0\})^{1/r_3'}\\
&&{}+C\|\partial_{i_2}(1-\Delta)^{-1}g\circ F_2\cdot H_1\|_{-k',p''}\cdot\|G\|_{k',q'}\cdot P(\{\Psi(R_{F_1,F_2})\neq 1\}\cap\{H_2\neq 0\})^{1/r_3'}\\
&\leqslant& C\|\partial_{i_2}(1-\Delta)^{-1}g\circ F_1\cdot H_1\|_{-k',p''}\cdot\|G\|_{k',q'}\cdot\|F_1-F_2\|_{1,r_2}\\
&&{}+C\|\partial_{i_2}(1-\Delta)^{-1}g\circ F_2\cdot H_1\|_{-k',p''}\cdot\|G\|_{k',q'}\cdot\|F_1-F_2\|_{1,r_2},
\end{eqnarray*}
where $k'<\delta'\leqslant k'+1$. By Lemma \ref{local version2} and Remark \ref{bally remark}, it is obvious that
\begin{eqnarray*}
\|\partial_{i_2}(1-\Delta)^{-1}g\circ F_i\cdot H_1\|_{-k',p''}\leqslant C\|\partial_{i_2}(1-\Delta)^{-1}g\|_{-k',p'''}\leqslant C\|g\|_{-(k'+1),p'''}.
\end{eqnarray*}
Consequently
\begin{eqnarray}\label{t2}
\Xi_3\leqslant C\|g\|_{-(k'+1),p'''}\cdot\|F_1-F_2\|_{1,r_2}\leqslant C\|g\|_{-\delta',p'''}\cdot\|F_1-F_2\|_{1,r_2}.
\end{eqnarray}
Next we consider the second term $\Xi_4$.
First by (\ref{important result1}), we have
\begin{eqnarray*}
\{\Psi_1(R_{F_1,F_2})\neq 0\}\subset \{\det(\Sigma_{(F_2+t(F_1-F_2))})\geqslant(1-\frac{\sqrt{2}}{2})^{2d}
\frac{(\det(\Sigma_{F_1}))^d}{\|\Sigma_{F_1}\|^{d(d-1)}}\}\quad\text{a.s.}.
\end{eqnarray*}
Then by $(\det(\Sigma_{F_1}))^{-1}\cdot 1_{\{H_2\neq 0\}}\in{\mathbb L}^{\infty-}$, $F_1\in{\mathbb D}_{2+\delta}^{\infty-}({\mathbb R}^d)$,
we obtain that $F_2+t(F_1-F_2)$ is nondegenerate a.s. on the set
$\{H_2\neq0\}\cap\{\Psi_1(R_{F_1,F_2})\neq 0\}$, uniformly in $t\in[0,1]$.
Besides, by Remark \ref{composite} it is obvious that $F_2+t(F_1-F_2)\in{\mathbb D}_{2+\delta}^{\infty-}({\mathbb R}^d)$, $\Psi(R_{F_1,F_2})\cdot H_1
\in{\mathbb D}_{(1+\delta)-}^{\infty-}$ and $\Psi_1(R_{F_1,F_2})\cdot H_2\in{\mathbb D}_{(1+\delta)-}^{\infty-}$, and hence
$F_2+t(F_1-F_2)\in{\mathbb D}_{2+\delta'}^{\infty-}({\mathbb R}^d)$, $\Psi(R_{F_1,F_2})\cdot H_1\in{\mathbb D}_{k'+1}^{\infty-}$ and
$\Psi_1(R_{F_1,F_2})\cdot H_2\in{\mathbb D}_{k'+1}^{\infty-}$. Therefore, $F_2+t(F_1-F_2)$, $\Psi(R_{F_1,F_2})\cdot H_1$ and
$\Psi_1(R_{F_1,F_2})\cdot H_2$ satisfy all the conditions in
Lemma \ref{local version2} by condition \eqref{bally example}. Consequently, applying mean value theorem, Remark \ref{bally remark} (2) and
Lemma \ref{local version2}, we have
\begin{eqnarray}\label{t3}
&&\Xi_4\nonumber\\
&=&\|\partial_{i_2}(1-\Delta)^{-1}g\circ F_1\cdot H_1\cdot\Psi(R_{F_1,F_2})-\partial_{i_2}(1-\Delta)^{-1}g\circ
F_2\cdot H_1\cdot\Psi(R_{F_1,F_2})\|_{-\delta',p'}\nonumber\\
&\leqslant&\int_0^1\|\partial_i\partial_{i_2}(1-\Delta)^{-1}g\circ(F_2+t(F_1-F_2))\cdot(F_1^i-F_2^i)\cdot H_1\cdot
\Psi(R_{F_1,F_2})\|_{-\delta',p'}dt\nonumber\\
&\leqslant& C\int_0^1\|\partial_i\partial_{i_2}(1-\Delta)^{-1}g\circ(F_2+t(F_1-F_2))\cdot
H_1\cdot\Psi(R_{F_1,F_2})\|_{-\delta',p''}\cdot\|F_1^i-F_2^i\|_{\delta',r_3}dt\nonumber\\
&\leqslant& C\|\partial_i\partial_{i_2}(1-\Delta)^{-1}g\|_{-\delta'',p'''}\cdot\|F_1-F_2\|_{\delta',r_3},
\end{eqnarray}
and, noting the ${\mathbb L}_{\alpha}^p({\mathbb R}^d)$ boundedness of $\partial_{i_1}\partial_{i_2}(1-\Delta)^{-1}$, this is further
dominated by $C\|g\|_{-\delta'',p'''}\cdot\|F_1-F_2\|_{\delta',r_3}$.
Thus combining (\ref{t1}), (\ref{t2}) and (\ref{t3}), we have
\begin{eqnarray*}
&&\|g\circ F_1\cdot H-g\circ F_2\cdot H\|_{-(1+{\delta'}),p}\\
&\leqslant& C_1\|g\|_{-(1+\delta''),p''}\cdot\|F_1-F_2\|_{2+\delta',r_1}+ C_2\|g\|_{-\delta',p'''}\cdot\|F_1-F_2\|_{1,r_2}\\
&&{}+C_3\|g\|_{-\delta'',p'''}\cdot\|F_1-F_2\|_{\delta',r_3}.
\end{eqnarray*}
This completes the proof of the lemma.
\end{proof}

\begin{remark}
In the proof of Lemma \ref{comparison lemma}, the reason why we make use of the integration by parts formula rather
than a direct use of the mean value theorem, is to ensure that $\Psi(R_{F_1,F_2})\cdot H_1$ satisfies the condition (ii) of
Lemma \ref{local version2}.
\end{remark}

\begin{proof}[Proof of Theorem \ref{comparison theorem}]
In what follows, $q=p/(p-1)$, $q'=p'/(p'-1)$, $q''=p''/(p''-1)$. Since $0<\beta<\alpha\wedge(1+\delta)-1-d/q$,
we can choose $1<q'''<q''$ and $0<\delta''<\delta'<\delta$ such that
$\delta''-\beta>d/q'''$. Hence by Lemma \ref{w1}, we have
\begin{eqnarray}\label{real interpolation}
x\rightarrow(1-\Delta)^{\beta/2}\delta_x\in {\mathbb L}_{-\delta''}^{p'''}({\mathbb R}^d)
\end{eqnarray}
is bounded and continuous, where $1/p'''+1/q'''=1$.
Hence by Lemma \ref{comparison lemma} and (\ref{real interpolation}), we can find $C>0$ such that
\begin{eqnarray*}
&&\|(1-\Delta)^{\beta/2}\delta_x\circ F_1\cdot H-(1-\Delta)^{\beta/2}\delta_x\circ F_2\cdot H\|_{-\alpha,p}\\
&\leqslant& \|(1-\Delta)^{\beta/2}\delta_x\circ F_1\cdot H-(1-\Delta)^{\beta/2}\delta_x\circ F_2\cdot H\|_{-(1+{\delta'}),p}\\
&\leqslant& C\|(1-\Delta)^{\beta/2}\delta_x\|_{-(1+\delta''),p''}\cdot\|F_1-F_2\|_{2+\delta',r_1}\\
&&\quad{}+ C\|(1-\Delta)^{\beta/2}\delta_x\|_{-\delta',p'''}\cdot\|F_1-F_2\|_{1,r_2}\\
&&\quad\quad{}+C\|(1-\Delta)^{\beta/2}\delta_x\|_{-\delta'',p'''}\cdot\|F_1-F_2\|_{\delta',r_3}\\
&\leqslant& C\|F_1-F_2\|_{2+\delta',r_1}+ C\|F_1-F_2\|_{1,r_2}+C\|F_1-F_2\|_{\delta',r_3}.
\end{eqnarray*}
The proof is thus completed.
\end{proof}

Similar to the proof of Theorem \ref{comparison theorem}, by Theorem \ref{inverse worse},
we have the following theorem which shows that the condition $H,H_1,H_2\in{\mathbb D}_{k+2}^{\infty-}$ can be improved to
$H,H_1,H_2\in{\mathbb D}_{(1+\delta)-}^{\infty-}$.
\begin{theorem}\label{comparison theorem worse}
Let $\delta=k+\sigma$, $k\in{\mathbb N}$, $0<\sigma\leqslant1$. Suppose that $H,H_1,H_2:B\rightarrow[0,1]$ and $F_1,F_2:B\rightarrow {\mathbb R}^d$ are
Wiener functionals such that
\begin{itemize}
  \item[(i)] $F_1,F_2\in{\mathbb D}_{2+\delta}^{\infty-}({\mathbb R}^d)$;
  \item[(ii)]$H,H_1,H_2\in{\mathbb D}_{(1+\delta)-}^{\infty-}$ with
  \begin{eqnarray*}
  1+E_{H_2}[\|D\ln H_2\|_{\mathbb H}^p]<\infty\quad\text{for every}\quad p\geq1,
  \end{eqnarray*}
  such that $H_1=1$ on the set $\{H\neq0\}$ and $H_2=1$ on the set $\{H_1\neq0\}$;
  \item[(iii)] there is a measurable set $A$ such that $H_21_A=0$ and $F_1,F_2$ are nondegenerate a.s. on the set $A^c$.
\end{itemize}
Then for every $p$, $p'$, $p''$, $r_1$, $r_2$ and $r_3$ satisfying $1<p<p'<p''<\infty$, $r_1>pp'/(p'-p)$ and $r_2>r_3=p'p''/(p''-p')$, and for
\begin{eqnarray*}
0<\beta<\alpha\wedge(1+k+\frac{\sigma}{2^{k+2}})-1-\frac{d(p-1)}{p}, \beta+\frac{d(p''-1)}{p''}<\delta'<k+\frac{\sigma}{2^{k+2}}<\delta,
\delta'\leqslant\alpha-1,
\end{eqnarray*}
we can find a positive constant $C$ which may depend on $F_1$, $F_2$, $\alpha$, $\beta$, $\delta$, $\delta'$, $p$, $p'$, $p''$, $r_1$, $r_2$
 and $r_3$ such that
\begin{eqnarray*}
&&\|(1-\Delta)^{\beta/2}\delta_x\circ F_1\cdot H-(1-\Delta)^{\beta/2}\delta_x\circ F_2\cdot H\|_{-\alpha,p}\nonumber\\
&\leqslant& C\|F_1-F_2\|_{2+\delta',r_1}+C\|F_1-F_2\|_{1,r_2}+C\|F_1-F_2\|_{\delta',r_3}.
\end{eqnarray*}
\end{theorem}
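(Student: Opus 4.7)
The proof will follow the same outline as that of Theorem \ref{comparison theorem}, with the essential change that Theorem \ref{inverse worse} replaces Theorem \ref{inverse} wherever the latter was used to invert the Malliavin covariance matrix against a fractionally smooth weight. First, I would observe that the covariance determinant $\det(\Sigma_{F_i})\in\mathbb{D}_{1+\delta}^{\infty-}$ (since $\mathbb{D}_{1+\delta}^{\infty-}$ is an algebra and the entries $\sigma_{F_i}^{jk}$ lie in it), and then apply Theorem \ref{inverse worse} with index $1+\delta=(k+1)+\sigma$ together with the weight $H\in\mathbb{D}_{(1+\delta)-}^{\infty-}$. This yields
\begin{eqnarray*}
\gamma_{F_i}^{jk}\cdot H\in\mathbb{D}_{(1+k+\sigma/2^{k+2})-}^{\infty-},
\end{eqnarray*}
and similarly for the weights $H_1,H_2$ and the localizer $\Psi_1(R_{F_1,F_2})\cdot H_2$. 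The effective smoothness index is therefore $\delta^{*}:=k+\sigma/2^{k+2}$, which plays the role that $\delta$ played in the proof of Theorem \ref{comparison theorem}.

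Next, I would rerun the proof of Lemma \ref{key estimate} verbatim with this substitution. Since the assumption $F_1,F_2\in\mathbb{D}_{2+\delta}^{\infty-}(\mathbb{R}^d)$ is unchanged, the bounds on $\|\sigma_{F_1}^{kl}-\sigma_{F_2}^{kl}\|_{1+\delta',m'}$ and $\|DF_1^j-DF_2^j\|_{1+\delta',m}$ are unaffected for any $\delta'<\delta$; the only new constraint is $\delta'<\delta^{*}$, which is precisely what guarantees the finiteness of the norms $\|\gamma_{F_i}^{jk}\cdot H\|_{1+\delta',\cdot}$ and $\|\gamma_{F_2}^{jk}\cdot H_1\|_{1+\delta',\cdot}$ coming from the strengthened inclusion above. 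In the same way I would adapt Lemma \ref{comparison lemma}, restricting $\delta'<\delta^{*}$. Its two key ingredients — the continuity of $G\mapsto H_i(F_1,H\cdot G)$ from $\mathbb{D}_{1+\delta'}^{q}$ into $\mathbb{D}_{\delta'-}^{q-}$, and the mean-value splitting on $\{\Psi(R_{F_1,F_2})\neq 0\}$ combined with Remark \ref{bally remark} — both hinge only on testing $\gamma_{F_i}^{jk}$ against localizing weights in $\mathbb{D}_{1+\delta^{*}-}^{\infty-}$, which is exactly what Theorem \ref{inverse worse} provides. The applicability of Proposition \ref{bally} to $F_2+t(F_1-F_2)$ with localizer $\Psi_1(R_{F_1,F_2})\cdot H_2$ is unaffected, since $H_2\in\mathbb{D}_{(1+\delta)-}^{\infty-}\subset\mathbb{D}_2^{\infty-}$ and the tail condition \eqref{bally example} on $\Psi_1$ is still in force.

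The theorem then follows by combining this adapted version of Lemma \ref{comparison lemma} with Lemma \ref{w1}, exactly as in the proof of Theorem \ref{comparison theorem}: choose $1<q'''<q''$ and $0<\delta''<\delta'<\delta^{*}$ with $\delta''-\beta>d/q'''$, and use the continuity of $x\mapsto(1-\Delta)^{\beta/2}\delta_x\in\mathbb{L}_{-\delta''}^{p'''}(\mathbb{R}^d)$ to control each of the three terms on the right-hand side. The condition $0<\beta<\alpha\wedge(1+k+\sigma/2^{k+2})-1-d(p-1)/p$ in the statement is exactly what makes this parameter selection possible, reflecting the diminished smoothness arising from Theorem \ref{inverse worse}.

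The main obstacle is bookkeeping of indices: one must check that every application of Theorem \ref{inverse worse} (in place of Theorem \ref{inverse}) remains compatible with the Meyer equivalence and the continuity $D^{*}:\mathbb{D}_{1+\alpha}^{p}(\mathbb{H})\to\mathbb{D}_{\alpha}^{p}$ used in the original proofs, and that the tightened parameter range $\beta+d(p''-1)/p''<\delta'<k+\sigma/2^{k+2}$ still leaves room to choose admissible auxiliary exponents $p''',\delta''$. Once this verification is carried out, no new analytic idea is needed beyond what is already in Lemmas \ref{key estimate}, \ref{comparison lemma} and the proof of Theorem \ref{comparison theorem}.
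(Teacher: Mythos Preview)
Your proposal is correct and matches the paper's own approach: the paper simply states that the result follows ``similar to the proof of Theorem \ref{comparison theorem}, by Theorem \ref{inverse worse}'', and you have correctly fleshed out what this means, namely that applying Theorem \ref{inverse worse} with index $1+\delta=(k+1)+\sigma$ gives $\gamma_{F_i}^{jk}\cdot H\in\mathbb{D}_{(1+\delta^*)-}^{\infty-}$ with $\delta^*=k+\sigma/2^{k+2}$, after which Lemmas \ref{key estimate} and \ref{comparison lemma} and the final step go through with $\delta$ replaced by $\delta^*$. One small slip: your claim $H_2\in\mathbb{D}_{(1+\delta)-}^{\infty-}\subset\mathbb{D}_2^{\infty-}$ fails when $k=0$, but this is harmless since Proposition \ref{bally} only requires the weight to lie in $\mathbb{D}_1^{\infty-}$.
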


\section{Convergence rate of density of the Euler scheme for non-Markovian stochastic differential equations:
Applications of Theorem \ref{comparison theorem}}
In this section we study the convergence rate of density of the Euler scheme for non-Markovian stochastic differential equations. Before proceeding, we introduce some notations and notions. Given compact metric spaces $M_1,\cdots,M_d$ and real separable Hilbert spaces $E_1,\cdots,E_d$ and $E$, let $k\geqslant 1$ be an integer, we denote by $C_p^k(C(M_1;E_1)\otimes\cdots\otimes C(M_d;E_d);E)$ the class of continuous $F:C(M_1;E_1)\otimes\cdots\otimes C(M_d;E_d)\rightarrow E$ such that for any multi-index $\alpha=(\alpha_1,\cdots,\alpha_d)$ with $1\leqslant|\alpha|=\alpha_1+\cdots+\alpha_d\leqslant k$
and any $g_j^i\in C(M_i;E_i)$, $1\leqslant i\leqslant d$, $0\leqslant j\leqslant \alpha_i$, the map
\begin{eqnarray*}
(x_1^1,\cdots,x_{\alpha_1}^1,\cdots,x_1^d,\cdots,x_{
\alpha_d}^d)\in{\mathbb R}^{|\alpha|}\mapsto F\left(
                                       \begin{array}{c}
                                         g_0^1+\sum_{j=1}^{\alpha_1}x_j^1g_j^1 \\
                                         \vdots \\
                                         g_0^d+\sum_{j=1}^{\alpha_d}x_j^dg_j^d \\
                                       \end{array}
                                     \right)~\text{is}~C^k;
\end{eqnarray*}
there is a continuous
\begin{eqnarray*}
F^{(\alpha)}:C(M_1;E_1)\otimes\cdots\otimes C(M_d;E_d)\rightarrow Hom (C(M^{\alpha};E^{\otimes\alpha});E)
\end{eqnarray*}
for which
\begin{eqnarray}\label{high order derivative}
&&\frac{\partial^{\alpha}}{\partial x_1^1\cdots\partial x_{\alpha_1}^1\cdots\partial x_1^d\cdots\partial x_{\alpha_d}^d}F\left(
                                                                          \begin{array}{c}
                                                                            g_0^1+\sum_{j=1}^{\alpha_1}x_j^1g_j^1 \\
                                                                            \vdots \\
                                                                            g_0^d+\sum_{j=1}^{\alpha_d}x_j^dg_j^d \\
                                                                          \end{array}
                                                                        \right)|_{x_1^1=\cdots=x_{\alpha_d}^d=0}\nonumber\\
&=&F^{(\alpha)}\left(
                \begin{array}{c}
                  g_0^1 \\
                  \vdots \\
                  g_0^d \\
                \end{array}
              \right)(g_1^1(*_{1,1})\otimes\cdots\otimes g_{\alpha_1}^1(*_{1,\alpha_1})\otimes\cdots\otimes g_1^d(*_{d,1})
              \otimes\cdots\otimes g_{\alpha_d}^d(*_{d,\alpha_d}));
\end{eqnarray}
and, for any $0\leqslant|\alpha|\leqslant k$, there exist $C_{\alpha}<\infty$ and $\lambda_{\alpha}<\infty$ such that
\begin{eqnarray*}
\|F^{(\alpha)}\left(
              \begin{array}{c}
                g^1 \\
                \vdots \\
                g^d \\
              \end{array}
            \right)\|_{Hom(C(M^{\alpha};E^{\otimes\alpha});E)}\leqslant C_{\alpha}(1+\sum_{i=0}^d\|g^i\|_{C(M_i;E_i)})^{\lambda_{\alpha}}
\end{eqnarray*}
for $g^{i}\in C(M_i;E_i)$, $i=1,\cdots,d$. Similarly, let $k\geqslant 1$ be an integer, we denote by $C_b^k(C(M_1;E_1)\otimes\cdots\otimes C(M_d;E_d);E)$ the class of continuous $F:C(M_1;E_1)\otimes\cdots\otimes C(M_d;E_d)\rightarrow E$ such that for any multi-index $\alpha=(\alpha_1,\cdots,\alpha_d)$ with $1\leqslant|\alpha|=\alpha_1+\cdots+\alpha_d\leqslant k$
and any $g_j^i\in C(M_i;E_i)$, $1\leqslant i\leqslant d$, $0\leqslant j\leqslant \alpha_i$, the map
\begin{eqnarray*}
(x_1^1,\cdots,x_{\alpha_1}^1,\cdots,x_1^d,\cdots,x_{
\alpha_d}^d)\in{\mathbb R}^{|\alpha|}\mapsto F\left(
                                       \begin{array}{c}
                                         g_0^1+\sum_{j=1}^{\alpha_1}x_j^1g_j^1 \\
                                         \vdots \\
                                         g_0^d+\sum_{j=1}^{\alpha_d}x_j^dg_j^d \\
                                       \end{array}
                                     \right)~\text{is}~C^k;
\end{eqnarray*}
there is a continuous
\begin{eqnarray*}
F^{(\alpha)}:C(M_1;E_1)\otimes\cdots\otimes C(M_d;E_d)\rightarrow Hom (C(M^{\alpha};E^{\otimes\alpha});E)
\end{eqnarray*}
for which
\begin{eqnarray*}
&&\frac{\partial^{\alpha}}{\partial x_1^1\cdots\partial x_{\alpha_1}^1\cdots\partial x_1^d\cdots\partial x_{\alpha_d}^d}F\left(
                                                                          \begin{array}{c}
                                                                            g_0^1+\sum_{j=1}^{\alpha_1}x_j^1g_j^1 \\
                                                                            \vdots \\
                                                                            g_0^d+\sum_{j=1}^{\alpha_d}x_j^dg_j^d \\
                                                                          \end{array}
                                                                        \right)|_{x_1^1=\cdots=x_{\alpha_d}^d=0}\nonumber\\
&=&F^{(\alpha)}\left(
                \begin{array}{c}
                  g_0^1 \\
                  \vdots \\
                  g_0^d \\
                \end{array}
              \right)(g_1^1(*_{1,1})\otimes\cdots\otimes g_{\alpha_1}^1(*_{1,\alpha_1})\otimes\cdots\otimes g_1^d(*_{d,1})
              \otimes\cdots\otimes g_{\alpha_d}^d(*_{d,\alpha_d}));
\end{eqnarray*}
and, for any $0\leqslant|\alpha|\leqslant k$, there exists $C_{\alpha}<\infty$ such that
\begin{eqnarray*}
\|F^{(\alpha)}\left(
              \begin{array}{c}
                g^1 \\
                \vdots \\
                g^d \\
              \end{array}
            \right)\|_{Hom(C(M^{\alpha};E^{\otimes\alpha});E)}\leqslant C_{\alpha}
\end{eqnarray*}
for $g^{i}\in C(M_i;E_i)$, $i=1,\cdots,d$, furnished with the following norm:
\begin{eqnarray*}
\|F\|_{C_b^k(C(M_1;E_1)\otimes\cdots\otimes C(M_d;E_d);E)}=\sum_{|\alpha|=0}^k\sup_{g^{i}\in C(M_i;E_i)\atop i=1,\cdots,d}\|F^{(\alpha)}\left(
              \begin{array}{c}
                g^1 \\
                \vdots \\
                g^d \\
              \end{array}
            \right)\|_{Hom(C(M^{\alpha};E^{\otimes\alpha});E)}.
\end{eqnarray*}

For any $p>1$, let ${\mathcal G}_0^p$ be the class of continuous progressively measurable functions: $\xi:[0,T]\times B\rightarrow E$ such that
\begin{eqnarray*}
\|\xi\|_{0,p,T;E}:=\|\sup_{0\leqslant t\leqslant T}\|\xi(t)\|_E\|_{{\mathbb L}^p}<\infty.
\end{eqnarray*}
For every integer $k\geqslant 1$ and any $p>1$, let ${\mathcal G}_k^p$ be the class of $\xi\in {\mathcal G}_0^p$ such that $\xi(t)\in{\mathbb D}_j^p(E)$ for each $t\in [0,T]$ and $j=1,\cdots,k$, equipped with the following norm
\begin{eqnarray}\label{soblev norm}
\|\xi\|_{k,p,T;E}:=\sum_{0\leqslant j\leqslant k}\|\sup_{0\leqslant t\leqslant T}\|D^j\xi(t)\|_{{\mathbb H}^{\otimes j}\otimes E}\|_{{\mathbb L}^p}<\infty.
\end{eqnarray}
Finally, we define ${\mathcal G}_{\infty}^{\infty-}$ by
\begin{eqnarray*}
{\mathcal G}_{\infty}^{\infty-}=\bigcap_{k\geqslant 0}\bigcap_{1<p<\infty}{\mathcal G}_k^p.
\end{eqnarray*}
Now we introduced the following fractional order Sobolev space ${\mathcal G}_{\alpha}^p$.
\begin{definition}
For every integer $k\geq 0$, any $0<\alpha<1$ and any $1<p<\infty$, we define
\begin{eqnarray*}
{\mathcal G}_{k+\alpha}^p=({\mathcal G}_k^p,{\mathcal G}_{k+1}^p)_{\alpha,p}.
\end{eqnarray*}
\end{definition}
We shall use the following norms which was given by Peetre's $K$-method.
\begin{eqnarray}\label{fractional regularity soblev norm}
\|\xi\|_{k+\alpha,p,T;E}:=\big[\int_0^1[\epsilon^{-\alpha}K(\epsilon,p,\xi)]^p\frac{d\epsilon}{\epsilon}\big]^{\frac{1}{p}},
\end{eqnarray}
where
\begin{eqnarray*}
K(\epsilon,p,\xi):=\inf\{\|\xi_1\|_{k,p,T;E}+\epsilon\|\xi_2\|_{k+1,p,T;E},\xi_1+\xi_2=\xi,\xi_1,\xi_2\in{\mathcal G}_k^p\}.
\end{eqnarray*}

The following definition is taken from \cite{KS}.
\begin{definition}
Let $k$ be an integer and $E_1$ and $E_2$ be real separable Hilbert spaces. We say that a function $F:[0,T]\times C([0,T];E_1)\rightarrow E_2$ satisfies $(A_{k,E_1,E_2})$ if
\begin{enumerate}
\item[(i)]
$F$ is measurable, and for each $t\in [0,T]$, there is an
\begin{eqnarray*}
F(t)\in C_p^{k}(C([0,t];E_1);E_2)
\end{eqnarray*}
such that $F(t,\psi)=F(t)(\psi|_{[0,t]})$ for all $\psi\in C([0,T];E_1)$;
\item[(ii)]
for each integer $0\leqslant n\leqslant k$, there exist $C_n<\infty$ and $\gamma_n$, where $\gamma_n=1$ for $n=0$ and $\gamma_n=0$ for $n\geqslant 1$, such that
\begin{eqnarray}\label{a1}
\|F(t)^{(n)}(\psi)\|_{Hom(C([0,t]^n;E_1^{\otimes n});E_2)}\leqslant C_n(1+\|\psi\|_{C([0,t];E_1)})^{\gamma_n}
\end{eqnarray}
for all $t\in[0, T]$ and $\psi\in C([0,t];E_1)$;
\item[(iii)]
for each integer $0\leqslant n\leqslant k$, there exists $C_n<\infty$ such that
\begin{eqnarray}\label{a2}
\|F(t)^{(n)}(\psi)(g|_{[0,t]^n})-F(s)^{(n)}(\psi)(g|_{[0,s]^n})\|_{E_2}
\leqslant C_n|t-s|^{1/2}(1+\|\psi\|_{C([0,T];E_1)})
\end{eqnarray}
for all $t,s\in[0, T]$, $\psi\in C([0,T];E_1)$ and $g\in C([0,T]^n;E_1^{\otimes n})$.
\end{enumerate}
\end{definition}

We also need the following the following definition.
\begin{definition}
Let $\delta=k+\theta$, $k\in {\mathbb N}$, $0<\theta<1$, $p>1$ and $E_1$ and $E_2$ be real separable Hilbert spaces. We say that a function $F:[0,T]\times C([0,T];E_1)\rightarrow E_2$ satisfies $(A_{1+\delta,E_1,E_2})$ (or $(A_{2+\delta,E_1,E_2})$) if
\begin{enumerate}
\item[(i)]
$F$ is measurable, and for each $t\in [0,T]$, there is an
\begin{eqnarray*}
&&F(t)\in(C_b^{1+k}(C([0,t];E_1);E_2),C_b^{2+k}(C([0,t];E_1);E_2))_{\theta,p}\\
&&(\text{or}\quad F(t)\in(C_b^{2+k}(C([0,t];E_1);E_2),C_b^{3+k}(C([0,t];E_1);E_2))_{\theta,p})
\end{eqnarray*}
such that $F(t,\psi)=F(t)(\psi|_{[0,t]})$ for all $\psi\in C([0,T];E_1)$, where the norm
\begin{eqnarray*}
&&\|F(t)\|_{(C_b^{1+k}(C([0,t];E_1);E_2),C_b^{2+k}(C([0,t];E_1);E_2))_{\theta,p}}\\
&&(\text{or}\quad\|F(t)\|_{(C_b^{2+k}(C([0,t];E_1);E_2),C_b^{3+k}(C([0,t];E_1);E_2))_{\theta,p}})
\end{eqnarray*}
is controlled by a constant which is independent of $t$;
\item[(ii)]
for each integer $0\leqslant n\leqslant 1+k$ (or $2+k$), there exists $C_n<\infty$ such that
\begin{eqnarray*}
\|F(t)^{(n)}(\psi)(g|_{[0,t]^n})-F(s)^{(n)}(\psi)(g|_{[0,s]^n})\|_{E_2}
\leqslant C_n|t-s|^{1/2}(1+\|\psi\|_{C([0,T];E_1)})
\end{eqnarray*}
for all $t,s\in[0, T]$, $\psi\in C([0,T];E_1)$ and $g\in C([0,T]^n;E_1^{\otimes n})$.
\end{enumerate}
\end{definition}

Now we introduce the Euler scheme for SDE (\ref{SDE2}). Let $T>0$ be a fixed time horizon, and let $T/n$ be the
discretization step for every integer $n>0$. Set $X_n(0)=x$ and for $kT/n< t\leqslant(k+1)T/n$, $X_n(t)$ is inductively defined by
\begin{eqnarray*}
X_n(t)=X_n(\frac{kT}{n})+\int_{\frac{kT}{n}}^tb(\frac{kT}{n},X_n(\cdot\wedge\frac{kT}{n}))ds+\int_{\frac{kT}{n}}^t
\sigma(\frac{kT}{n},X_n(\cdot\wedge\frac{kT}{n}))dW(s).
\end{eqnarray*}

First we need the following results. We shall make use of the following assumptions.
\begin{itemize}
\item[(A.I)~]
$\sigma:[0,T]\times C([0,T];{\mathbb R}^d)\rightarrow {\mathbb R}^d\otimes{\mathbb R}^m$ satisfies the condition $(A_{1+k,{\mathbb R}^d,{\mathbb R}^d\otimes{\mathbb R}^m})$ and $b:[0,T]\times C([0,T];{\mathbb R}^d)\rightarrow{\mathbb R}^d$ satisfies the condition $(A_{1+k,{\mathbb R}^d,{\mathbb R}^d})$;
\item[(A.II)]
$\sigma$ is bounded and uniformly nondegenerate, i.e., there exists a constant $c>0$ such that $\sigma(t,\varphi)\cdot\sigma^*(t,\varphi)\geqslant c\cdot I$, where $(\sigma(t,\varphi)\cdot\sigma^*(t,\varphi))^{ij}=\sum_{k=1}^r\sigma_k^i(t,\varphi)\sigma_k^j(t,\varphi)$, $i,j=1,\cdots,d$, for all $t\in[0,T]$ and $\varphi\in C([0,T];{\mathbb R}^d)$.
\end{itemize}

\begin{theorem}\label{eulerscheme}
Suppose that the coefficients $(\sigma,b)$ of SDE (\ref{SDE2}) satisfy (A.I) and (A.II). Then for any $p>1$, we have
\begin{eqnarray}\label{e1}
\|X(\cdot,x)\|_{k,p,T;{\mathbb R}^d}\vee\sup_{n\geqslant 1}\|X_n(\cdot,x)\|_{k,p,T;{\mathbb R}^d}<\infty
\end{eqnarray}
and
\begin{eqnarray}\label{e2}
\|X_n(\cdot,x)-X(\cdot,x)\|_{k,p,T;{\mathbb R}^d}=O(n^{-1/2}),
\end{eqnarray}
where the norm $\|\cdot\|_{k,p,T;{\mathbb R}^d}$ is defined in (\ref{soblev norm}).
Furthermore, we also have
\begin{eqnarray}\label{e3}
\|(\det\Sigma_{X(t,x)})^{-1}\|_p&<&\infty
\end{eqnarray}
and
\begin{eqnarray}\label{e4}
\sup_{n\geqslant 1}\|(\det\Sigma_{X_n(t,x)})^{-1}\|_p&<&\infty
\end{eqnarray}
for every $1<p<\infty$, $t\in[0,T]$.
\end{theorem}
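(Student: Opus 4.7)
The plan is to establish the four assertions in sequence, treating (\ref{e1})--(\ref{e2}) by standard Malliavin-differentiation plus BDG--Gr\"onwall arguments and then concentrating effort on the non-degeneracy bounds (\ref{e3})--(\ref{e4}), where the uniform-in-$n$ estimate is the main difficulty.

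For (\ref{e1}) I would iteratively differentiate both (\ref{SDE2}) and the Euler recursion in the Malliavin sense. Assumption (A.I) guarantees that the Fr\'echet derivatives $b(s)^{(n)}$ and $\sigma(s)^{(n)}$ of order $n \geqslant 1$ are uniformly bounded in the path-variable (since $\gamma_n = 0$ for $n \geqslant 1$ in (\ref{a1})), while $n = 0$ gives only linear growth, controlled a priori. The $j$-th Malliavin derivatives $D^j X(t)$ and $D^j X_n(t)$ then solve linear SDEs, respectively linear discrete recursions, with the same structural coefficients. An induction on $j$ combined with BDG and Gr\"onwall yields the $L^p$-bounds uniform in $t\in[0,T]$ and, crucially, uniform in $n$. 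For the rate (\ref{e2}) I would again induct on $j$. In the base case, split $X_n(t)-X(t)$ into a drift integral and a stochastic integral; bound the coefficient difference using the first-order Fr\'echet estimates from (A.I) for the spatial argument and the $1/2$-H\"older-in-time condition (\ref{a2}) for the frozen time argument; BDG and Gr\"onwall then give $O(n^{-1/2})$ in $L^p$. In the inductive step, $D^j X_n(t)-D^j X(t)$ satisfies a linear equation whose forcing term is $O(n^{-1/2})$ in $L^p$, using the bounds from (\ref{e1}) together with (\ref{a2}).

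For (\ref{e3}), I would use the classical Norris--Kusuoka--Stroock machinery: from $\Sigma_{X(t,x)} = \int_0^t D_r X(t) D_r X(t)^\ast\, dr$, the linear SDE for $r \mapsto D_r X(t)$ together with the uniform ellipticity of $\sigma\sigma^\ast$ from (A.II) and an exponential-martingale inequality give super-polynomial small-ball estimates for $\langle \Sigma_{X(t,x)} v, v\rangle$ and hence $L^p$-bounds on $(\det\Sigma_{X(t,x)})^{-1}$. The main obstacle is (\ref{e4}): the uniform-in-$n$ bound for the Euler scheme. Here I would exploit the block structure. For $t \in (kT/n,(k+1)T/n]$ and $r$ in the $k'$-th subinterval with $k' < k$, one has $D_r X_n(t) = J^n_{t,(k'+1)T/n}\,\sigma(k'T/n, X_n(\cdot\wedge k'T/n))$ plus a remainder controlled in every $L^p$ uniformly in $n$, where $J^n$ is the discrete flow Jacobian whose inverse moments are bounded uniformly in $n$ by the same linear-recursion argument as in stage one. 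Combining these identities with (A.II) yields the Riemann-sum lower bound
\begin{equation*}
\Sigma_{X_n(t,x)} \geqslant c\sum_{k'=0}^{k-1}(T/n)\, J^n_{t,(k'+1)T/n}\bigl(J^n_{t,(k'+1)T/n}\bigr)^{\ast} + c(t-kT/n)\, I,
\end{equation*}
which converges to the (non-degenerate) continuous-time covariance. Running the Norris-type exponential martingale argument in this discrete framework, with the uniform inverse-moment control of $J^n$, then yields $L^p$-bounds on $(\det\Sigma_{X_n(t,x)})^{-1}$ uniform in $n$, completing the theorem.
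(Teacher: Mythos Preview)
Your treatment of (\ref{e1}) and (\ref{e2}) is essentially the paper's: BDG plus Gr\"onwall plus induction on the order of the Malliavin derivative, with the time-H\"older condition (\ref{a2}) supplying the $n^{-1/2}$ rate. For (\ref{e3}) the paper also simply invokes the Kusuoka--Stroock result, as you do.

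The real divergence is at (\ref{e4}). You flag this as the main obstacle and propose a hands-on argument: express $D_r X_n(t)$ through a discrete flow Jacobian $J^n$, control its inverse moments uniformly in $n$, build a Riemann-sum lower bound for $\Sigma_{X_n(t,x)}$, and rerun a Norris-type small-ball argument in the discrete setting. This is plausible but heavier than needed, and in the non-Markovian setting the ``Jacobian'' is a path-functional operator rather than a matrix product, so the details of the inverse-moment step would require care. The paper avoids all of this by a single structural observation: the Euler scheme itself is a non-Markovian SDE of the form (\ref{SDE2}), namely
\[
X_n(t)=x+\int_0^t b_n(s,X_n(\cdot))\,ds+\int_0^t \sigma_n(s,X_n(\cdot))\,dW(s),
\]
with $b_n(s,\psi)=b(\eta_n(s),\psi(\cdot\wedge\eta_n(s)))$ and $\sigma_n(s,\psi)=\sigma(\eta_n(s),\psi(\cdot\wedge\eta_n(s)))$. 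Since $\sigma_n\sigma_n^\ast$ inherits the ellipticity constant of (A.II) and the regularity bounds of (A.I) uniformly in $n$, one simply applies the same Kusuoka--Stroock theorem to $X_n$ as to $X$, and the uniformity in $n$ falls out because the quantitative input to that theorem is $n$-independent. Your route would eventually reprove this, but the paper's observation turns (\ref{e4}) into a one-line reduction to (\ref{e3}).
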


Now we give a proof of Theorem \ref{eulerscheme}. Before proceeding, we prepare some propositions. In what follows we denote by $C$ a generic constant which can be different from one formula to another. For convenience we set $k_n(t)=k$ if $kT/n\leqslant t<(k+1)T/n$, and we denote $\eta_n(t)=k_n(t)T/n$.
\begin{proposition}\label{estimate1}
Suppose that the coefficients $(\sigma,b)$ of SDE (\ref{SDE2}) satisfy the following conditions: there exists $C_0<\infty$ such that
\begin{eqnarray}\label{assumption1}
\|\sigma(t,\phi)-\sigma(t,\psi)\|_{HS({\mathbb R}^d;{\mathbb R}^m)}\vee\|b(t,\phi)-b(t,\psi)\|_{{\mathbb R}^d}
\leqslant C_0\|\phi-\psi\|_{C([0,t];{\mathbb R}^d)},
\end{eqnarray}
\begin{eqnarray}\label{assumption3}
\|\sigma(t,\phi)-\sigma(s,\phi)\|_{HS({\mathbb R}^d;{\mathbb R}^m)}\vee\|b(t,\phi)-b(s,\phi)\|_{{\mathbb R}^d}
\leqslant C_0|t-s|^{1/2}(1+\|\phi\|_{C([0,T];{\mathbb R}^d)})
\end{eqnarray}
and
\begin{eqnarray}\label{assumption2}
\|\sigma(t,\phi)\|_{HS({\mathbb R}^d;{\mathbb R}^m)}\vee\|b(t,\phi)\|_{{\mathbb R}^d}\leqslant C_0(1+\|\phi|\|_{C([0,t];{\mathbb R}^d)})
\end{eqnarray}
for every $t,s\in[0,T]$ and $\phi,\psi\in C([0,T];{\mathbb R}^d)$. Then we have
\begin{eqnarray*}
\sup_{n\geqslant 1}E[\sup_{0\leqslant s\leqslant t}\|X_n(s)\|^{2p}]<\infty
\end{eqnarray*}
for all $p>1$ and $t\in[0,T]$. Moreover, for every $p>1$ there exists a constant $C=C(p)<\infty$ such that
\begin{eqnarray*}
\sup_{0\leqslant k\leqslant n-1}E[\sup_{\frac{kT}{n}\leqslant s<\frac{(k+1)T}{n}}\|X_n(s)-X_n(\eta_n(s))\|^{2p}]\leqslant Cn^{-p}
\end{eqnarray*}
for all $n$.
\end{proposition}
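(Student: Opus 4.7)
The plan is to establish both bounds by standard Euler-scheme energy estimates, using the BDG inequality together with the linear-growth hypothesis \eqref{assumption2} and Gr\"onwall's lemma for the first part, and then deriving the second part directly from the one-step increment formula.

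For the uniform $L^{2p}$ bound, I would first rewrite, for any $t\in[0,T]$,
\begin{eqnarray*}
X_n(t)=x+\int_0^t b(\eta_n(s),X_n(\cdot\wedge\eta_n(s)))\,ds+\int_0^t\sigma(\eta_n(s),X_n(\cdot\wedge\eta_n(s)))\,dW(s).
\end{eqnarray*}
Taking the supremum over $s\in[0,t]$, applying the elementary inequality $(a+b+c)^{2p}\leqslant 3^{2p-1}(a^{2p}+b^{2p}+c^{2p})$, H\"older on the drift and the BDG inequality on the stochastic integral, I obtain
\begin{eqnarray*}
E\[\sup_{0\leqslant s\leqslant t}\|X_n(s)\|^{2p}\]
\leqslant C(1+\|x\|^{2p})+C\int_0^t E\[\sup_{0\leqslant u\leqslant s}\|X_n(u)\|^{2p}\]\,ds,
\end{eqnarray*}
where I have used \eqref{assumption2} and the bound $\|X_n(\cdot\wedge\eta_n(s))\|_{C([0,\eta_n(s)];{\mathbb R}^d)}\leqslant \sup_{0\leqslant u\leqslant s}\|X_n(u)\|$. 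A standard truncation argument (e.g., stopping at the first exit from a ball and then letting the radius tend to infinity) ensures the left-hand side is finite before invoking Gr\"onwall, and Gr\"onwall then yields a bound independent of $n$.

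For the one-step increment estimate, fix $0\leqslant k\leqslant n-1$ and write, for $kT/n\leqslant s<(k+1)T/n$,
\begin{eqnarray*}
X_n(s)-X_n(\tfrac{kT}{n})=b(\tfrac{kT}{n},X_n(\cdot\wedge\tfrac{kT}{n}))(s-\tfrac{kT}{n})+\sigma(\tfrac{kT}{n},X_n(\cdot\wedge\tfrac{kT}{n}))(W(s)-W(\tfrac{kT}{n})).
\end{eqnarray*}
Taking $\sup$ over $s\in[kT/n,(k+1)T/n)$, raising to the $2p$-power, applying BDG to the Brownian increment and using \eqref{assumption2} together with the already-proved uniform bound on $\sup_{0\leqslant u\leqslant T}\|X_n(u)\|$, I get
\begin{eqnarray*}
E\[\sup_{\frac{kT}{n}\leqslant s<\frac{(k+1)T}{n}}\|X_n(s)-X_n(\tfrac{kT}{n})\|^{2p}\]
\leqslant C\(n^{-2p}+n^{-p}\)\(1+E\[\sup_{0\leqslant u\leqslant T}\|X_n(u)\|^{2p}\]\),
\end{eqnarray*}
which is $O(n^{-p})$ uniformly in $k$ because the Brownian increment contributes the dominant factor $n^{-p}$ after BDG.

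The main obstacle is not analytical but bookkeeping: one must be careful that the path-dependent argument $X_n(\cdot\wedge\eta_n(s))$ only looks into the past (so $\|X_n(\cdot\wedge\eta_n(s))\|_{C([0,\eta_n(s)];{\mathbb R}^d)}$ is controlled by the running maximum up to time $s$), and one must justify applying Gr\"onwall before knowing a priori that $E[\sup_{0\leqslant s\leqslant t}\|X_n(s)\|^{2p}]$ is finite; both are handled by the standard localization via stopping times $\tau_N=\inf\{s:\|X_n(s)\|\geqslant N\}$ and a monotone convergence argument letting $N\to\infty$. The hypothesis \eqref{assumption3} on the time regularity of the coefficients is not needed for these two estimates; it will enter later when one compares $X_n$ with $X$ to obtain \eqref{e2}.
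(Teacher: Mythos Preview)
Your argument is correct and follows essentially the same route as the paper: rewrite $X_n(t)$ as a single integral equation on $[0,t]$, apply BDG and the linear-growth bound \eqref{assumption2} to set up a Gr\"onwall inequality for the first estimate, and then use the one-step increment formula together with BDG and the uniform moment bound to obtain the $n^{-2p}+n^{-p}=O(n^{-p})$ estimate for the second. Your additional remarks on localization before Gr\"onwall and on \eqref{assumption3} being unused here are accurate refinements that the paper leaves implicit.
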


\begin{proof}
It is well known that the conditions (\ref{assumption1}) and (\ref{assumption2}) ensure the existence and uniqueness of the solution of SDE (\ref{SDE2}). Since
\begin{eqnarray*}
X_n(t)=X_n(\eta_n(t))+\int_{\eta_n(t)}^tb({\eta_n(s)},X_n(\cdot\wedge\eta_n(s)))ds+\int_{\eta_n(t)}^t
\sigma(\eta_n(s),X_n(\cdot\wedge\eta_n(s)))dW(s),
\end{eqnarray*}
we have
\begin{eqnarray*}
X_n(t)=x+\int_0^tb(\eta_n(s),X_n(\cdot\wedge\eta_n(s)))ds+\int_0^t\sigma(\eta_n(s),X_n(\cdot\wedge\eta_n(s)))dW(s).
\end{eqnarray*}
Consequently
\begin{eqnarray*}
E[\sup_{0\leqslant s\leqslant t}\|X_n(s)\|^{2p}]&\leqslant& 2^{2p-1}E[\sup_{0\leqslant s\leqslant t}\|\int_0^sb(\eta_n(u),X_n(\cdot\wedge\eta_n(u)))du\|^{2p}]\\
&&{}+2^{2p-1}E[\sup_{0\leqslant s\leqslant t}\|\int_0^s\sigma(\eta_n(u),X_n(\cdot\wedge\eta_n(u)))dW(u)\|^{2p}]\\
&\leqslant& 2^{2p-1}t^{2p-1}C_{2p}C_0\int_0^tE[(1+\sup_{0\leqslant u\leqslant s}\|X_n(u)\|)^{2p}]ds\\
&&{}+2^{2p-1}t^{p-1}C_{2p}C_0\int_0^tE[(1+\sup_{0\leqslant u\leqslant s}\|X_n(u)\|)^{2p}]ds.
\end{eqnarray*}
Thus by Gronwall's lemma, the proof of the first conclusion is completed. For the second conclusion, we proceed as follows. Since
\begin{eqnarray*}
X_n(t)-X_n(\eta_n(t))=\int_{\eta_n(t)}^tb(\eta_n(s),X_n(\cdot\wedge\eta_n(s)))ds+\int_{\eta_n(t)}^t
\sigma(\eta_n(s),X_n(\cdot\wedge\eta_n(s)))dW(s),
\end{eqnarray*}
for fixed $k$, where $k=0,1,\cdots,n-1$, using BDG's inequality and (\ref{assumption2}), we have
\begin{eqnarray*}
&&E[\sup_{\frac{kT}{n}\leqslant s<\frac{(k+1)T}{n}}\|X_n(s)-X_n(\eta_n(s))\|^{2p}]\\
&\leqslant& 2^{2p-1}E[\sup_{\frac{kT}{n}\leqslant s<\frac{(k+1)T}{n}}\|\int_{\frac{kT}{n}}^sb(\eta_n(u),X_n(\cdot\wedge\eta_n(u)))du\|^{2p}]\\
&&{}+2^{2p-1}E[\sup_{\frac{kT}{n}\leqslant s<\frac{(k+1)T}{n}}\|\int_{\frac{kT}{n}}^s\sigma(\eta_n(u),
X_n(\cdot\wedge\eta_n(u)))dW(u)\|^{2p}]\\
&\leqslant& 2^{2p-1}n^{-2p+1}\int_{\frac{kT}{n}}^{\frac{(k+1)T}{n}}E[\|b(\eta_n(s),X_n(\cdot\wedge\eta_n(s)))\|^{2p}]ds\\
&&{}+2^{2p-1}n^{-p+1}C_{2p}\int_{\frac{kT}{n}}^{\frac{(k+1)T}{n}}E[\|\sigma(\eta_n(s),X_n(\cdot\wedge\eta_n(s)))\|^{2p}]ds\\
&\leqslant& C_1n^{-2p}+C_2n^{-p},
\end{eqnarray*}
and the proof is completed.
\end{proof}

\begin{proposition}\label{strong approximation}
Suppose that the coefficients $(\sigma,b)$ of SDE (\ref{SDE2}) satisfy the conditions of Proposition \ref{estimate1}. Then we have
\begin{eqnarray*}
(E[\|X_n-X\|_{C([0,t];{\mathbb R}^d)}^p])^{1/p}=O(n^{-1/2})
\end{eqnarray*}
for all $p>1$ and $t\in[0,T]$.
\end{proposition}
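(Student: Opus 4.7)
The plan is to set $\phi_n(t) := E[\sup_{0 \leqslant u \leqslant t} \|X_n(u) - X(u)\|^p]$ and derive a Gronwall-type inequality $\phi_n(t) \leqslant C n^{-p/2} + C \int_0^t \phi_n(s)\,ds$, from which Gronwall's lemma yields $\phi_n(t) \leqslant C n^{-p/2}$ uniformly in $t \in [0,T]$, giving the claim. Using the integral form of both SDEs (as rewritten in the proof of Proposition \ref{estimate1}),
\begin{eqnarray*}
X_n(t) - X(t) = \int_0^t \Delta b_n(s)\,ds + \int_0^t \Delta\sigma_n(s)\,dW(s),
\end{eqnarray*}
where $\Delta b_n(s) := b(\eta_n(s), X_n(\cdot\wedge\eta_n(s))) - b(s, X(\cdot))$ and $\Delta\sigma_n(s)$ is defined analogously. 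Jensen's inequality on the drift integral and the Burkholder--Davis--Gundy inequality on the stochastic integral reduce the estimation of $\phi_n(t)$ to controlling $\int_0^t E[\|\Delta b_n(s)\|^p + \|\Delta\sigma_n(s)\|^p]\,ds$.

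To bound $\|\Delta b_n(s)\|$ (and $\|\Delta\sigma_n(s)\|$) I would insert two intermediate terms and split each difference into three pieces: a time-discretization piece $b(\eta_n(s), X_n(\cdot\wedge\eta_n(s))) - b(s, X_n(\cdot\wedge\eta_n(s)))$; a path-freezing piece $b(s, X_n(\cdot\wedge\eta_n(s))) - b(s, X_n(\cdot))$; and a genuine approximation piece $b(s, X_n(\cdot)) - b(s, X(\cdot))$. Assumption (\ref{assumption3}) bounds the first by $C_0 |s - \eta_n(s)|^{1/2}(1 + \|X_n\|_{C([0,T];{\mathbb R}^d)}) \leqslant C_0(T/n)^{1/2}(1 + \|X_n\|_{C([0,T];{\mathbb R}^d)})$, whose $L^p$-norm is $O(n^{-1/2})$ by the first conclusion of Proposition \ref{estimate1}. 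Assumption (\ref{assumption1}) bounds the second by $\|X_n(\cdot\wedge\eta_n(s)) - X_n(\cdot)\|_{C([0,s];{\mathbb R}^d)}$, which equals $\sup_{\eta_n(s) \leqslant u \leqslant s}\|X_n(u) - X_n(\eta_n(s))\|$ and so has $L^p$-norm $O(n^{-1/2})$ by the second conclusion of the same proposition. Finally, (\ref{assumption1}) also controls the third piece by $\sup_{0 \leqslant u \leqslant s}\|X_n(u) - X(u)\|$, the $p$-th moment of which is exactly $\phi_n(s)$.

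Combining the three bounds gives $E[\|\Delta b_n(s)\|^p + \|\Delta\sigma_n(s)\|^p] \leqslant C n^{-p/2} + C \phi_n(s)$; integrating in $s$, applying Jensen and BDG yields the target inequality for $\phi_n$, and Gronwall closes the argument. The only non-routine point is verifying the path-freezing bound uniformly in $s$: since $X_n(\cdot\wedge\eta_n(s))$ differs from $X_n(\cdot)$ only on the short interval $(\eta_n(s), s]$, where the difference is $X_n(u) - X_n(\eta_n(s))$, a direct appeal to the second conclusion of Proposition \ref{estimate1} suffices. Once this bookkeeping is settled the proof is a straightforward extension of the classical Markovian Euler-scheme analysis to the path-dependent setting.
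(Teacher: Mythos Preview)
Your proposal is correct and follows essentially the same route as the paper. Both arguments use the global integral representation of $X_n$ (established in the proof of Proposition~\ref{estimate1}), split the integrand into the same three pieces---time-discretization (handled by (\ref{assumption3}) and the first conclusion of Proposition~\ref{estimate1}), path-freezing (handled by (\ref{assumption1}) and the second conclusion), and genuine approximation (producing the Gronwall term)---and close with BDG plus Gronwall; the paper merely organizes the bookkeeping slightly differently by first isolating the genuine-approximation piece via a step-by-step iteration and collecting the rest into a remainder $\sum_k\Lambda_{kT/n,(k+1)T/n\wedge t}$ before splitting.
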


\begin{proof}
\begin{eqnarray*}
&&X_n(t)-X(t)\\
&=&X_n(\eta_n(t))-X(\eta_n(t))+\int_{\eta_n(t)}^tb(\eta_n(s),X_n(\cdot\wedge\eta_n(s)))ds-\int_{\eta_n(t)}^tb(s,X(\cdot))ds\\
&&{}+\int_{\eta_n(t)}^t\sigma(\eta_n(s),X_n(\cdot\wedge\eta_n(s)))dW(s)-\int_{\eta_n(t)}^t\sigma(s,X(\cdot))dW(s)\\
&=&X_n(\eta_n(t))-X(\eta_n(t))+\int_{\eta_n(t)}^t(b(s,X_n(\cdot))-b(s,X(\cdot)))ds\\
&&{}+\int_{\eta_n(t)}^t(\sigma(s,X_n(\cdot))-\sigma(s,X(\cdot)))dW(s)\\
&&{}+\Lambda_{\eta_n(t),t},
\end{eqnarray*}
where
\begin{eqnarray*}
\Lambda_{\eta_n(t),t}&:=&\int_{\eta_n(t)}^t(b(\eta_n(s),X_n(\cdot\wedge\eta_n(s)))-b(s,X_n(\cdot\wedge\eta_n(s))))ds\\
&&{}+\int_{\eta_n(t)}^t(b(s,X_n(\cdot\wedge\eta_n(s)))-b(s,X_n(\cdot)))ds\\
&&{}+\int_{\eta_n(t)}^t(\sigma(\eta_n(s),X_n(\cdot\wedge\eta_n(s)))-\sigma(s,X_n(\cdot\wedge\eta_n(s))))dW(s)\\
&&{}+\int_{\eta_n(t)}^t(\sigma(s,X_n(\cdot\wedge\eta_n(s)))-\sigma(s,X_n(\cdot)))dW(s).
\end{eqnarray*}
Repeatedly using the above formula, we obtain
\begin{eqnarray*}
X_n(t)-X(t)&=&\int_0^t(b(s,X_n(\cdot))-b(s,X(\cdot)))ds\\
&&{}+\int_0^t(\sigma(s,X_n(\cdot))-\sigma(s,X(\cdot)))dW(s)\\
&&{}+\sum_{k=0}^{k_n(t)}\Lambda_{\frac{kT}{n},\frac{(k+1)T}{n}\wedge t}.
\end{eqnarray*}
Then using BDG's inequality and (\ref{assumption1}), we have
\begin{eqnarray*}
&&E[\|X_n-X\|_{C([0,t];{\mathbb R}^d)}^{2p}]\\
&\leqslant&C_1\int_0^tE[\|X_n-X\|_{C([0,s];{\mathbb R}^d)}^{2p}]ds+C_2E[\sup_{0\leqslant s\leqslant t}\|\sum_{k=0}^{k_n(s)}
\Lambda_{\frac{kT}{n},\frac{(k+1)T}{n}\wedge s}\|^{2p}].
\end{eqnarray*}
For the last term we proceed as follows. First we have
\begin{eqnarray*}
E[\sup_{0\leqslant s\leqslant t}\|\sum_{k=0}^{k_n(s)}\Lambda_{\frac{kT}{n},\frac{(k+1)T}{n}\wedge s}\|^{2p}]
\leqslant C(\Lambda_1+\Lambda_2+\Lambda_3+\Lambda_4),
\end{eqnarray*}
where
\begin{eqnarray*}
\Lambda_1&=&E[\sup_{0\leqslant s\leqslant t}\|\int_0^s(b(\eta_n(u),X_n(\cdot\wedge\eta_n(u)))
-b(u,X_n(\cdot\wedge\eta_n(u))))du\|^{2p}],\\
\Lambda_2&=&E[\sup_{0\leqslant s\leqslant t}\|\int_0^s(b(u,X_n(\cdot\wedge\eta_n(u)))-b(u,X_n(\cdot)))du\|^{2p}],\\
\Lambda_3&=&E[\sup_{0\leqslant s\leqslant t}\|\int_0^s(\sigma(\eta_n(u),X_n(\cdot\wedge\eta_n(u)))
-\sigma(u,X_n(\cdot\wedge\eta_n(u))))dW(u)\|^{2p}],\\
\Lambda_4&=&E[\sup_{0\leqslant s\leqslant t}\|\int_0^s(\sigma(u,X_n(\cdot\wedge\eta_n(u)))-\sigma(u,X_n(\cdot)))dW(u)\|^{2p}].\\
\end{eqnarray*}
In view of BDG's inequality and (\ref{assumption3}), the terms $\Lambda_1$ and $\Lambda_3$ are dominated by $Cn^{-p}$. Next we estimate the term $\Lambda_4$. In view of BDG's inequality and (\ref{assumption1}), we have
\begin{eqnarray*}
\Lambda_4&=&E[\sup_{0\leqslant s\leqslant t}\|\int_0^s(\sigma(u,X_n(\cdot\wedge\eta_n(u)))-\sigma(u,X_n(\cdot)))dW(u)\|^{2p}]\\
&\leqslant& C_{2p}t^{p-1}\int_0^tE[\|\sigma(s,X_n(\cdot\wedge\eta_n(s)))-\sigma(s,X_n(\cdot))\|^{2p}]ds\\
&\leqslant& C_{2p}C_0t^{p-1}\int_0^tE[\sup_{0\leqslant u\leqslant s}\|X_n(u\wedge\eta_n(s))-X_n(u)\|^{2p}]ds.\\
\end{eqnarray*}
By Proposition \ref{estimate1}, the above expression is dominated by $C'n^{-p}$, where $C'=C_{2p}C_0Ct^p$. The term $\Lambda_2$ can be estimated similarly. Thus combining  the above estimates, we have
\begin{eqnarray*}
E[\|X_n-X\|_{C([0,t];{\mathbb R}^d)}^{2p}]\leqslant C_1\int_0^t E[\|X_n-X\|_{C([0,s];{\mathbb R}^d)}^{2p}]ds+C_2n^{-p}+C_3n^{-2p}.
\end{eqnarray*}
Then the proof is completed by the Gronwall's lemma.
\end{proof}

We denote by $D_t^jF,t\in[0,T],j=1,\cdots,m$, the derivative of a random variable $F$ as an element of
$\mathbb{L}^2([0,T]\times B;{\mathbb R}^m)\cong \mathbb{L}^2(B;{\mathbb H})$. Here the separable Hilbert space ${\mathbb H}$ is an ${\mathbb L}^2$ space of the form ${\mathbb H}={\mathbb L}^2([0,T],{\mathbb R}^m)$. Similarly we denote by $D_{t_1,\cdots,t_n}^{j_1,\cdots,j_n}F$ the $n$-th derivative of $F$. Before the proof of our main proposition, we need the following proposition.
\begin{proposition}\label{estimate2}
Suppose that the coefficients $(\sigma,b)$ of SDE (\ref{SDE2}) satisfy the assumption (A.I), then for any $p>1$, we have
\begin{eqnarray*}
\|X(\cdot,x)\|_{k,p,T;{\mathbb R}^d}\vee\sup_{n\geqslant 1}\|X_n(\cdot,x)\|_{k,p,T;{\mathbb R}^d}<\infty.
\end{eqnarray*}
\end{proposition}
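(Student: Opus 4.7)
The plan is to induct on the Malliavin derivative order $j$, $0\leqslant j\leqslant k$, establishing (uniformly in $n$, and once more for $X$ itself) the estimate $\mathbb{E}[\sup_{t\in[0,T]}\|D^j X_n(t)\|_{{\mathbb H}^{\otimes j}\otimes{\mathbb R}^d}^p]<\infty$ for every $p>1$. The base case $j=0$ is Proposition \ref{estimate1}, since assumption (A.I) via (\ref{a1}) with $\gamma_0=1$ supplies the required linear growth and Lipschitz hypotheses.

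For $j=1$, I would differentiate the SDE (\ref{SDE2}) and invoke the path-functional chain rule permitted by (A.I)(i). This yields, for $r\leqslant t$ and $l=1,\cdots,m$, the linear equation
\begin{eqnarray*}
D_r^l X(t)&=&\sigma_{\cdot l}(r,X(\cdot))+\int_r^t b(s)^{(1)}(X(\cdot))(D_r^l X(\cdot)|_{[0,s]})\,ds\\
&&{}+\int_r^t \sigma(s)^{(1)}(X(\cdot))(D_r^l X(\cdot)|_{[0,s]})\,dW(s).
\end{eqnarray*}
Because (\ref{a1}) with $\gamma_1=0$ bounds $b^{(1)},\sigma^{(1)}$ uniformly, an application of BDG and Gronwall gives $\mathbb{E}[\sup_t\|D_r X(t)\|^p]\leqslant C$ independently of $r$, and integrating over $r$ then controls the Hilbert--Schmidt norm of $DX(t)$ in $L^p$. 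For higher $j$, repeated Malliavin differentiation together with the path-functional chain rule produces a linear inhomogeneous SDE for $D^j X(t)$ whose inhomogeneity is a finite sum of terms of the schematic form $\sigma(s)^{(i)}(X)(D^{\alpha_1}X,\cdots,D^{\alpha_i}X)$ with $\alpha_1+\cdots+\alpha_i=j$ and each $|\alpha_\ell|<j$. By the induction hypothesis these lower-order derivatives have bounded moments of every order, and $b^{(i)},\sigma^{(i)}$ are uniformly bounded for $1\leqslant i\leqslant k$ by (\ref{a1}); BDG plus Gronwall closes the induction.

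For the Euler scheme, the same differentiation is carried out piecewise on $[kT/n,(k+1)T/n]$. On this interval $X_n(t)$ depends on the Brownian path only through $W(t)-W(kT/n)$ and through $X_n(\cdot\wedge kT/n)$, which is $\mathcal{F}_{kT/n}$-measurable, so Malliavin differentiation produces a discrete recursion expressing $D^j X_n((k+1)T/n)$ in terms of $D^i X_n(kT/n)$ for $0\leqslant i\leqslant j$, the frozen Fr\'echet derivatives $b^{(i)},\sigma^{(i)}$ at $(kT/n,X_n(\cdot\wedge kT/n))$, and the increment $W((k+1)T/n)-W(kT/n)$, whose $L^p$-norm is $O(n^{-1/2})$. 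A discrete Gronwall lemma applied to this recursion gives a bound independent of $n$.

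The main obstacle is the bookkeeping of the path-functional chain rule. The derivatives $b^{(i)},\sigma^{(i)}$ prescribed by (A.I)(i) are multilinear forms on $C([0,s];{\mathbb R}^d)^{\otimes i}$ rather than ordinary gradients in ${\mathbb R}^d$, and their composition with Malliavin derivatives $D^{\alpha_\ell}X(\cdot)$ (which are themselves ${\mathbb H}^{\otimes \alpha_\ell}\otimes{\mathbb R}^d$-valued random paths) has to be unambiguously identified via (\ref{high order derivative}) so that one obtains a genuine linear SDE in $D^j X(t)$ whose driving term admits the induction-friendly bound. Once this identification is in place the remaining analytical steps are routine BDG and Gronwall iterations.
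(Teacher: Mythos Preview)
Your proposal is correct and follows essentially the same strategy as the paper: differentiate the SDE via the Kusuoka--Stroock path-functional chain rule (which is exactly what the paper cites as \cite{KS}), obtain a linear equation for $D^jX$ with bounded coefficients thanks to (\ref{a1}) with $\gamma_n=0$ for $n\geqslant1$, and close by BDG plus Gronwall; the paper only writes out $j=1$ and declares higher orders similar, whereas you spell out the induction. The one minor methodological difference is that for the Euler scheme the paper telescopes the piecewise recursion into a single integral equation over $[0,t]$ (with coefficients frozen at $\eta_n(s)$) and then applies the \emph{continuous} BDG/Gronwall argument, which makes the uniformity in $n$ immediate; your discrete-Gronwall route also works but requires slightly more care to see that the constants do not blow up with $n$.
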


\begin{proof}
We only give a proof of the case of the first order derivative, and the cases of higher order derivatives can be given in a similar way. By results in Kusuoka and Stroock \cite{KS}, we have
\begin{eqnarray*}
D_r^jX_n^i(t)&=&D_r^jX_n^i(\eta_n(t))+\sigma_j^i(\eta_n(r),X_n(\cdot\wedge\eta_n(r)))1_{(\eta_n(t),t]}(r)\\
&&{}+\int_{\eta_n(t)}^t (b^i)^{\alpha_k}(\eta_n(s),X_n(\cdot\wedge\eta_n(s)))D_r^jX_n^k(*\wedge\eta_n(s))ds\\
&&{}+\int_{\eta_n(t)}^t (\sigma_l^i)^{\alpha_k}(\eta_n(s),X_n(\cdot\wedge\eta_n(s)))D_r^jX_n^k(*\wedge\eta_n(s))dW(s)^l,
\end{eqnarray*}
where $(b^{i})^{\alpha_k}$ and $(\sigma_l^i)^{\alpha_k}$ are defined in (\ref{high order derivative}),
$\alpha_k=(0,\cdots,0,1,0,\cdots,0)$.
Repeatedly using the above formula, we have
\begin{eqnarray*}
D_r^jX_n^i(t)&=&\sum_{k=0}^{k_n(t)}\sigma_j^i(\eta_n(r),X_n(\cdot\wedge\eta_n(r)))1_{(\frac{kT}{n},\frac{(k+1)T}{n}]}(r)\\
&&{}+\int_0^t (b^i)^{\alpha_k}(\eta_n(s),X_n(\cdot\wedge\eta_n(s)))D_r^jX_n^k(*\wedge\eta_n(s))ds\\
&&{}+\int_0^t (\sigma_l^i)^{\alpha_k}(\eta_n(s),X_n(\cdot\wedge\eta_n(s)))D_r^jX_n^k(*\wedge\eta_n(s))dW(s)^l.
\end{eqnarray*}
Thus using BDG's inequality and (\ref{a1}), we obtain
\begin{eqnarray*}
\sup_{n\geqslant 1}\sup_{0\leqslant r\leqslant T}E[\sup_{0\leqslant s\leqslant t}\|D_r^jX_n(s)\|^{2p}]<\infty
\end{eqnarray*}
for all $p>1$, $t\in[0,T]$ and $j=1,\cdots,m$. Similarly, we also have
\begin{eqnarray*}
\sup_{0\leqslant r\leqslant T}E[\sup_{0\leqslant s\leqslant t}\|D_r^jX(s)\|^{2p}]<\infty
\end{eqnarray*}
for all $p>1$, $t\in[0,T]$ and $j=1,\cdots,m$, and the proof is thus established.
\end{proof}

\begin{proposition}\label{estimate3}
Suppose that the coefficients $(\sigma,b)$ of SDE (\ref{SDE2}) satisfy the assumption (A.I), then for any $p>1$, we have
\begin{eqnarray*}
\|X_n(\cdot,x)-X(\cdot,x)\|_{k,p,T;{\mathbb R}^d}=O(n^{-1/2}).
\end{eqnarray*}
\end{proposition}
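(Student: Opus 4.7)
The plan is to prove the $O(n^{-1/2})$ rate separately for each order of Malliavin derivative $j=0,1,\ldots,k$ appearing in the norm \eqref{soblev norm}, and to proceed by induction on $j$. The base case $j=0$ is exactly Proposition \ref{strong approximation}.

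For the inductive step, I first write down the SDEs satisfied by $D^{j}X(t)$ and $D^{j}X_n(t)$. Iteratively differentiating \eqref{SDE2} in the Malliavin sense, using the Fr\'echet derivatives provided by assumption (A.I), yields an SDE for $D^{j}X$ whose drift and diffusion are linear in $D^{j}X$ with coefficients built from the first Fr\'echet derivatives of $b$ and $\sigma$, plus an inhomogeneous forcing term assembled from the lower-order Malliavin derivatives $D^{j'}X$, $j'<j$, and from $b^{(\alpha)}$, $\sigma^{(\alpha)}$ with $|\alpha|\leq j$. Differentiating the Euler scheme analogously gives the corresponding equation for $D^{j}X_n$, as was already done in the proof of Proposition \ref{estimate2} for $j=1$.

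Next, I subtract the two equations and bound $E[\sup_{s\leq t}\|D^{j}X_n(s)-D^{j}X(s)\|^{2p}]$ via a decomposition mirroring the proof of Proposition \ref{strong approximation}. The difference splits into three kinds of terms: (a) an integral involving $D^{j}X_n-D^{j}X$ itself with coefficients bounded by (A.I); (b) discretization errors produced by replacing $s$ with $\eta_n(s)$ inside the coefficients and inside their path arguments; and (c) lower-order contributions of the form $D^{j'}(X_n-X)$, $j'<j$, paired with factors that are uniformly bounded in $n$. Part (a) is absorbed by Gronwall's lemma. For part (b) the standard computation used in the proof of Proposition \ref{strong approximation}, combined with the $1/2$-H\"older continuity in time from (A.I)(iii) and the path regularity estimates of Proposition \ref{estimate1}, yields the rate $n^{-1/2}$. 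For part (c) the inductive hypothesis supplies the same rate. Throughout, Proposition \ref{estimate2} is invoked to guarantee that every Malliavin derivative appearing as a multiplicative factor has moments bounded uniformly in $n$.

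The main obstacle is the combinatorial bookkeeping involved in taking the $j$-th Malliavin derivative of the compositions $s\mapsto b(s,X(\cdot))$ and $s\mapsto\sigma(s,X(\cdot))$: each such derivative is a Fa\`a di Bruno--type sum over partitions of the differentiation variables, evaluated on products of lower-order Malliavin derivatives of $X$ through the Fr\'echet derivatives $b^{(\alpha)},\sigma^{(\alpha)}$ defined via \eqref{high order derivative}. One must organize this expansion carefully and verify that each summand inherits the $O(n^{-1/2})$ rate, either directly from (A.I)(iii) together with Proposition \ref{estimate1}, or from the inductive hypothesis applied to a strictly lower-order difference. Once this expansion is set up, the remaining estimate is a routine application of the BDG and H\"older inequalities followed by Gronwall's lemma, applied term by term exactly as in Proposition \ref{strong approximation}.
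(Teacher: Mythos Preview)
Your proposal is correct and follows essentially the same approach as the paper, which carries out the case $j=1$ in detail via a nine-term decomposition of the remainder (your parts (b) and (c)) and then asserts that higher orders proceed in the same way by induction. One refinement: the discretization term in which $D^{j}X_n(\cdot\wedge\eta_n(s))$ is replaced by $D^{j}X_n(\cdot)$ requires an increment estimate for $D^{j}X_n$ itself over a single mesh interval, which is not supplied by Proposition \ref{estimate1} (that covers only $X_n$) but must be derived directly from the equation for $D^{j}X_n$, as the paper does in its treatment of $\Xi_7$.
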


\begin{proof}
First by results in Kusuoka and Stroock \cite{KS}, we have
\begin{eqnarray*}
D_r^jX^i(t)&=&D_r^jX^i(\eta_n(t))+\sigma_j^i(r,X(\cdot))1_{(\eta_n(t),t]}(r)\\
&&{}+\int_{\eta_n(t)}^t (b^i)^{\alpha_k}(s,X(\cdot))D_r^jX^k(*)ds+\int_{\eta_n(t)}^t (\sigma_l^i)^{\alpha_k}(s,X(\cdot))
D_r^jX^k(*)dW(s)^l
\end{eqnarray*}
and
\begin{eqnarray*}
D_r^jX_n^i(t)&=&D_r^jX_n^i(\eta_n(t))+\sigma_j^i(\eta_n(r),X_n(\cdot\wedge\eta_n(r)))1_{(\eta_n(t),t]}(r)\\
&&{}+\int_{\eta_n(t)}^t (b^i)^{\alpha_k}(\eta_n(s),X_n(\cdot\wedge\eta_n(s)))D_r^jX_n^k(*\wedge\eta_n(s))ds\\
&&{}+\int_{\eta_n(t)}^t (\sigma_l^i)^{\alpha_k}(\eta_n(s),X_n(\cdot\wedge\eta_n(s)))D_r^jX_n^k(*\wedge\eta_n(s))dW(s)^l.
\end{eqnarray*}
Thus we have
\begin{eqnarray*}
D_r^jX_n^i(t)-D_r^jX^i(t)&=&D_r^jX_n^i(\eta_n(t))-D_r^jX^i(\eta_n(t))\\
&&{}+\sigma_j^i(\eta_n(r),X_n(\cdot\wedge\eta_n(r)))1_{(\eta_n(t),t]}(r)-\sigma_j^i(r,X(\cdot))1_{(\eta_n(t),t]}(r)\\
&&{}+\int_{\eta_n(t)}^t (b^i)^{\alpha_k}(\eta_n(s),X_n(\cdot\wedge\eta_n(s)))D_r^jX_n^k(*\wedge\eta_n(s))ds\\
&&{}-\int_{\eta_n(t)}^t (b^i)^{\alpha_k}(s,X(\cdot))D_r^jX^k(*)ds\\
&&{}+\int_{\eta_n(t)}^t (\sigma_l^i)^{\alpha_k}(\eta_n(s),X_n(\cdot\wedge\eta_n(s)))D_r^jX_n^k(*\wedge\eta_n(s))dW(s)^l\\
&&{}-\int_{\eta_n(t)}^t (\sigma_l^i)^{\alpha_k}(s,X(\cdot))D_r^jX^k(*)dW(s)^l\\
&=&D_r^jX_n^i(\eta_n(t))-D_r^jX^i(\eta_n(t))\\
&&{}+\int_{\eta_n(t)}^t (b^i)^{\alpha_k}(s,X(\cdot))(D_r^jX_n^k(*)-D_r^jX^k(*))ds\\
&&{}+\int_{\eta_n(t)}^t (\sigma_l^i)^{\alpha_k}(s,X(\cdot))(D_r^jX_n^k(*)-D_r^jX^k(*))dW(s)^l\\
&&{}+\Lambda_{\eta_n(t),t}(r),
\end{eqnarray*}
where $\Lambda_{\eta_n(t),t}(r)$ denotes the remaining terms. Repeatedly using the above formula, we obtain
\begin{eqnarray*}
D_r^jX_n^i(t)-D_r^jX^i(t)&=&\int_0^t (b^i)^{\alpha_k}(s,X(\cdot))(D_r^jX_n^k(*)-D_r^jX^k(*))ds\\
&&{}+\int_0^t (\sigma_l^i)^{\alpha_k}(s,X(\cdot))(D_r^jX_n^k(*)-D_r^jX^k(*))dW(s)^l\\
&&{}+\sum_{k=0}^{k_n(t)}\Lambda_{\frac{kT}{n},\frac{(k+1)T}{n}\wedge t}(r)\\
&=&\int_0^t (b^i)^{\alpha_k}(s,X(\cdot))(D_r^jX_n^k(*)-D_r^jX^k(*))ds\\
&&{}+\int_0^t (\sigma_l^i)^{\alpha_k}(s,X(\cdot))(D_r^jX_n^k(*)-D_r^jX^k(*))dW(s)^l\\
&&{}+\sum_{m=1}^9\Lambda_m(t,r),
\end{eqnarray*}
where
\begin{eqnarray*}
\Lambda_1(t,r)&=&\sum_{k=0}^{k_n(t)}[\sigma_j^i(\eta_n(r),X_n(\cdot\wedge\eta_n(r)))1_{(\frac{kT}{n}, \frac{(k+1)T}{n}\wedge t]}
(r)-\sigma_j^i(r,X(\cdot))1_{(\frac{kT}{n}, \frac{(k+1)T}{n}\wedge t]}(r)],\\
\Lambda_2(t,r)&=&\int_0^t((b^i)^{\alpha_k}(\eta_n(s),X_n(\cdot\wedge\eta_n(s)))-(b^i)^{\alpha_k}(s,X_n(\cdot\wedge\eta_n(s))))
D_r^jX_n^k(*\wedge\eta_n(s))ds,\\
\Lambda_3(t,r)&=&\int_0^t(b^i)^{\alpha_k}(s,X_n(\cdot\wedge\eta_n(s)))(D_r^jX_n^k(*\wedge\eta_n(s))-D_r^jX_n^k(*))ds,\\
\Lambda_4(t,r)&=&\int_0^t((b^i)^{\alpha_k}(s,X_n(\cdot\wedge\eta_n(s)))-(b^i)^{\alpha_k}(s,X_n(\cdot)))D_r^jX_n^k(*)ds,\\
\Lambda_5(t,r)&=&\int_0^t((b^i)^{\alpha_k}(s,X_n(\cdot))- (b^i)^{\alpha_k}(s,X(\cdot)))D_r^jX_n^k(*)ds,\\
\Lambda_6(t,r)&=&\int_0^t((\sigma_l^i)^{\alpha_k}(\eta_n(s),X_n(\cdot\wedge\eta_n(s)))-(\sigma_l^i)^{\alpha_k}(s,X_n
(\cdot\wedge\eta_n(s))))D_r^jX_n^k(*\wedge\eta_n(s))dW(s)^l,\\
\Lambda_7(t,r)&=&\int_0^t(\sigma_l^i)^{\alpha_k}(s,X_n(\cdot\wedge\eta_n(s)))
(D_r^jX_n^k(*\wedge\eta_n(s))-D_r^jX_n^k(*))dW(s)^l,\\
\Lambda_8(t,r)&=&\int_0^t((\sigma_l^i)^{\alpha_k}(s,X_n(\cdot\wedge\eta_n(s)))
-(\sigma_l^i)^{\alpha_k}(s,X_n(\cdot)))D_r^jX_n^k(*)dW(s)^l,\\
\Lambda_9(t,r)&=&\int_0^t((\sigma_l^i)^{\alpha_k}(s,X_n(\cdot))- (\sigma_l^i)^{\alpha_k}(s,X(\cdot)))D_r^jX_n^k(*)dW(s)^l.
\end{eqnarray*}
Then by BDG's inequality for Hilbert space valued stochastic integrals (cf. \cite[Lemma 2.1]{KS}) and Assumption (A.I), we have
\begin{eqnarray*}
&&E[\sup_{0\leqslant t\leqslant T}\|DX_n^i(t)-DX^i(t)\|_{\mathbb H}^{2p}]\\
&=&E[\sup_{0\leqslant t\leqslant T}(\int_0^T\|D_rX_n^i(t)-D_rX^i(t)\|^2dr)^p]\\
&\leqslant&3^{2p-1}E[\sup_{0\leqslant t\leqslant T}(\int_0^T(\int_0^t (b^i)^{\alpha_k}(s,X(\cdot))(D_r^jX_n^k(*)-D_r^jX^k(*))ds)^2dr)^p]\\
&&{}+3^{2p-1}E[\sup_{0\leqslant t\leqslant T}(\int_0^T(\int_0^t (\sigma_l^i)^{\alpha_k}(s,X(\cdot))(D_r^jX_n^k(*)
-D_r^jX^k(*))dW(s)^l)^2dr)^p]\\
&&{}+3^{2p-1}E[\sup_{0\leqslant t\leqslant T}(\int_0^T(\sum_{m=1}^9\Lambda_m(t,r))^2dr)^p]\\
&\leqslant& C_1 \int_0^TE[\sup_{0\leqslant s\leqslant t}\|DX_n(s)-DX(s)\|_{{\mathbb H}
\otimes\mathbb R^d}^{2p}]dt+C_2\sum_{m=1}^9E[\sup_{0\leqslant t\leqslant T}
(\int_0^T(\Lambda_m(t,r))^2dr)^p].
\end{eqnarray*}
Let
\begin{eqnarray*}
\Xi_m:=E[\sup_{0\leqslant t\leqslant T}(\int_0^T(\Lambda_m(t,r))^2dr)^p],\quad m=1,\cdots,9.
\end{eqnarray*}
Then to complete the proof, we just have to show each of nine terms $\Xi_m$ are dominated by $Cn^{-p}$. We first observe that
\begin{eqnarray*}
&&\int_0^T(\sum_{k=0}^{k_n(t)}[\sigma_j^i(\eta_n(r),X_n(\cdot\wedge\eta_n(r)))1_{(\frac{kT}{n}, \frac{(k+1)T}{n}\wedge t]}(r)
-\sigma_j^i(r,X(\cdot))1_{(\frac{kT}{n}, \frac{(k+1)T}{n}\wedge t]}(r)])^2dr\\
&=&\int_0^t(\sigma_j^i(\eta_n(r),X_n(\cdot\wedge\eta_n(r))) -\sigma_j^i(r,X(\cdot)))^2dr\\
&\leqslant&3\int_0^t(\sigma_j^i(\eta_n(r),X_n(\cdot\wedge\eta_n(r))) -\sigma_j^i(r,X_n(\cdot\wedge\eta_n(r))))^2dr\\
&&{}+3\int_0^t(\sigma_j^i(r,X_n(\cdot\wedge\eta_n(r)))-\sigma_j^i(r,X_n(\cdot)))^2dr\\
&&{}+3\int_0^t(\sigma_j^i(r,X_n(\cdot)) -\sigma_j^i(r,X(\cdot)))^2dr\\
&\leqslant&3TC_0^2(1+\|X_n(\cdot)\|_{C([0,T];{\mathbb R}^d)})^2n^{-1}+3C_0^2\int_0^t\sup_{0\leqslant u\leqslant r}|X_n(u\wedge\eta_n(r))-X_n(u)|^2dr\\
&&{}+3C_0^2\int_0^t\sup_{0\leqslant u\leqslant r}|X_n(u)-X(u)|^2dr.
\end{eqnarray*}
Then in view of Proposition \ref{estimate1} and Proposition \ref{strong approximation}, we have
\begin{eqnarray*}
\Xi_1&=&E[\sup_{0\leqslant t\leqslant T}(\int_0^T(\sum_{k=0}^{k_n(t)}[\sigma_j^i(\eta_n(r),X_n(\cdot\wedge\eta_n(r)))
1_{(\frac{kT}{n}, \frac{(k+1)T}{n}\wedge t]}(r)\\
&&{}-\sigma_j^i(r,X(\cdot))1_{(\frac{kT}{n},
\frac{(k+1)T}{n}\wedge t]}(r)])^2dr)^p]\\
&\leqslant&3^{p-1}3^pT^pC_0^{2p}E[(1+\|X_n(\cdot)\|_{C([0,T];{\mathbb R}^d)})^{2p}]n^{-p}\\
&&{}+3^{p-1}3^pT^{p-1}C_0^{2p}\int_0^TE[\sup_{0\leqslant u\leqslant r}|X_n(u\wedge\eta_n(r))-X_n(u)|^{2p}]dr\\
&&{}+3^{p-1}3^pT^{p-1}C_0^{2p}\int_0^TE[\sup_{0\leqslant u\leqslant r}|X_n(u)-X(u)|^{2p}]dr\\
&\leqslant&3^{p-1}3^pT^pC_0^{2p}Cn^{-p}+3^{p-1}3^pT^pC_0^{2p}Cn^{-p}+3^{p-1}3^pT^pC_0^{2p}Cn^{-p}\\
&\leqslant&Cn^{-p}.
\end{eqnarray*}
Next we estimate the terms $\Xi_2$, $\Xi_4$, $\Xi_5$, $\Xi_6$, $\Xi_8$ and $\Xi_9$. Using BDG's inequality for Hilbert space valued stochastic integrals, (\ref{a1}), Proposition \ref{estimate1} and Proposition \ref{estimate2},
we obtain
\begin{eqnarray*}
\Xi_8&=&E[\sup_{0\leqslant t\leqslant T}(\int_0^T(\int_0^t((\sigma_l^i)^{\alpha_k}(s,X_n(\cdot\wedge\eta_n(s)))
-(\sigma_l^i)^{\alpha_k}(s,X_n(\cdot)))D_r^jX_n^k(*)dW(s)^l)^2dr)^p]\\
&\leqslant& C_{2p}T^{p-1}\int_0^TE[(\int_0^T(((\sigma_l^i)^{\alpha_k}(t,X_n(\cdot\wedge\eta_n(t)))
-(\sigma_l^i)^{\alpha_k}(t,X_n(\cdot)))D_r^jX_n^k(*))^2dr)^p]dt\\
&\leqslant& C_{2p}T^{2p-2}\int_0^T\int_0^TE[(((\sigma_l^i)^{\alpha_k}(t,X_n(\cdot\wedge\eta_n(t)))
-(\sigma_l^i)^{\alpha_k}(t,X_n(\cdot)))D_r^jX_n^k(*))^{2p}]drdt\\
&\leqslant& C_{2p}T^{2p-1}\int_0^TE[\|(\sigma_l^i)^{\alpha_k}(t,X_n(\cdot\wedge\eta_n(t)))
-(\sigma_l^i)^{\alpha_k}(t,X_n(\cdot))\|_{Hom(C([0,t];{\mathbb R}^d);{\mathbb R})}^{2p}]dt\\
&\leqslant& C_{2p}C_1T^{2p-1}\int_0^TE[\sup_{0\leqslant s\leqslant t}\|X_n(s\wedge\eta_n(t))-X_n(s)\|^{2p}]dt\\
&\leqslant& C_{2p}C_1C'T^{2p}n^{-p}.
\end{eqnarray*}
The term $\Xi_4$ can be estimated similarly. Using BDG's inequality for Hilbert space valued stochastic integrals, (\ref{a1}), Proposition \ref{strong approximation} and Proposition \ref{estimate2}, we have
\begin{eqnarray*}
\Xi_9&=&E[\sup_{0\leqslant t\leqslant T}(\int_0^T(\int_0^t((\sigma_l^i)^{\alpha_k}(s,X_n(\cdot))-
(\sigma_l^i)^{\alpha_k}(s,X(\cdot)))D_r^jX_n^k(*)dW(s)^l)^2dr)^p]\\
&\leqslant&C_{2p}T^{p-1}\int_0^TE[(\int_0^T(((\sigma_l^i)^{\alpha_k}(t,X_n(\cdot))-
(\sigma_l^i)^{\alpha_k}(t,X(\cdot)))D_r^jX_n^k(*))^2dr)^p]dt\\
&\leqslant&C_{2p}T^{2p-2}\int_0^T\int_0^TE[(((\sigma_l^i)^{\alpha_k}(t,X_n(\cdot))-
(\sigma_l^i)^{\alpha_k}(t,X(\cdot)))D_r^jX_n^k(*))^{2p}]drdt\\
&\leqslant&C_{2p}T^{2p-1}\int_0^TE[\|(\sigma_l^i)^{\alpha_k}(t,X_n(\cdot))
-(\sigma_l^i)^{\alpha_k}(t,X(\cdot))\|_{Hom(C([0,t];{\mathbb R}^d);{\mathbb R})}^{2p}]dt\\
&\leqslant& C_{2p}C_1T^{2p-1}\int_0^T E[\sup_{0\leqslant s\leqslant t}\|X_n(s)-X(s)\|^{2p}]dt\\
&\leqslant& C_{2p}C_1CT^{2p}n^{-p}.
\end{eqnarray*}
We can apply the similar way to estimate the term $\Xi_5$. In view of BDG's inequality for Hilbert space valued stochastic integrals and (\ref{a2}), we can also obtain that the terms $\Xi_2$ and $\Xi_6$ are dominated by $Cn^{-p}$. Finally we deal with the terms $\Xi_3$ and $\Xi_7$. Since
\begin{eqnarray*}
D_r^jX_n^i(t)-D_r^jX_n^i(\eta_n(t))&=&\sigma_j^i(\eta_n(r),X_n(\cdot\wedge\eta_n(r)))1_{(\eta_n(t), t]}(r)\\
&&{}+\int_{\eta_n(t)}^t(b^i)^{\alpha_k}(\eta_n(s),X_n(\cdot\wedge\eta_n(s)))D_r^jX_n^k(*\wedge\eta_n(s))ds\\
&&{}+\int_{\eta_n(t)}^t(\sigma_l^i)^{\alpha_k}(\eta_n(s),X_n(\cdot\wedge\eta_n(s)))
D_r^jX_n^k(*\wedge\eta_n(s))dW(s)^l,
\end{eqnarray*}
using BDG's inequality for Hilbert space valued stochastic integrals and (\ref{a1}), we have
\begin{eqnarray*}
&&\Xi_7\\
&=&E[\sup_{0\leqslant t\leqslant T}(\int_0^T(\int_0^t(\sigma_l^i)^{\alpha_k}(s,X_n(\cdot\wedge\eta_n(s)))
(D_r^jX_n^k(*\wedge\eta_n(s))-D_r^jX_n^k(*))dW(s)^l)^2dr)^p]\\
&\leqslant& C_{2p}C_1T^{p-1}\int_0^T E[(\int_0^T((\sigma_l^i)^{\alpha_k}(t,X_n(\cdot\wedge \eta_n(t)))
(D_r^jX_n^k(*\wedge\eta_n(t))-D_r^jX_n^k(*)))^2dr)^p]dt\\
&\leqslant &C_{2p}C_1T^{p-1}\int_0^T E[(\int_0^T\|D_r^jX_n^k(*\wedge\eta_n(t))-D_r^jX_n^k(*)\|
_{C([\eta_n(t),t];{\mathbb R}])}^2dr)^p]dt\\
&\leqslant &3^{2p-1}C_{2p}C_1T^{p-1}\int_0^T E[(\int_0^T\sup_{\eta_n(t)\leqslant s\leqslant t}|\sigma_j^k(\eta_n(r),
X_n(\cdot\wedge\eta_n(r)))1_{(\eta_n(s),s]}(r)|^2dr)^p]dt\\
&&{}+3^{2p-1}C_{2p}C_1T^{p-1}\int_0^T E[(\int_0^T\sup_{\eta_n(t)\leqslant s\leqslant t}|\int_{\eta_n(s)}^s(b^k)^{\alpha_i}
(\eta_n(u),X_n(\cdot\wedge\eta_n(u))) \\
&&{\quad }D_r^jX_n^i(*\wedge\eta_n(u))du|^2dr)^p]dt\\
&&{}+3^{2p-1}C_{2p}C_1T^{p-1}\int_0^T E[(\int_0^T\sup_{\eta_n(t)\leqslant s\leqslant t}|\int_{\eta_n(s)}^s(\sigma_l^k)
^{\alpha_i}(\eta_n(u),X_n(\cdot\wedge\eta_n(u)))\\
&& {\quad }D_r^jX_n^i(*\wedge\eta_n(u))dW(u)^l|^2dr)^p]dt\\
&:=&C(\Xi_{71}+\Xi_{72}+\Xi_{73}).
\end{eqnarray*}
For the term $\Xi_{71}$, we proceed as follows. We notice that
\begin{eqnarray*}
&&\int_0^T\sup_{\eta_n(t)\leqslant s\leqslant t}|\sigma_j^k(\eta_n(r),X_n(\cdot\wedge\eta_n(r)))1_{(\eta_n(s), s]}(r)|^2dr\\
&=&\int_0^T[|\sigma_j^k(\eta_n(r),X_n(\cdot\wedge\eta_n(r)))1_{(\eta_n(t), t]}(r)|^2]dr\\
&=&\int_{\eta_n(t)}^t[|\sigma_j^k(\eta_n(r),X_n(\cdot\wedge\eta_n(r)))|^2]dr\\
&\leqslant &C_0^2(1+\sup_{0\leqslant u\leqslant T}|X_n(u\wedge\eta_n(r))|)^2n^{-1}.
\end{eqnarray*}
Then we obtain
\begin{eqnarray*}
\Xi_{71}\leqslant TC_0^{2p}E[(1+\sup_{0\leqslant u\leqslant T}|X_n(u\wedge\eta_n(r))|)^{2p}]n^{-p}\leqslant Cn^{-p}.
\end{eqnarray*}
Now we consider the term $\Xi_{73}$. By BDG's inequality and Assumption (A.I), we have
\begin{eqnarray*}
&&\Xi_{73}\\
&\leqslant&T^{p-1}\int_0^T\int_0^TE[\sup_{\eta_n(t)\leqslant s\leqslant t}|\int_{\eta_n(s)}^s(\sigma_l^k)^{\alpha_i}
(\eta_n(u),X_n(\cdot\wedge\eta_n(u)))D_r^jX_n^i(*\wedge\eta_n(u))dW(u)^l|^{2p}]drdt\\
&\leqslant&C_{2p}T^{p-1}n^{-p+1}\int_0^T\int_0^T\int_{\eta_n(t)}^tE[|(\sigma_l^k)^{\alpha_i}(\eta_n(s),X_n
(\cdot\wedge\eta_n(s)))D_r^jX_n^i(*\wedge\eta_n(s))|^{2p}]dsdrdt\\
&\leqslant&C_{2p}T^{p-1}n^{-p+1}\int_0^T\int_0^T\int_{\eta_n(t)}^tE[\sup_{0\leqslant u\leqslant \eta_n(s)}|D_r^jX_n^i
(u)|^{2p}]dsdrdt\\
&\leqslant& Cn^{-p}.
\end{eqnarray*}
A similar estimate holds for the term $\Xi_{72}$. The term $\Xi_3$ can be estimated similarly. Thus we have shown
\begin{eqnarray*}
E[\sup_{0\leqslant t\leqslant T}\|DX_n(t)-DX(t)\|_{{\mathbb H}\otimes \mathbb R^d}^{2p}]
\leqslant C_1 \int_0^TE[\sup_{0\leqslant s\leqslant t}\|DX_n(s)-DX(s)\|_{{\mathbb H}\otimes \mathbb R^d}^{2p}]dt+C_2n^{-p}.
\end{eqnarray*}
Then by the Gronwall's lemma, we have
\begin{eqnarray*}
E[\sup_{0\leqslant t\leqslant T}\|DX_n(t)-DX(t)\|_{{\mathbb H}\otimes \mathbb R^d}^{2p}]\leqslant Cn^{-p}.
\end{eqnarray*}
For the case of higher order derivatives, we proceed in the same way.
The proof is therefore completed.
\end{proof}

Now we are ready to give the proof of Theorem \ref{eulerscheme}.
\begin{proof}[Proof of Theorem \ref{eulerscheme}]
The conclusions (\ref{e1}) and (\ref{e2}) have been proved in Proposition \ref{estimate2} and Proposition \ref{estimate3}, respectively. So it remains only to prove (\ref{e3}) and (\ref{e4}). Since
\begin{eqnarray*}
X_n(t)=x+\int_0^tb(\eta_n(s),X_n(\cdot\wedge\eta_n(s)))ds+\int_0^t\sigma(\eta_n(s),X_n(\cdot\wedge\eta_n(s)))dW(s),
\end{eqnarray*}
we can rewrite it in the following form:
\begin{eqnarray*}
X_n(t)=x+\int_0^tb_n(s,X_n(\cdot))ds+\int_0^t\sigma_n(s,X_n(\cdot))dW(s),
\end{eqnarray*}
where
\begin{eqnarray*}
b_n(s,X_n(\cdot)):=b(\eta_n(s),X_n(\cdot\wedge\eta_n(s)))~\text{and}~\sigma_n(s,X_n(\cdot)):=\sigma(\eta_n(s),
X_n(\cdot\wedge\eta_n(s))),
\end{eqnarray*}
and the coefficients $(\sigma_n,b_n)$ also satisfy the uniform elliptic condition. Thus by
\cite[Theorem 3.5, Corollary 3.9]{KS}, we obtain the nondegeneracy of $X(T)$ and $X_n(T)$. Therefore we complete the proof.
\end{proof}

We can now state our main results which extend Theorem \ref{eulerscheme} to the case of interpolation spaces. We shall replace the above assumption (A.I) with the following assumption (A.III).
\begin{itemize}
\item[(A.III)~]
$\sigma:[0,T]\times C([0,T];{\mathbb R}^d)\rightarrow{\mathbb R}^d\otimes{\mathbb R}^m$ satisfies the condition $(A_{1+\delta,{\mathbb R}^d,{\mathbb R}^d\otimes{\mathbb R}^m})$ and $b:[0,T]\times C([0,T];{\mathbb R}^d)\rightarrow{\mathbb R}^d$ satisfies the condition $(A_{1+\delta,{\mathbb R}^d,{\mathbb R}^d})$.
\end{itemize}

\begin{theorem}\label{eulerschemefractionalregularity}
Let $\delta'=k+\theta'$, $k\in{\mathbb N}$, $0<\delta'<\delta$ and $k_n=2^{2n}$. Suppose that the coefficients $(\sigma,b)$ of SDE (\ref{SDE2}) satisfy (A.II) and (A.III). Then we have
\begin{eqnarray}\label{e1f}
\|X(\cdot,x)\|_{\delta',p,T;{\mathbb R}^d}\vee\sup_{n\geqslant 1}\|X_n(\cdot,x)\|_{\delta',p,T;{\mathbb R}^d}<\infty
\end{eqnarray}
and
\begin{eqnarray}\label{e2f}
\|X_{k_n}(\cdot,x)-X(\cdot,x)\|_{\delta',p,T;{\mathbb R}^d}=O(2^{-n\theta}),
\end{eqnarray}
where the norm $\|\cdot\|_{\delta',p,T;{\mathbb R}^d}$ is defined in (\ref{fractional regularity soblev norm}).
\end{theorem}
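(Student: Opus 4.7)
The plan is to use Peetre's $K$-method to reduce the fractional-order claims to the integer-order Theorem \ref{eulerscheme} by approximating the coefficients $(\sigma,b)$ by smoother pairs. Under (A.III), the coefficients lie in the real interpolation space between the classes satisfying $(A_{k+1,\cdot,\cdot})$ and $(A_{k+2,\cdot,\cdot})$ uniformly in $t$. For each $\epsilon\in(0,1)$ I would decompose $(\sigma,b)$ into a rough part (in $C_b^{k+1}$) and a smooth part (in $C_b^{k+2}$), balanced against each other by the $K$-functional, and then apply Theorem \ref{eulerscheme} separately with parameter $k$ (to the original SDE, giving $\mathcal G_k^p$ estimates) and with parameter $k+1$ (to the SDE driven by the smoothed coefficients, giving $\mathcal G_{k+1}^p$ estimates).

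Concretely, for each $\epsilon\in(0,1)$ and each $t\in[0,T]$, the $K$-method applied to $\sigma(t,\cdot)\in(C_b^{1+k},C_b^{2+k})_{\theta,p}$ yields a decomposition $\sigma(t,\cdot)=\sigma_\epsilon^{(0)}(t,\cdot)+\sigma_\epsilon^{(1)}(t,\cdot)$ with $\sigma_\epsilon^{(0)}(t,\cdot)\in C_b^{1+k}$, $\sigma_\epsilon^{(1)}(t,\cdot)\in C_b^{2+k}$ and
\[
\|\sigma_\epsilon^{(0)}(t,\cdot)\|_{C_b^{1+k}}+\epsilon\|\sigma_\epsilon^{(1)}(t,\cdot)\|_{C_b^{2+k}}\leqslant 2K(\epsilon,p,\sigma(t,\cdot)),
\]
and likewise for $b$. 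One can arrange the decomposition (via mollification at scale $\epsilon$ on coefficient space) to be measurable in $t$, to inherit the time-Hölder condition (ii) of $(A_{k+2,\cdot,\cdot})$, and, after adding a harmless $\sqrt{\epsilon}\,I$ correction to $\sigma_\epsilon^{(1)}$, to satisfy the uniform ellipticity (A.II). Denote by $\tilde X^\epsilon$ and $\tilde X_{k_n}^\epsilon$ the exact solution and the Euler scheme of (\ref{SDE2}) for coefficients $(\sigma_\epsilon^{(1)},b_\epsilon^{(1)})$. Invoking Theorem \ref{eulerscheme} with index $k+1$ yields
\[
\|\tilde X^\epsilon\|_{k+1,p,T;\mathbb R^d}\vee\sup_{n\geqslant 1}\|\tilde X_{k_n}^\epsilon\|_{k+1,p,T;\mathbb R^d}\leqslant C_1(\epsilon),\quad \|\tilde X_{k_n}^\epsilon-\tilde X^\epsilon\|_{k+1,p,T;\mathbb R^d}\leqslant C_2(\epsilon)\cdot 2^{-n},
\]
with $C_i(\epsilon)$ polynomial in $\|\sigma_\epsilon^{(1)}\|_{C_b^{2+k}}+\|b_\epsilon^{(1)}\|_{C_b^{2+k}}$. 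In parallel, an adaptation of Propositions \ref{strong approximation}--\ref{estimate3} comparing two SDEs with different coefficients (rather than exact versus Euler) gives
\[
\|X-\tilde X^\epsilon\|_{k,p,T;\mathbb R^d}\vee\sup_{n}\|X_{k_n}-\tilde X_{k_n}^\epsilon\|_{k,p,T;\mathbb R^d}\leqslant C\bigl(\|\sigma_\epsilon^{(0)}\|_{C_b^{1+k}}+\|b_\epsilon^{(0)}\|_{C_b^{1+k}}\bigr).
\]

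For part (\ref{e1f}) take the $K$-decomposition $X=(X-\tilde X^\epsilon)+\tilde X^\epsilon$, so that
\[
K(\epsilon,p,X(\cdot,x))\leqslant \|X-\tilde X^\epsilon\|_{k,p,T;\mathbb R^d}+\epsilon\|\tilde X^\epsilon\|_{k+1,p,T;\mathbb R^d}\leqslant C\cdot K(\epsilon,p,(\sigma,b)),
\]
and integrating $\epsilon^{-\theta'p-1}d\epsilon$ over $(0,1)$ while using that $(\sigma,b)$ satisfies $(A_{1+\delta,\cdot,\cdot})$ and $\theta'<\theta$ gives $\|X\|_{\delta',p,T;\mathbb R^d}<\infty$; the same argument bounds $\|X_{k_n}\|_{\delta',p,T;\mathbb R^d}$ uniformly in $n$. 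For part (\ref{e2f}), decompose
\[
X_{k_n}-X=\bigl[(X_{k_n}-\tilde X_{k_n}^\epsilon)-(X-\tilde X^\epsilon)\bigr]+(\tilde X_{k_n}^\epsilon-\tilde X^\epsilon);
\]
the second summand contributes $\epsilon C_2(\epsilon)\cdot 2^{-n}$ to $K(\epsilon,p,X_{k_n}-X)$, while a refined \emph{bilinear} stability estimate on the first summand---obtained by subtracting the Malliavin-derivative SDEs for $X_{k_n}-X$ and $\tilde X_{k_n}^\epsilon-\tilde X^\epsilon$ and repeating the proof of Proposition \ref{estimate3}---produces a bound $C\cdot K(\epsilon,p,(\sigma,b))\cdot 2^{-n}$. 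Integrating against $\epsilon^{-\theta'p-1}d\epsilon$, splitting the integral at $\epsilon=2^{-n}$, and using $\theta'<\theta$ yields the rate $O(2^{-n\theta})$.

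The main obstacle is the bilinear stability estimate above, which requires a fourfold comparison of Malliavin-derivative systems (exact versus Euler, original versus smoothed coefficients) tracking the coefficient perturbation and the discretization error simultaneously---a substantial but essentially mechanical elaboration of the argument in Proposition \ref{estimate3}. A secondary technical point is the construction in the first step of a $t$-measurable coefficient decomposition that simultaneously preserves the time-Hölder property and the uniform ellipticity; once these ingredients are secured, the remaining $K$-method bookkeeping is routine.
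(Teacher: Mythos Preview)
Your $K$-method strategy---approximate $(\sigma,b)$ by smoother coefficients, solve auxiliary SDEs, and interpolate via Theorem~\ref{eulerscheme}---is the paper's, but the paper makes one decisive simplification you miss, and this lets it bypass your ``bilinear stability estimate'' entirely. When estimating the $K$-functional of $X_{k_n}-X$ at dyadic scale $2^{-m}$, the paper chooses the smoothing level to be $m+n$, not $m$: it defines $X_{0,m}$ and $X_{n,m}$ as the exact and Euler solutions for coefficients $(\sigma_{m+n},b_{m+n})$. With this coupling the $\mathcal G_k^p$-piece of the decomposition, $(X-X_{0,m})-(X_{k_n}-X_{n,m})$, is handled by the \emph{plain triangle inequality}: each of $\|X-X_{0,m}\|_{k,p,T}$ and $\|X_{k_n}-X_{n,m}\|_{k,p,T}$ is $\leqslant C\,2^{-(m+n)\theta}$ by Lipschitz stability in the coefficients, and this already carries the factor $2^{-n\theta}$. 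The $\mathcal G_{k+1}^p$-piece $X_{0,m}-X_{n,m}$ is controlled by Theorem~\ref{eulerscheme} with the constant tracked as $C\,\|\sigma_{m+n}\|_{C_b^{2+k}}\cdot 2^{-n}\leqslant C\,2^{(m+n)(1-\theta)}2^{-n}$ (the Gronwall exponential involves only the uniformly bounded $C_b^{1+k}$-norm); after multiplying by $2^{-m}$ this is again $C\,2^{-(m+n)\theta}$. Summing $2^{\theta'mp}$ over $m$ with $\theta'<\theta$ gives (\ref{e2f}) directly.

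Your bilinear bound $\|(X_{k_n}-\tilde X_{k_n}^\epsilon)-(X-\tilde X^\epsilon)\|_{k,p,T}\leqslant C\,K(\epsilon)\cdot 2^{-n}$ is not a mechanical consequence of Proposition~\ref{estimate3}. Subtracting the two Euler-error equations produces source terms of the form $\bigl[\sigma'(s,\xi_1)-(\sigma_\epsilon^{(1)})'(s,\xi_2)\bigr](\tilde X_{k_n}^\epsilon-\tilde X^\epsilon)$; writing this as $(\sigma_\epsilon^{(0)})'(\xi_1)\cdot O(2^{-n})+\bigl[(\sigma_\epsilon^{(1)})'(\xi_1)-(\sigma_\epsilon^{(1)})'(\xi_2)\bigr]\cdot O(2^{-n})$, the second bracket costs $\|\sigma_\epsilon^{(1)}\|_{C_b^{2+k}}\cdot|\xi_1-\xi_2|\sim (K(\epsilon)/\epsilon)\cdot K(\epsilon)$, which for $\theta\leqslant\tfrac12$ does not even vanish as $\epsilon\to0$. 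The time-H\"older discretization remainders are likewise problematic: there is no reason $\sigma_\epsilon^{(0)}$ should satisfy (\ref{a2}) with a constant of order $K(\epsilon)$. Note also an internal inconsistency: if your bilinear bound held as stated, integrating $\epsilon^{-\theta'p-1}K(\epsilon)^p\,d\epsilon$ would yield rate $O(2^{-n})$, strictly better than the $O(2^{-n\theta})$ you announce, so the ``splitting at $\epsilon=2^{-n}$'' has no role and signals that something upstream is off. I would drop the bilinear route and adopt the coupled smoothing scale $m+n$.
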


\begin{proof}
For simplicity of notations we denote $X_{k_n}$ by $X_n$. We will assume $0<\delta<1$. The general case can be treated in the same way by considering the SDE satisfied by the Malliavin-Shigekawa gradient of $X$ and $X_n$. Since $\sigma:[0,T]\times C([0,T];{\mathbb R}^d)\rightarrow{\mathbb R}^d\otimes{\mathbb R}^m$ satisfies the condition $(A_{1+\delta,{\mathbb R}^d,{\mathbb R}^d\otimes{\mathbb R}^m})$ and $b:[0,T]\times C([0,T];{\mathbb R}^d)\rightarrow{\mathbb R}^d$ satisfies the condition $(A_{1+\delta,{\mathbb R}^d,{\mathbb R}^d})$, there are $\sigma(t)\in(C_b^1(C([0,t];{\mathbb R}^d);{\mathbb R}^d\otimes{\mathbb R}^m),C_b^2(C([0,t];{\mathbb R}^d);{\mathbb R}^d\otimes{\mathbb R}^m))_{\delta,p}$ and $b(t)\in(C_b^1(C([0,t];{\mathbb R}^d);{\mathbb R}^d),C_b^2(C([0,t];{\mathbb R}^d);{\mathbb R}^d))_{\delta,p}$ such that $\sigma(t,\psi)=\sigma(t)(\psi|_{[0,t]})$ and $b(t,\psi)=b(t)(\psi|_{[0,t]})$ for all $\psi\in C([0,T];{\mathbb R}^d)$. By Remark \ref{discrete}, for each $m$, we can find $\sigma_m(t)$ and $b_m(t)$ such that
\begin{eqnarray}\label{d}
\begin{split}
\|\sigma_m(t)-\sigma(t)\|_{C_b^1(C([0,t];{\mathbb R}^d);{\mathbb R}^d\otimes{\mathbb R}^m)}\leqslant C2^{-m\delta},\quad \|\sigma_m(t)\|_{C_b^2(C([0,t];{\mathbb R}^d);{\mathbb R}^d\otimes{\mathbb R}^m)}\leqslant C2^{m(1-\delta)},\\
\|b_m(t)-b(t)\|_{C_b^1(C([0,t];{\mathbb R}^d);{\mathbb R}^d)}\leqslant C2^{-m\delta},\quad \|b_m(t)\|_{C_b^2(C([0,t];{\mathbb R}^d);{\mathbb R}^d)}\leqslant C2^{m(1-\delta)}.
\end{split}
\end{eqnarray}
Now we consider the following SDEs:
\begin{eqnarray*}
X_{0,m}(t)&=&x+\int_0^tb_{m+n}(s,X_{0,m}(\cdot))ds+\int_0^t\sigma_{m+n}(s,X_{0,m}(\cdot))dW(s),\\
X_{n,m}(t)&=&X_{n,m}(\frac{kT}{2^{2n}})+\int_{\frac{kT}{2^{2n}}}^tb_{m+n}(\frac{kT}{2^{2n}},X_{n,m}(\cdot\wedge\frac{kT}{2^{2n}}))ds\\
&&+\int_{\frac{kT}{2^{2n}}}^t\sigma_{m+n}(\frac{kT}{2^{2n}},X_{n,m}(\cdot\wedge\frac{kT}{2^{2n}}))dW(s).
\end{eqnarray*}
Now we prove the second conclusion (\ref{e2f}). By Assumption (A.III), from the proof of Theorem \ref{eulerscheme}, we can obtain
\begin{eqnarray*}
\|X_{0,m}(\cdot)-X_{n,m}(\cdot)\|_{1,p,T;{\mathbb R}^d}^p&\leqslant& C(\|b_{m+n}(T)\|_{C_b^2(C([0,T];{\mathbb R}^d);{\mathbb R}^d)}^p\vee
\|\sigma_{m+n}(T)\|_{C_b^2(C([0,T];{\mathbb R}^d);{\mathbb R}^d\otimes{\mathbb R}^m)}^p)\\
&&e^{C(\|b_{m+n}(T)\|_{C_b^1(C([0,T];{\mathbb R}^d);{\mathbb R}^d)}\vee
\|\sigma_{m+n}(T)\|_{C_b^1(C([0,T];{\mathbb R}^d);{\mathbb R}^d\otimes{\mathbb R}^m)})}2^{-np}.
\end{eqnarray*}
Hence by (\ref{d}) we have
\begin{eqnarray*}
\|X_{0,m}(\cdot)-X_{n,m}(\cdot)\|_{1,p,T;{\mathbb R}^d}^p\leqslant C2^{(m+n)(1-\delta)p}2^{-np}.
\end{eqnarray*}
Since
\begin{eqnarray*}
&&X(t)-X_{0,m}(t)\\
&=&\int_0^t(b(s,X(\cdot))-b_{m+n}(s,X_{0,m}(\cdot)))ds+\int_0^t(\sigma(s,X(\cdot))-\sigma_{m+n}(s,X_{0,m}(\cdot)))dW(s)\\
&=&\int_0^t(b(s,X(\cdot))-b(s,X_{0,m}(\cdot)))ds+\int_0^t(\sigma(s,X(\cdot))-\sigma(s,X_{0,m}(\cdot)))dW(s)\\
&&{}+\int_0^t(b(s,X_{0,m}(\cdot))-b_{m+n}(s,X_{0,m}(\cdot)))ds+\int_0^t(\sigma(s,X_{0,m}(\cdot))-\sigma_{m+n}(s,X_{0,m}(\cdot)))dW(s),\\
\end{eqnarray*}
by BDG's inequality, Assumption (A.III) and (\ref{d}), we have
\begin{eqnarray*}
E[\|X(\cdot)-X_{0,m}(\cdot)\|_{C([0,t];{\mathbb R})}^{p}]\leqslant C_1\int_0^tE[\|X(\cdot)-X_{0,m}(\cdot)\|_{C([0,s];{\mathbb R})}^{p}]ds+C_22^{-(m+n)\delta p}.
\end{eqnarray*}
Therefore we deduce by Gronwall's lemma that
\begin{eqnarray*}
\|X(\cdot)-X_{0,m}(\cdot)\|_{0,p,T;{\mathbb R}^d}^p\leqslant C2^{-(m+n)\delta p}.
\end{eqnarray*}
Similarly, we also have
\begin{eqnarray*}
\|X_n(\cdot)-X_{n,m}(\cdot)\|_{0,p,T;{\mathbb R}^d}^p\leqslant C2^{-(m+n)\delta p}.
\end{eqnarray*}
Combining with the above two inequalities we have by Remark \ref{discrete}
\begin{eqnarray*}
\|X_n(\cdot)-X(\cdot)\|_{\delta',p,T;{\mathbb R}^d}^p&\leqslant&C\sum_{m=0}^{\infty}2^{\delta' mp}(\|(X(\cdot)-X_{0,m}(\cdot))-(X_n(\cdot)-X_{n,m}(\cdot))\|_{0,p,T;{\mathbb R}^d}^p\\
&&{}+2^{-mp}\|X_{0,m}(\cdot)-X_{n,m}(\cdot)\|_{1,p,T;{\mathbb R}^d}^p)\\
&\leqslant&C\sum_{m=0}^{\infty}2^{\delta' mp}(2^{-(m+n)\delta p}+2^{-mp}2^{(m+n)(1-\delta)p}2^{-np})\\
&=&C\sum_{m=0}^{\infty}2^{\delta' mp}(2^{-\delta mp}2^{-n\delta p}+2^{-\delta mp}2^{-n\delta p})\\
&\leqslant&C2^{-n\delta p}.
\end{eqnarray*}
Applying the same procedure as used as above, we can get the first conclusion (\ref{e1f}), and we thus complete the proof.
\end{proof}

Combining this with Theorem \ref{comparison theorem}, we can obtain the convergence rate of $\xi_2(x,T,n)$. In order to get this result, we shall make use the following assumption (A.IV).
\begin{itemize}
\item[(A.IV)~]
$\sigma:[0,T]\times C([0,T];{\mathbb R}^d)\rightarrow{\mathbb R}^d\otimes{\mathbb R}^m$ satisfies the condition $(A_{2+\delta,{\mathbb R}^d,{\mathbb R}^d\otimes{\mathbb R}^m})$ and $b:[0,T]\times C([0,T];{\mathbb R}^d)\rightarrow{\mathbb R}^d$ satisfies the condition $(A_{2+\delta,{\mathbb R}^d,{\mathbb R}^d})$.
\end{itemize}
\begin{theorem}\label{main theorem}
Let $k_n=2^{2n}$. Suppose that the coefficients $(\sigma,b)$ of SDE (\ref{SDE2}) satisfy (A.II) and (A.IV). If $0<\beta<\alpha\wedge(1+\delta)-1-d/q$, $1/p+1/q=1$,
$G\in{\mathbb D}_{\alpha}^q$ and $t\in [0,T]$, then we have $p_{X_{k_n}(t),G}\in C^{\beta}({\mathbb R}^d)$, $p_{X(t),G}\in C^{\beta}({\mathbb R}^d)$ and
\begin{eqnarray}
\|p_{X_{k_n}(t),G}-p_{X(t),G}\|_{C^\beta({\mathbb R}^d)}=O(2^{-n\theta}),
\end{eqnarray}
where $p_{X_{k_n}(t),G}$ and $p_{X(t),G}$ are defined by (\ref{density}), i.e.,
\begin{eqnarray*}
p_{X_{k_n}(t),G}(y)=E[G\cdot\delta_y\circ X_{k_n}(t)]=E(G|X_{k_n}(t)=y)p_{X_{k_n}(t)}(y)
\end{eqnarray*}
and
\begin{eqnarray*}
p_{X(t),G}(y)=E[G\cdot\delta_y\circ X(t)]=E(G|X(t)=y)p_{X(t)}(y).
\end{eqnarray*}
In particular, taking $G=\textbf{1}\in{\mathbb D}_{\infty}^{\infty-}$, we conclude that $p_{X_{k_n}(t)}\in C^{\beta}({\mathbb R}^d)$,
$p_{X(t)}\in C^{\beta}({\mathbb R}^d)$ and
\begin{eqnarray}\label{main result}
\|p_{X_{k_n}(t)}-p_{X(t)}\|_{C^\beta({\mathbb R}^d)}=O(2^{-n\theta}).
\end{eqnarray}
Furthermore, if $G_n$, $n=1,2,\cdots$ and $G$ are in ${\mathbb D}^q_{\alpha}$ and
\begin{eqnarray*}
\|G_n-G\|_{\alpha,q}=O(2^{-n\lambda}),
\end{eqnarray*}
then we have
\begin{eqnarray}
\|p_{X_{k_n}(t),G_n}-p_{X(t),G}\|_{C^\beta({\mathbb R}^d)}=O(2^{-n(\lambda\wedge\theta)}).
\end{eqnarray}
\end{theorem}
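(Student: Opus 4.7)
The plan is to reduce the $C^\beta$-estimate for the density difference to a $-\alpha$-norm estimate for the difference of the corresponding Donsker's delta functionals, and then to invoke Theorem~\ref{comparison theorem} together with the strong approximation result Theorem~\ref{eulerschemefractionalregularity}. First I would use that $(1-\Delta)^{-\beta/2}:C^0({\mathbb R}^d)\to C^\beta({\mathbb R}^d)$ is bounded, so it suffices to estimate $\sup_y|(1-\Delta)^{\beta/2}(p_{X_{k_n}(t),G}-p_{X(t),G})(y)|$. Using $p_{F,G}(y)=E[G\cdot\delta_y\circ F]$ and duality between ${\mathbb D}_\alpha^q$ and ${\mathbb D}_{-\alpha}^p$ ($1/p+1/q=1$), this supremum is dominated by $\|G\|_{\alpha,q}\cdot\sup_y\|(1-\Delta)^{\beta/2}\delta_y\circ X_{k_n}(t)-(1-\Delta)^{\beta/2}\delta_y\circ X(t)\|_{-\alpha,p}$.

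Next I apply Theorem~\ref{comparison theorem} to the last supremum with $F_1=X_{k_n}(t)$, $F_2=X(t)$ and the trivial localizers $H=H_1=H_2\equiv 1$ (so $A=\emptyset$): condition (i) and the nondegeneracy in (iii) follow from (A.IV) and the natural higher-order extensions of Theorem~\ref{eulerscheme}, while condition (ii) is automatic since $\ln 1=0$. This yields
\[
\sup_y\|(1-\Delta)^{\beta/2}\delta_y\circ X_{k_n}(t)-(1-\Delta)^{\beta/2}\delta_y\circ X(t)\|_{-\alpha,p}\leq C\bigl(\|X_{k_n}(t)-X(t)\|_{2+\delta',r_1}+\|X_{k_n}(t)-X(t)\|_{1,r_2}+\|X_{k_n}(t)-X(t)\|_{\delta',r_3}\bigr),
\]
where the admissible parameters $\alpha,\beta,\delta',p,p',p'',r_1,r_2,r_3$ are chosen to satisfy the constraints of Theorem~\ref{comparison theorem}, and the constant $C$ depends on $F_1,F_2$ only through Sobolev norms and Malliavin non-degeneracy quantities bounded uniformly in $n$ by (\ref{e1}), (\ref{e3}), (\ref{e4}) and (\ref{e1f}). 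Each norm on the right is then $O(2^{-n\theta})$ by Theorem~\ref{eulerschemefractionalregularity} (invoked at fractional orders $2+\delta'$, $1$, and $\delta'$, noting that the Sobolev norm at a fixed time $t$ is dominated by the ${\mathcal G}$-path norm), which establishes (\ref{main result}). The special case $G=\textbf{1}$ then gives the bound on $\|p_{X_{k_n}(t)}-p_{X(t)}\|_{C^\beta({\mathbb R}^d)}$.

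For the refinement involving $G_n\to G$, I would decompose $p_{X_{k_n}(t),G_n}-p_{X(t),G}=p_{X_{k_n}(t),G_n-G}+(p_{X_{k_n}(t),G}-p_{X(t),G})$. The second summand is $O(2^{-n\theta})$ by what has just been shown. For the first, the same duality argument yields $\|p_{X_{k_n}(t),G_n-G}\|_{C^\beta({\mathbb R}^d)}\leq C\|G_n-G\|_{\alpha,q}\cdot\sup_y\|(1-\Delta)^{\beta/2}\delta_y\circ X_{k_n}(t)\|_{-\alpha,p}$, with the last supremum bounded uniformly in $n$ by Lemma~\ref{local version2} (taking the localizer to be $1$) together with Lemma~\ref{w1}. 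Hence this summand is $O(2^{-n\lambda})$, and the triangle inequality produces the asserted $O(2^{-n(\lambda\wedge\theta)})$. The main obstacle is the uniformity in $n$ of the constants appearing throughout; this reduces to uniform Sobolev bounds $\sup_n\|X_{k_n}(t)\|_{2+\delta',r}<\infty$ at the differential order required by Theorem~\ref{comparison theorem}, and to uniform Malliavin non-degeneracy $\sup_n\|(\det\Sigma_{X_{k_n}(t)})^{-1}\|_p<\infty$, both of which follow from (A.II), (A.IV) and the higher-order analogues of the estimates underlying Theorem~\ref{eulerscheme} and Theorem~\ref{eulerschemefractionalregularity}.
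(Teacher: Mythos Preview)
Your proposal is correct and follows essentially the same approach as the paper. The paper's own proof consists of the single sentence ``One only needs to simply combine Theorem~\ref{eulerschemefractionalregularity} and Theorem~\ref{comparison theorem}''; you have spelled out this combination in detail, including the duality reduction to the $-\alpha$-norm (which the paper records separately in Corollary~4.3), the choice of trivial localizers $H=H_1=H_2\equiv 1$, the uniformity in $n$ of the constants coming from Theorem~\ref{comparison theorem}, and the decomposition $p_{X_{k_n}(t),G_n}-p_{X(t),G}=p_{X_{k_n}(t),G_n-G}+(p_{X_{k_n}(t),G}-p_{X(t),G})$ for the final refinement.
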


\begin{proof}
One only needs to simply combine Theorem \ref{eulerschemefractionalregularity} and Theorem \ref{comparison theorem}.
\end{proof}

\begin{remark}
It is worth noting that for some special non-Markovian stochastic differential equations such as stochastic differential delay equations the convergence rate of the density can be improved to a better result. Indeed, Cl\'{e}ment, Kohatsu-Higa and Lamberton \cite{CK-HL} have obtained the convergence rate of the density for some stochastic differential delay equations is $1/n$.
\end{remark}

\begin{Examples}
Consider the following stochastic delay differential equations
\begin{eqnarray*}
x(t)=x+\int_0^tb(x(s-\tau),x(s))ds+\int_0^t\sigma(x(s-\tau),x(s))dW(s),
\end{eqnarray*}
where the coefficients $b$ and $\sigma$ are mapping from ${\mathbb R}^2\rightarrow{\mathbb R}$. Let $T>0$ be a fixed time horizon, and $T/n$ represent the discretization step. Set $x_n(0)=x$, and for $kT/n<t\leqslant(k+1)T/n$, the Euler scheme is defined by
\begin{eqnarray*}
x_n(t)=x_n(\frac{kT}{n})+\int_{\frac{kT}{n}}^tb(x_n(\frac{kT}{n}-\tau),x_n(\frac{kT}{n}))ds+\int_{\frac{kT}{n}}^t\sigma(x_n(\frac{kT}{n}-\tau),x_n(\frac{kT}{n}))dW(s).
\end{eqnarray*}
Let $\delta=k+\theta$, $k\in{\mathbb N}$, $0<\theta\leqslant 1$ and $p>1$. Suppose that the coefficients $(b(x),\sigma(x))$ satisfy the following assumptions: $b,\sigma\in(C_b^{2+k}({\mathbb R}^2;{\mathbb R}),C_b^{3+k}({\mathbb R}^2;{\mathbb R}))_{\theta,p}$; $\sigma$ is bounded and uniformly nondegenerate. Then we can use Theorem \ref{main theorem} to obtain the convergence rate of density of the Euler scheme for stochastic delay differential equations. Similar results holds of course for more general delay equations.
\end{Examples}

\begin{remark}
Finally we point out that on a finite dimensional Euclidean space, say ${\mathbb R}^m$, the space $(C^{k}({\mathbb R}^m), C^{k+1}({\mathbb R}^m))_{\theta,p}$ is very close to $C^{k+\theta}({\mathbb R}^m)$. In fact, it is known that (see \cite{T}) that for every $\epsilon>0$,
\begin{eqnarray*}
C^{k+\theta}({\mathbb R}^m)&=&(C^k({\mathbb R}^m),C^{k+1}({\mathbb R}^m))_{\theta,\infty}\subset (C^k({\mathbb R}^m),C^{k+1}({\mathbb R}^m))_{\theta-\epsilon,p}\\
&&\subset (C^k({\mathbb R}^m),C^{k+1}({\mathbb R}^m))_{\theta-\epsilon,\infty}=C^{k+\theta-\epsilon}({\mathbb R}^m).
\end{eqnarray*}
On an infinite dimensional Banach space $E$, this full chain of inclusions does not hold in general, but we still have the following
\begin{eqnarray*}
(C^k(E),C^{k+1}(E))_{\theta,p}\subset C^{k+\theta}(E),\quad\forall p\in [1,\infty).
\end{eqnarray*}
Therefore the above example shows that we can obtain the convergence rate of density of the Euler scheme for stochastic delay differential equations when the coefficients are in  H\"{o}lder spaces.
\end{remark}

\vskip 1cm

{\bf Acknowledgement}
We thank heartily Professor V. Bally for informing us his recent works and, in particular, for sending us the reference \cite{BC2}, which has helped us to improve greatly the quality of the paper. Also we are very grateful to the anonymous referees and the associated editor for their careful reading of the manuscript and for valuable suggestions and criticisms.

\vskip 1cm

\vspace{5mm}

\end{document}